\numberwithin{equation}{section}
\def\today{\number\day\space\ifcase\month\or   January\or February\or
   March\or April\or May\or June\or   July\or August\or September\or
   October\or November\or December\fi\   \number\year}
\theoremstyle{definition}
\newtheorem{thm}{Theorem}[section]
\newtheorem{lem}[thm]{Lemma}
\newtheorem{prp}[thm]{Proposition}
\newtheorem{dfn}[thm]{Definition}
\newtheorem{cor}[thm]{Corollary}
\newtheorem{rmk}[thm]{Remark}
\newtheorem{ntn}[thm]{Notation}
\newtheorem{exa}[thm]{Example}
\newtheorem{qst}[thm]{Question}
\newcommand{\beq}{\begin{equation}}
\newcommand{\eeq}{\end{equation}}
\newcommand{\beqr}{\begin{eqnarray*}}
\newcommand{\eeqr}{\end{eqnarray*}}
\newcommand{\bal}{\begin{align*}}
\newcommand{\eal}{\end{align*}}
\newcommand{\bei}{\begin{itemize}}
\newcommand{\eei}{\end{itemize}}
\newcommand{\af}{\alpha}
\newcommand{\bt}{\beta}
\newcommand{\gm}{\gamma}
\newcommand{\dt}{\delta}
\newcommand{\zt}{\zeta}
\newcommand{\et}{\eta}
\newcommand{\ld}{\lambda}
\newcommand{\sm}{\sigma}
\newcommand{\kp}{\kappa}
\newcommand{\ph}{\varphi}
\newcommand{\ps}{\psi}
\newcommand{\rh}{\rho}
\newcommand{\ta}{\tau}
\newcommand{\Ld}{\Lambda}
\newcommand{\Q}{{\mathbb{Q}}}
\newcommand{\Z}{{\mathbb{Z}}}
\newcommand{\C}{{\mathbb{C}}}
\newcommand{\N}{{\mathbb{Z}}_{> 0}}
\newcommand{\diag}{{\mathrm{diag}}}
\newcommand{\supp}{{\mathrm{supp}}}
\newcommand{\invlim}{\varprojlim}
\newcommand{\andeqn}{\,\,\,\,\,\, {\mbox{and}} \,\,\,\,\,\,}
\newcommand{\Wolog}{Without loss of generality}
\newcommand{\tfae}{the following are equivalent}
\newcommand{\ifo}{if and only if}
\newcommand{\ca}{C*-algebra}
\newcommand{\hm}{homomorphism}
\newcommand{\fd}{finite dimensional}
\newcommand{\pca}{pro-\ca}
\newcommand{\pfpca}{profinite pro-\ca}
\newcommand{\pfg}{profinite group}
\newcommand{\pfcmp}{profinite completion}
\newcommand{\rfd}{residually \fd}
\newcommand{\rf}{residually finite}
\newcommand{\csn}{C*~seminorm}
\newcommand{\pcas}{pro-\ca\  structure}
\newcommand{\pfpcas}{profinite pro-\ca\  structure}
\newcommand{\rpn}{representation}
\newcommand{\ct}{continuous}
\newcommand{\cfn}{continuous function}
\newcommand{\hme}{homeomorphism}
\renewcommand{\S}{\subset}
\newcommand{\ov}{\overline}
\newcommand{\wt}{\widetilde}
\newcommand{\wh}{\widehat}
\newcommand{\SM}{\setminus}
\newcommand{\I}{\infty}
\newcommand{\CNG}{{\mathcal{N}}_G}
\newcommand{\CNGb}{{\mathcal{N}}_{\overline{G}}}
\newcommand{\Lem}[1]{Lemma~\ref{#1}}
\newcommand{\Def}[1]{Definition~\ref{#1}}
\title[Pro-C*-algebras of profinite groups]{Profinite
  pro-C*-algebras and pro-C*-algebras of
  profinite groups}
\author{Rachid El Harti}
\address{Department of Mathematics and Computer Sciences,
Faculty of Sciences and Techniques, University Hassan I,
BP 577.\  Settat, Morocco}
\email[]{relharti@gmail.com}
\author{N.~Christopher Phillips}
\address{Department of Mathematics, University  of Oregon,
       Eugene OR 97403-1222, USA.}
\email[]{ncp@darkwing.uoregon.edu}
\author{Paulo R.\  Pinto}
\address{Department of Mathematics,
Instituto Superior T\'{e}cnico,
Technical University of Lisbon,
Av.\  Rovisco Pais, 1049-001 Lisboa, Portugal}
\email[]{ppinto@math.ist.utl.pt}
\date{20~April 2012}
\subjclass[2000]{Primary 22D25, 46K05, 46K10, 46M40;
 Secondary 22D10, 22D15, 46L05.}
\thanks{Some of this material is based upon work of the second
   author supported by the
   US National Science Foundation under Grant DMS-0701076.
   The second author was also partially supported
   by the Centre de Recerca Matem\`{a}tica (Barcelona)
   through a research visit conducted during 2011.
   The third author was partially supported by
   the Funda\c{c}\~{a}o para a Ci\^{e}ncia e a Tecnologia
   through the Program POCI 2010/FEDER}
\begin{document}

\begin{abstract}
We define the profinite completion of a C*-algebra,
which is a pro-C*-algebra,
as well as the pro-C*-algebra of a profinite group.
We show that the continuous representations
of the pro-C*-algebra of a profinite group
correspond to the unitary representations of the group
which factor through a finite group.
We define natural homomorphisms from the C*-algebra of a locally
compact group and its profinite completion to the pro-C*-algebra
of the profinite completion of the group.
We give some conditions for
injectivity or surjectivity of these homomorphisms,
but an important question remains open.
\end{abstract}

\maketitle

\indent
The \ca\  $C^* (G)$ of a locally compact group~$G$
is a well known construction.
One of the main motivations for it is the correspondence
between unitary representations of~$G$
and \ca\  representations of $C^* (G).$

A profinite group is an inverse limit $\invlim_{\ld \in \Ld} G_{\ld}$
of finite groups~$G_{\ld},$
with $G_{\ld}$ having the discrete topology for each~$\ld.$
Such a group is compact,
and therefore has a conventional \ca.
However, we can also form the inverse limit of the \ca s
$C^* (G_{\ld}).$
The result is a \pca,
which we introduce and study in this paper.
Its representation theory is related to a part of
the representation theory of~$G,$
namely the unitary representations which factor through finite groups.
This is the part of
the representation theory of~$G$
which should be thought of as being compatible with the structure
of~$G$ as a profinite group.

In Section~\ref{Sec:Pfcas},
we present some relevant basic facts about \pca s.
The \pca\  of a \pfg\  will be an example of a \pfpca,
so we develop the theory of these,
and of the \pfcmp\  of a \ca.
Generally this completion is far too large to be of any interest,
but it seems to be useful in connection with the \ca s of \pfg s.

Section~\ref{Sec:Pfg} contains the definition
and basic properties of the \pca\  of a \pfg.
We give here the relation between the \rpn s of the group
and its \pca.

In Section~\ref{Sec:Rel},
for a locally compact group~$G,$
we use a \hm\  from $C^* (G)$
to the \pca\  of the \pfcmp\  ${\ov{G}}$ of~$G$
to study the relation between $C^* (G)$ and its \pfcmp\  on the
one hand,
and the \pca\  of ${\ov{G}}$ on the other hand.
One can't expect a tight relationship in general.
We do prove that the map from $C^* (G)$ is injective
for a countable amenable \rf\  group
(Theorem~\ref{T:RFAmen}).
However, there are \rf\  groups whose \ca s are not \rfd;
for these, the map from $C^* (G)$ is never injective.
We do not know what happens when the
group is \rf\  but not amenable and its
C*-algebra is \rfd.
The map from the \pfcmp\  is always surjective
(Corollary~\ref{P:phBarSj}).
For a compact group,
it is an isomorphism \ifo\  the group is profinite
(Proposition~\ref{P:CompProf}),
but can be an isomorphism for noncompact groups,
sometimes for trivial reasons
(Example~\ref{E-DirSumZ2} and Example~\ref{E-InfAltGp}).

\section{Profinite pro-C*-algebras}\label{Sec:Pfcas}

\indent
In this section, we define \pca s,
\pfpca s,
and the \pfcmp\  of a (\rfd) \ca.
As will become clear,
the \pfcmp\  is usually very large.
(See Example~\ref{E:CXPfCmp}.)
In the next section,
we will see situations in which
either the \pfcmp\  is not quite so large,
or there is a completion which is smaller and more appropriate
for the situation.

We approximately follow the definition of a \pca\  of~1.2 of~\cite{Vc}.
The terminology in the literature is inconsistent;
in~\cite{Ph1} and elsewhere,
the term ``\pca'' is used for what we call here
the completion of a \pca.

\begin{dfn}\label{D:CSN}
Let $A$ be a complex *-algebra.
A {\emph{C*~seminorm}} on~$A$ is an algebra seminorm~$p$ on~$A$
which satisfies $p (a^* a) = p (a)^2$ for all $a \in A.$
If $p$ is a \csn,
we denote by $\ker (p)$ the set
\[
\ker (p) = \{ a \in A \colon p (a) = 0 \}.
\]
\end{dfn}

\begin{lem}\label{L-PropOfSeminm}
Let $A$ be a complex *-algebra and let $p$ be a \csn\  on~$A.$
Then:
\begin{enumerate}
\item\label{L-PropOfSeminm-1}
$\ker (p)$ is a *-ideal in~$A,$
closed if $A$ has a topology and $p$ is \ct.
\item\label{L-PropOfSeminm-2}
$p$~induces a norm on $A / \ker (p),$
with respect to which $A / \ker (p)$ satisfies all the axioms
for a \ca\  with the possible exception of completeness.
\item\label{L-PropOfSeminm-3}
If $A$ is a \ca,
then $p$ is automatically continuous;
in fact,  $p (a) \leq \| a \|$ for all $a \in A.$
Moreover, $A / \ker (p)$ is already complete.
\end{enumerate}
\end{lem}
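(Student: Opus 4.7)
For part~\ref{L-PropOfSeminm-1}, I would first observe that the C*~identity combined with submultiplicativity of~$p$ forces $p (a^*) = p (a)$: indeed $p (a^*)^2 = p (a a^*) \leq p (a) \, p (a^*)$ gives $p (a^*) \leq p (a),$ and symmetry gives the reverse. Hence $\ker (p)$ is $*$-closed. It is a two-sided ideal because $p (ab) \leq p (a) \, p (b),$ and subadditivity and homogeneity make it a linear subspace. Continuity of~$p$ gives closedness via $\ker (p) = p^{-1} (\{ 0 \}).$

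For part~\ref{L-PropOfSeminm-2}, I set $\| a + \ker (p) \| := p (a);$ this is well defined since $| p (a) - p (a') | \leq p (a - a'),$ and is a norm (rather than merely a seminorm) precisely because we have quotiented by the kernel. The involution and multiplication descend from~$A,$ and submultiplicativity and the C*~identity on $A / \ker (p)$ follow from the corresponding properties of~$p.$ Thus $A / \ker (p)$ satisfies all the \ca\  axioms except possibly completeness.

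For part~\ref{L-PropOfSeminm-3}, the main task is the estimate $p (a) \leq \| a \|.$ Since $p (a)^2 = p (a^* a) \leq \| a^* a \| = \| a \|^2$ once the estimate is known for self-adjoint elements, it suffices to treat $a = a^*.$ After passing to unitizations and extending~$p$ in the standard way, let $B$ be the completion of the pre-\ca\  constructed in~\ref{L-PropOfSeminm-2} and let $\pi \colon A \to B$ be the natural unital $*$-\hm. Spectra shrink under unital $*$-\hm s of unital Banach $*$-algebras, so $\spec_B (\pi (a)) \S \spec_A (a),$ whence
\[
p (a) \ = \ \| \pi (a) \|_B \ = \ r_B (\pi (a)) \ \leq \ r_A (a) \ = \ \| a \|,
\]
with $r$ denoting the spectral radius and the outer equalities using that the norm of a self-adjoint element of a \ca\  equals its spectral radius. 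For the completeness assertion, the continuity just established makes $\ker (p)$ closed, so $A / \ker (p)$ also carries the quotient \ca\  norm $\| \cdot \|_q.$ The identity map $(A / \ker (p), \| \cdot \|_q) \to B$ is an injective $*$-\hm\  between \ca s, hence isometric, forcing $\| \cdot \|_q = p (\cdot)$ on $A / \ker (p);$ in particular $A / \ker (p)$ is already complete under the norm induced by~$p.$

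I expect the main obstacle to be the spectral step in~\ref{L-PropOfSeminm-3}: one must handle non-unital~$A$ (by extending~$p$ to a \csn\  on the unitization and arranging $\pi$ to be unital so that the spectrum-shrinking inclusion applies without an added~$\{ 0 \}$), while everything else is routine bookkeeping with the \ca\  axioms.
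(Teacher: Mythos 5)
Your proposal is correct, and parts (1) and (2) match what the paper leaves as ``immediate.'' For part (3) you take a genuinely more self-contained route: you prove the key estimate $p(a) \leq \|a\|$ from first principles by reducing to self-adjoint elements via $p(a)^2 = p(a^*a) \leq \|a^*a\|$, unitizing, and using spectral permanence ($\spec_B(\pi(a)) \S \spec_A(a)$ for a unital \hm) together with the identification of norm and spectral radius for self-adjoint elements. The paper instead forms the auxiliary C*~norm $q(a) = \max(p(a), \|a\|)$ and simply cites the standard fact that a *-\hm\ from a \ca\ into a (pre-)\ca\ is contractive, applied to the map $A \to \ov{A / \ker(q)}$; your argument is essentially an unpacking of the proof of that standard fact, so the two approaches rest on the same mathematics, with the paper's being a two-line appeal to a black box and yours being verifiable without it. For completeness of $A / \ker(p)$ the routes also differ slightly but both work: the paper observes that $A/\ker(p)$ is the range of a \hm\ of \ca s (hence closed in its completion), while you compare $p$ with the quotient C*~norm on $A/\ker(\rsz{p})$ and invoke isometry of injective *-\hm s; your version does require first noting that $(A/\ker(p), \|\cdot\|_q)$ is itself a \ca, which holds since $\ker(p)$ is closed by the continuity you just established. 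Both arguments are sound.
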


\begin{proof}
The first two parts are immediate.
For the third,
set $q (a) = \max (p (a), \, \| a \| )$ for $a \in A.$
Then $q$ is a C*~norm on~$A.$
The obvious map
$A \to {\overline{A / \ker (q)}}$ is algebraically a *-\hm,
hence contractive.
Therefore $p (a) \leq \| a \|$ for all $a \in A.$
Completeness of $A / \ker (p)$
now follows from the fact that
it is the range of a \hm\  of \ca s.
\end{proof}

Lemma~\ref{L-PropOfSeminm}(\ref{L-PropOfSeminm-3})
should be compared with Corollary~5.4 of~\cite{Schm}.
However, $A$~might be smaller than
the algebra considered in~\cite{Schm}.

\begin{ntn}\label{N-CSN}
Let $A$ be a complex *-algebra.
If $p$ is a \csn\  on~$A,$
we denote by $A_p$ or ${\overline{A / \ker (p)}}$
the completion of $A / \ker (p).$
If we have a family $(p_{\ld})_{\ld \in \Ld}$
of \csn s on~$A,$
we abbreviate $A_{p_{\ld}}$ to~$A_{\ld}.$
\end{ntn}

\begin{dfn}\label{D:PCA}
A {\emph{pro-C*-algebra}} is a pair
$\big( A, (p_{\ld})_{\ld \in \Ld} \big)$
consisting of a \ca\  $A$ and a family $(p_{\ld})_{\ld \in \Ld}$
of \csn s on~$A,$
indexed by a directed set~$\Ld,$
such that $\ld \leq \mu$ implies $p_{\ld} \leq p_{\mu}.$
We also refer to $(p_{\ld})_{\ld \in \Ld}$ as a
{\emph{pro-C*-algebra structure on~$A.$}}
\end{dfn}

We have omitted two of the three conditions in~1.2 of~\cite{Vc},
because they are not satisfied in many of our examples.
We give these conditions
% in \Def{D:Full} below.
in the next definition.

\begin{dfn}\label{D:Full}
Let $A$ be a \ca,
and let $(p_{\ld})_{\ld \in \Ld}$
be a pro-C*-algebra structure on~$A.$
\begin{enumerate}
\item\label{D-Full-1}
We say that $(p_{\ld})_{\ld \in \Ld}$
is {\emph{full}} if the following condition is satisfied.
Whenever $(a_{\ld})_{\ld \in \Ld}$ is a family of
elements $a_{\ld} \in A_{\ld}$ which is consistent
in the sense that for $\ld \geq \mu,$ the image of $a_{\ld}$
in $A_{\mu}$ is $a_{\mu},$
and such that we also have $\| a_{\ld} \| \leq 1$ for all $\ld \in \Ld,$
then there exists $a \in A$ such that $\| a \| \leq 1$ and such that
$a + \ker ( p_{\ld} ) = a_{\ld}$ for all $\ld \in \Ld.$
\item\label{D-Faithful}
We say that $(p_{\ld})_{\ld \in \Ld}$
is {\emph{faithful}} if
$\| a \| = \sup_{\ld \in \Ld} p_{\ld} (a)$ for all $a \in A.$
\end{enumerate}
\end{dfn}

The condition of Definition~\ref{D:Full}(\ref{D-Full-1})
asserts that the inverse limit of the
closed unit balls of the quotients $A_{\ld}$
is the closed unit ball of~$A.$

Lemma~\ref{L-PropOfSeminm}(\ref{L-PropOfSeminm-3})
implies that one inequality
in Definition~\ref{D:Full}(\ref{D-Faithful})
is automatic:
we always have $\| a \| \geq \sup_{\ld \in \Ld} p_{\ld} (a).$

\begin{rmk}\label{R:Topology}
Let $A$ be a \ca, and let $(p_{\ld})_{\ld \in \Ld}$ be a
pro-C*-algebra structure on~$A.$
Then $(p_{\ld})_{\ld \in \Ld}$ defines a topology on~$A,$
in which a net $(a_{\af})_{\af \in I}$ in~$A$ converges to $a \in A$
\ifo\  $p_{\ld} (a_{\af} - a) \to 0$ for all $\ld \in \Ld.$
When $(p_{\ld})_{\ld \in \Ld}$ is understood,
we write ${\ov{A}}$ for the completion of~$A.$
This algebra is an inverse limit of \ca s
(namely, the \ca s $A / \ker (p_{\ld})$),
or a \pca\  in the sense of Definition~1.1 of~\cite{Ph1}.

The \pcas\   $(p_{\ld})_{\ld \in \Ld}$
is faithful \ifo\  the map $A \to {\ov{A}}$ is injective,
equivalently,
\ifo\  $\bigcap_{\ld \in \Ld} \ker (p_{\ld}) = \{ 0 \}.$
The \pcas\   $(p_{\ld})_{\ld \in \Ld}$
is full \ifo\  the map $A \to {\ov{A}}$ has range equal
to the \ca\  of bounded elements of ${\ov{A}}$
in the sense of Definition~1.10 of~\cite{Ph1}.
\end{rmk}

See~\cite{Ph1} and the references there for more on
the theory of inverse limits of \ca s.

\begin{dfn}\label{D:EqStruct}
Let $A$ be a \ca.
Two pro-C*-algebra structures on~$A$ are said to be
{\emph{equivalent}} if the topologies they define,
as in Remark~\ref{R:Topology},
are equal.
\end{dfn}

\begin{dfn}\label{D-PfcmpFth}
A \ca~$A$ is called {\emph{residually finite dimensional}}
if it has a faithful family of \fd\  representations.
\end{dfn}

\begin{dfn}\label{D:Pfpca}
A \pca\  $\big( A, (p_{\ld})_{\ld \in \Ld} \big)$
is {\emph{profinite}}
if $A / \ker (p_{\ld})$ is \fd\  for all $\ld \in \Ld.$
If $A$ is any \ca,
we define its {\emph{profinite pro-C*-algebra structure}}
to be the collection of all \csn s $p$ on~$A$
such that $A / \ker (p)$ is \fd\  %
(justification in Lemma~\ref{R:PfcmpIsPca} below),
and we define the {\emph{profinite completion}} of $A$ to be
$A$ equipped with this \pcas.
\end{dfn}

The \csn s in the second part of \Def{D:Pfpca} are \ct,
by Lemma~\ref{L-PropOfSeminm}(\ref{L-PropOfSeminm-3}).

\begin{lem}\label{R:PfcmpIsPca}
Let $A$ be a \ca.
Then:
\begin{enumerate}
\item\label{R:PfcmpIsPca-1}
The collection of all \csn s $p$ on~$A$
such that $A / \ker (p)$ is \fd\  is a \pca\  structure.
\item\label{R:PfcmpIsPca-2}
The profinite pro-C*-algebra structure of
Definition~\ref{D:Pfpca} is profinite.
\item\label{R:PfcmpIsPca-3}
The profinite pro-C*-algebra structure of
Definition~\ref{D:Pfpca} is faithful \ifo\  $A$ is \rfd.
\end{enumerate}
\end{lem}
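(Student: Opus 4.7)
Plan. For part~(\ref{R:PfcmpIsPca-1}), I would take $\Ld$ to be the set of all \csn s $p$ on~$A$ with $A / \ker (p)$ \fd, partially ordered by pointwise inequality, indexed by itself via $p_{\ld} = \ld$; the monotonicity condition in \Def{D:PCA} then holds tautologically, so the only content is that $\Ld$ is directed. Given $p, q \in \Ld,$ each of the \fd\ \ca s $A / \ker (p)$ and $A / \ker (q)$ embeds in a matrix algebra, so composing the respective quotient maps with these embeddings gives \fd\ \rpn s $\pi$ and $\rh$ of $A$ with $\| \pi (a) \| = p (a)$ and $\| \rh (a) \| = q (a).$ The direct sum $\pi \oplus \rh$ then induces a \csn\ $r$ on $A$ satisfying $r (a) = \max (p (a), q (a))$ and $\ker (r) = \ker (p) \cap \ker (q),$ so $A / \ker (r)$ embeds in a matrix algebra and is \fd. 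Hence $r \in \Ld$ with $r \geq p$ and $r \geq q.$

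Part~(\ref{R:PfcmpIsPca-2}) is immediate from the definition of~$\Ld.$

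For part~(\ref{R:PfcmpIsPca-3}), each $p \in \Ld$ arises as in the previous paragraph from some \fd\ \rpn\ $\pi_p$ of~$A$ with $p (a) = \| \pi_p (a) \|.$ If $A$ is \rfd, fix a separating family $(\pi_{\af})_{\af \in I}$ of \fd\ \rpn s of~$A.$ The product map $\prod_{\af \in I} \pi_{\af}$ is a well defined *-\hm\ of \ca s (each $\pi_{\af}$ being contractive), and is injective because the family separates points, hence is isometric. Therefore $\| a \| = \sup_{\af \in I} \| \pi_{\af} (a) \|;$ since each $\| \pi_{\af} (\cdot) \|$ lies in~$\Ld,$ together with \Lem{L-PropOfSeminm}(\ref{L-PropOfSeminm-3}) this yields $\| a \| = \sup_{p \in \Ld} p (a),$ so the structure is faithful. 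Conversely, if the structure is faithful and $a \neq 0,$ some $p \in \Ld$ satisfies $p (a) > 0,$ so $\pi_p (a) \neq 0$; hence $\{ \pi_p : p \in \Ld \}$ is a separating family of \fd\ \rpn s of~$A,$ witnessing that $A$ is \rfd.

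The only nontrivial point is the directedness check in part~(\ref{R:PfcmpIsPca-1}); the remainder follows from unwinding definitions and invoking the standard fact that an injective *-\hm\ between \ca s is isometric.
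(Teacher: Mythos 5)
Your proof is correct, and it supplies exactly the details the paper omits: the published proof of this lemma is just the one line ``All parts of the lemma are easy.'' The one genuinely nontrivial point, directedness in part~(\ref{R:PfcmpIsPca-1}), is handled by the standard device of realizing each seminorm as $a \mapsto \| \pi (a) \|$ for a \fd\ \rpn\ $\pi$ and taking direct sums, which is surely the argument the authors had in mind.
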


\begin{proof}
All parts of the lemma are easy.
\end{proof}

We will not use this fact,
but we point out that Theorem~6.1 of~\cite{Schm}
shows that a \pca\  $\big( A, (p_{\ld})_{\ld \in \Ld} \big)$
is profinite \ifo\  ${\overline{A}}$ is semireflexive as a
topological vector space,
that is (see Sections 5.3 and~5.4 in Chapter~4 of~\cite{Schf})
the map from ${\overline{A}}$ to its strong second dual
(as a topological vector space) is bijective
(but not necessarily a homeomorphism).

Commutative \ca s are \rfd,
as is $C_0 (X, M_n)$ for any locally compact Hausdorff space~$X$
and any $n \in \N.$
Theorem~7 of~\cite{Ch} shows that the full \ca\  of the free
group on two generators is \rfd.

\begin{prp}\label{P:PFIsRFD}
Let $\big( A, (p_{\ld})_{\ld \in \Ld} \big)$ be a profinite \pca.
Set $I = \bigcap_{\ld \in \Ld} \ker (p_{\ld}).$
Then $A / I$ is \rfd.
\end{prp}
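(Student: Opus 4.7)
The plan is to build, for each index~$\ld,$ a \fd\ representation of $A/I$ whose kernel corresponds to $\ker (p_\ld),$ and then observe that the collection of these representations separates points of $A/I$ precisely because $I = \bigcap_{\ld} \ker (p_\ld).$

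First, I would fix $\ld \in \Ld$ and analyze the quotient $A / \ker (p_\ld).$ By Lemma~\ref{L-PropOfSeminm}(\ref{L-PropOfSeminm-2}), it satisfies all the \ca\  axioms except possibly completeness; since it is \fd\  by the profinite hypothesis, completeness is automatic, so $A / \ker (p_\ld)$ is a \fd\  \ca. Every such algebra is isomorphic to a finite direct sum of full matrix algebras and hence admits a faithful \fd\  \rpn\  $\sm_\ld$ (for instance, the direct sum of the identity \rpn s of its matrix summands).

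Next, I would compose $\sm_\ld$ with the quotient map $A \to A / \ker (p_\ld)$ to obtain a \fd\  \rpn\  $\pi_\ld$ of~$A$ with $\ker (\pi_\ld) = \ker (p_\ld).$ Since $I \S \ker (p_\ld),$ each $\pi_\ld$ factors through a \fd\  \rpn\  ${\ov{\pi}}_\ld$ of $A / I.$

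Finally, I would verify that $\{ {\ov{\pi}}_\ld \}_{\ld \in \Ld}$ is a faithful family on $A/I.$ For $a \in A,$ the coset $a + I$ lies in $\ker ({\ov{\pi}}_\ld)$ \ifo\  $a \in \ker (p_\ld),$ so the intersection of the kernels of the ${\ov{\pi}}_\ld$ equals $\big( \bigcap_{\ld \in \Ld} \ker (p_\ld) \big) / I = I / I = 0.$ Thus $A/I$ is \rfd\  in the sense of Definition~\ref{D-PfcmpFth}. There is no real obstacle here: the result is essentially a bookkeeping consequence of the definitions, together with the fact that every \fd\  \ca\  is (trivially) \rfd.
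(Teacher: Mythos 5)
Your proposal is correct and follows essentially the same route as the paper: compose a faithful finite dimensional representation of the finite dimensional C*-algebra $A / \ker (p_{\ld})$ with the quotient map, and use $I = \bigcap_{\ld} \ker (p_{\ld})$ to see that the resulting family separates points of $A / I$. The only difference is presentational (the paper fixes $a \notin I$ and produces one representation with $\pi(a) \neq 0$, while you assemble the whole faithful family at once), and your explicit remark that finite dimensionality supplies the completeness missing from Lemma~\ref{L-PropOfSeminm}(\ref{L-PropOfSeminm-2}) is a small point the paper leaves implicit.
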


\begin{proof}
Let $a \in A \SM I.$
It suffices to find a \fd\  representation $\pi$ of $A$
such that $\pi (a) \neq 0.$
Choose $\ld$ such that $p_{\ld} (a) \neq 0,$
let $\sm$ be an injective \fd\  representation of the
\fd\  \ca\  $A / \ker (p_{\ld}),$
and take $\pi$ to be the composition of $\sm$
with the quotient map $A \to A / \ker (p_{\ld}).$
\end{proof}

\begin{cor}\label{C:PFC-RFD}
Let $A$ be a \ca,
and let $(p_{\ld})_{\ld \in \Ld}$ be the collection of
C*~seminorms of its \pfcmp.
Set $I = \bigcap_{\ld \in \Ld} \ker (p_{\ld}).$
Then $A / I$ is \rfd.
\end{cor}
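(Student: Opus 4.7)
The plan is to obtain this as an immediate specialization of Proposition~\ref{P:PFIsRFD}. That proposition takes as input any profinite \pca\  $\big( A, (p_{\ld})_{\ld \in \Ld} \big)$ and concludes that $A / \bigcap_{\ld} \ker (p_{\ld})$ is \rfd. The corollary is phrased with exactly the same notation, the only difference being that $(p_{\ld})_{\ld \in \Ld}$ is now specified to be the family of all \csn s $p$ on~$A$ with $A / \ker (p)$ \fd, that is, the \pcas\  of the \pfcmp\  in the sense of \Def{D:Pfpca}.

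So the only thing to check is that this specific \pcas\  is profinite in the sense of \Def{D:Pfpca}, so that Proposition~\ref{P:PFIsRFD} actually applies. But this is precisely the content of Lemma~\ref{R:PfcmpIsPca}(\ref{R:PfcmpIsPca-2}): by construction, every $p_{\ld}$ in the family has $A / \ker (p_{\ld})$ \fd. Thus $\big( A, (p_{\ld})_{\ld \in \Ld} \big)$ is a profinite \pca, and Proposition~\ref{P:PFIsRFD} gives the conclusion $A / I$ is \rfd\  directly.

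There is no genuine obstacle here; the corollary is a one-line invocation. The only thing worth writing is the pointer to Lemma~\ref{R:PfcmpIsPca}(\ref{R:PfcmpIsPca-2}) to justify that the hypothesis of Proposition~\ref{P:PFIsRFD} is satisfied, followed by an appeal to that proposition.
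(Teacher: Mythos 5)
Your proposal is correct and matches the paper's proof exactly: the paper's own argument is the one-line observation that the \pfcmp\  is profinite (Lemma~\ref{R:PfcmpIsPca}(\ref{R:PfcmpIsPca-2})), after which Proposition~\ref{P:PFIsRFD} applies. You have simply written out the same invocation with the justifying reference made explicit.
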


\begin{proof}
The \pfcmp\  is profinite.
\end{proof}

The profinite completion of~$A$
is universal for \fd\  representations of~$A.$

\begin{prp}\label{P:PfcmpUniv}
Let $A$ be a \ca,
and let $\pi \colon A \to L (H)$ be a representation of $A$
on a Hilbert space~$H.$
Then \tfae:
\begin{enumerate}
\item\label{P:PfcmpUniv-1}
$\pi$ is \ct\  in the \pfpcas\  of~$A.$
\item\label{P:PfcmpUniv-2}
$\pi (A)$ is \fd.
\item\label{P:PfcmpUniv-3}
There is a finite set $F$ of \fd\  representations
of $A$ such that $\pi$ is a direct sum of copies of
representations in~$F.$
\end{enumerate}
Moreover, the restriction map $\sm \mapsto \sm |_A,$
from \ct\   representations of~${\ov{A}}$
to representations of~$A$ which are \ct\  %
in the \pfpcas,
is bijective.
\end{prp}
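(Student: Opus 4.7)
The plan is to establish the cyclic equivalence of (\ref{P:PfcmpUniv-1}), (\ref{P:PfcmpUniv-2}), and (\ref{P:PfcmpUniv-3}), and then separately prove bijectivity of the restriction map. The principal work is in the factorization step (\ref{P:PfcmpUniv-1}) $\Rightarrow$ (\ref{P:PfcmpUniv-2}), which rests on directedness of the profinite seminorm family; the remaining implications and the extension argument are routine once this is in hand.

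For (\ref{P:PfcmpUniv-1}) $\Rightarrow$ (\ref{P:PfcmpUniv-2}), the key point is that, by \Lem{R:PfcmpIsPca}, the profinite seminorms form a directed family (given two such seminorms $p$ and $q,$ the pointwise maximum is a \csn\ whose quotient injects into the \fd\ algebra $A / \ker (p) \oplus A / \ker (q)$). Consequently, continuity of $\pi$ at $0$ produces a single index $\ld$ and a constant $C \geq 0$ with $\| \pi (a) \| \leq C p_{\ld} (a)$ for all $a \in A,$ so $\ker (p_{\ld}) \S \ker (\pi)$ and $\pi$ factors as ${\bar{\pi}} \circ q_{\ld}$ through the \fd\ quotient $A / \ker (p_{\ld});$ in particular $\pi (A)$ is \fd. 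The converse (\ref{P:PfcmpUniv-2}) $\Rightarrow$ (\ref{P:PfcmpUniv-1}) is immediate: if $\pi (A)$ is \fd, then $p (a) = \| \pi (a) \|$ is a \csn\ in the profinite family, so continuity of $\pi$ against this single seminorm is built in.

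The equivalence (\ref{P:PfcmpUniv-2}) $\Leftrightarrow$ (\ref{P:PfcmpUniv-3}) is standard representation theory of \fd\ \ca s. If $\pi (A)$ is \fd, decompose $\pi (A) \cong \bigoplus_{i=1}^{k} M_{n_i} (\C);$ every representation of such an algebra on a Hilbert space is a direct sum of copies of the $k$ irreducible representations (projection to each $M_{n_i}$ followed by the defining action), and pulling these back along $\pi$ yields $F = \{ \sm_1, \dots, \sm_k \}$ with $\pi$ a direct sum of copies of elements of~$F.$ Conversely, if $\pi$ is such a direct sum, then $\pi$ factors through the map $a \mapsto (\sm_1 (a), \dots, \sm_k (a))$ from $A$ into the \fd\ algebra $\bigoplus_{i=1}^{k} \sm_i (A),$ forcing $\pi (A)$ to be \fd.

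For the moreover statement, the restriction map is well-defined because $A \hookrightarrow {\ov{A}}$ is continuous. Injectivity follows from density of $A$ in ${\ov{A}}$ (by construction of the completion): two continuous representations of ${\ov{A}}$ agreeing on a dense subset must agree everywhere. For surjectivity, take $\pi$ satisfying (\ref{P:PfcmpUniv-1}) and reuse the factorization $\pi = {\bar{\pi}} \circ q_{\ld}$ obtained above; since $A / \ker (p_{\ld})$ is already complete (being \fd), it equals $A_{\ld},$ so composing ${\bar{\pi}}$ with the canonical continuous projection ${\ov{A}} \to A_{\ld}$ yields a continuous representation of ${\ov{A}}$ whose restriction to $A$ is $\pi.$
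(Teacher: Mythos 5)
Your proposal is correct and follows essentially the same route as the paper: the implication (\ref{P:PfcmpUniv-1})$\Rightarrow$(\ref{P:PfcmpUniv-2}) via factorization through the kernel of a single seminorm (using directedness from \Lem{R:PfcmpIsPca}), the equivalence with~(\ref{P:PfcmpUniv-3}) via the representation theory of \fd\ \ca s, and the final statement via the unique \ct\ extension of $\pi$ to the completion. You merely supply more of the routine details (the max of two profinite seminorms, and the explicit density and factorization arguments for the restriction map) than the paper chooses to record.
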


In particular,
the irreducible \ct\   representations of~${\ov{A}}$
are exactly the irreducible \fd\   representations of~$A.$

\begin{proof}[Proof of Proposition~\ref{P:PfcmpUniv}]
Let $(p_{\ld})_{\ld \in \Ld}$ be the \pfpcas\  of~$A.$

We prove the first part.
That (\ref{P:PfcmpUniv-1}) implies~(\ref{P:PfcmpUniv-2})
follows from the fact that any \ct\  \hm\  from
$A$ with any \pcas\  must factor through the quotient
by the kernel of one of the seminorms in the \pcas.
To see that (\ref{P:PfcmpUniv-2}) implies~(\ref{P:PfcmpUniv-3}),
we observe that a \fd\  \ca\  has only finitely
many unitary equivalence classes of irreducible representations,
that they are all \fd,
and that every representation is a direct sum of
irreducible representations.
That (\ref{P:PfcmpUniv-3}) implies~(\ref{P:PfcmpUniv-1})
is obvious.

The second part is clear from the following observation.
Let $\pi$ be a \rpn\  of~$A$ which is \ct\  for
the \pcas\  $(p_{\ld})_{\ld \in \Ld}.$
Then $\pi$ extends uniquely to the completion ${\ov{A}}.$
\end{proof}

\begin{prp}\label{P-IdPfCstComp}
Let $A$ be a \ca.
Equip $A$ with the \pfpcas,
and let ${\ov{A}}$ be the corresponding completion.
Let $R$ be a set consisting of one representative
$\pi \colon A \to L (H_{\pi})$
of each unitary equivalence class of \fd\  irreducible
representations of~$A.$
Give $\prod_{\pi \in R} L (H_{\pi})$ the product topology.
Then there is a unique isomorphism
$\af \colon {\ov{A}} \to \prod_{\pi \in R} L (H_{\pi})$
of topological algebras such that, for $a \in A$
with image ${\ov{a}} \in {\ov{A}},$
we have $\af ({\ov{a}}) = ( \pi (a) )_{\pi \in R}.$
\end{prp}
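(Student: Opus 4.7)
The plan is to identify a particularly simple cofinal subfamily of the profinite pro-C*-algebra structure, namely one indexed by finite subsets of $R$, and then recognize the inverse limit over that cofinal subfamily as the product $\prod_{\pi \in R} L(H_\pi)$.

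First, for each finite subset $F \subset R$, form the direct sum representation $\pi_F = \bigoplus_{\pi \in F} \pi \colon A \to \bigoplus_{\pi \in F} L(H_{\pi})$ and let $p_F (a) = \| \pi_F (a) \|$. This is a C*~seminorm on~$A$, and since distinct elements of~$R$ are pairwise inequivalent irreducible finite-dimensional representations, a Jacobson/Burnside density argument shows that $\pi_F$ is surjective, so $A / \ker (p_F) \cong \bigoplus_{\pi \in F} L(H_{\pi})$ is finite dimensional. The family $\{ p_F \}$ is directed under inclusion of subsets, with the natural connecting maps being the coordinate projections.

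The second step is to show that $\{ p_F \colon F \subset R \text{ finite} \}$ is in fact equal to the collection of all C*~seminorms on~$A$ with finite-dimensional quotient. Given any such~$p$, the quotient $A / \ker (p)$ is a finite direct sum of full matrix algebras $\bigoplus_{i=1}^n M_{n_i}$, so composing the quotient with each summand projection yields finitely many finite-dimensional irreducible representations of~$A$, equivalent to some $\pi_1, \dots, \pi_n \in R$. Setting $F = \{ \pi_1, \dots, \pi_n \}$, a direct computation of norms in the finite-dimensional C*-algebra $A / \ker (p)$ shows $p = p_F$. Consequently the profinite pro-C*-algebra structure is equivalent to the one given by $\{ p_F \}_F$, so
\[
{\ov{A}} \;=\; \invlim_{F} \, \bigoplus_{\pi \in F} L(H_{\pi}) \;\cong\; \prod_{\pi \in R} L(H_{\pi}),
\]
with the product topology on the right side matching the inverse-limit topology.

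Finally, to identify the explicit map~$\af$, use Proposition~\ref{P:PfcmpUniv}: each $\pi \in R$, being finite-dimensional, extends uniquely to a continuous representation ${\ov{\pi}} \colon {\ov{A}} \to L (H_{\pi})$, and these assemble to a continuous *-homomorphism $\af \colon {\ov{A}} \to \prod_{\pi \in R} L (H_{\pi})$ with $\af ({\ov{a}}) = (\pi(a))_{\pi \in R}$ for $a \in A$. Under the isomorphism of the previous step, $\af$ becomes the canonical identification, hence is a bijective isomorphism of topological algebras. Uniqueness follows because the image of~$A$ is dense in ${\ov{A}}$ and the prescribed formula determines $\af$ on~$A$.

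The main obstacle is the density step in the middle paragraph: one must rule out seminorms whose finite-dimensional quotient is a proper subalgebra of $\bigoplus M_{n_i}$ sitting diagonally across equivalent components. This is handled by the observation that pairwise inequivalent irreducible representations of any algebra jointly surject onto the product of their target matrix algebras, combined with the fact that a C*-seminorm is completely determined by its kernel together with the C*-algebra structure on the quotient.
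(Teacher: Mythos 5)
Your proof is correct, but it is organized quite differently from the paper's. The paper constructs $\af$ directly: it uses Proposition~\ref{P:PfcmpUniv} to get a continuous $\af$ on ${\ov{A}}$, checks injectivity by comparing the kernel of $A \to {\ov{A}}$ with $\bigcap_{\pi \in R} \ker(\pi)$, and then proves surjectivity via density, by an explicit construction: given finite $F \subset R$, $\pi_0 \in R \setminus F$ and a target $c$, it produces $b \in A$ with $\pi_0(b) = c$ and $\pi(b) = 0$ for $\pi \in F$, using an approximate identity for $\ker(\pi_0)$ and functional calculus. You instead identify the profinite pro-C*-algebra structure outright as the family $\{p_F\}$ indexed by finite $F \subset R$, after which the completion is transparently the full product; this yields a slightly more informative intermediate statement (an explicit description of all finite-dimensional-quotient C*~seminorms, using that a finite-dimensional C*-algebra has a unique C*-norm so such a seminorm is determined by its kernel). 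The one point where you lean on an unproved assertion is exactly the content the paper proves by hand: the joint surjectivity of $\bigoplus_{\pi \in F} \pi$ onto $\bigoplus_{\pi \in F} L(H_\pi)$. Your citation of ``Jacobson/Burnside density'' only covers surjectivity of a single irreducible $\pi$ onto $L(H_\pi)$; for the joint statement you should say that pairwise inequivalent finite-dimensional irreducibles have distinct kernels, that these kernels are maximal two-sided ideals (the quotients being simple), hence pairwise comaximal, and then apply the Chinese remainder theorem --- or reproduce the paper's approximate-identity argument. You should also record the routine fact that since each $A \to A/\ker(p_F)$ and each connecting map is surjective, the image of $A$ is dense in the inverse limit, so the completion is the whole product rather than a proper closed subalgebra. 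With those two standard points supplied, the argument is complete.
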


\begin{proof}
Let
$\af_0 \colon A \to \prod_{\pi \in R} L (H_{\pi})$
be the \hm\  given by $\af_0 (a) = ( \pi (a) )_{\pi \in R}$
for $a \in A.$
By Proposition~\ref{P:PfcmpUniv},
a net $(a_{\ld})_{\ld \in \Ld}$ in~$A$ converges to~$a \in A$
in the topology from the \pfpcas\  %
\ifo\  $\af_0 (a_{\ld}) \to \af_0 (a).$
Therefore $\af_0$ induces a unique
\ct\  \hm\  %
$\af \colon {\ov{A}} \to \prod_{\pi \in R} L (H_{\pi}).$
Moreover, if $\af$ is injective, it will follow that
$\af$ is a homeomorphism onto its image.

It therefore remains to prove that $\af$ is bijective.
We first consider injectivity.
The kernel of the map $A \to {\ov{A}}$ is the intersection
of the kernels of all \hm s to \fd\  \ca s,
and $\ker (\af_0)$ is the intersection
of the kernels of all \fd\  irreducible
representations of~$A.$
These are clearly equal,
and injectivity follows.

Since ${\overline{A}}$ is complete,
surjectivity will follow from density of $\af_0 (A)$
in $\prod_{\pi \in R} L (H_{\pi}).$
For this, it is enough to prove that if $F \subset R$ is finite,
$\pi_0 \in R \setminus F,$
and $c \in L (H_{\pi}),$
then there exists $b \in A$ such that $\pi_0 (b) = c$
and $\pi (b) = 0$ for all $\pi \in F.$
Set $I = \ker (\pi_0)$
and let $(e_{\ld})_{\ld \in \Ld}$ be an approximate identity for~$I.$
Let $\pi \in F.$
Since $\pi$ and $\pi_0$ are distinct, \fd, and irreducible,
we have $\ker (\pi_0) \not\subset \ker (\pi).$
Therefore $\pi (I) \neq 0,$ whence $\pi (I) = L (H_{\pi}).$
So $( \pi (e_{\ld}) )_{\ld \in \Ld}$
is an approximate identity for~$L (H_{\pi}).$
Thus $\pi (e_{\ld}) \to 1$ in norm.
Since $F$ is finite,
there is $\ld_0 \in \Ld$ such that
$\| 1 - \pi (e_{\ld_0}) \| < \tfrac{1}{2}$ for all $\pi \in F.$
Let $f \colon [0, 1] \to [0, 1]$ be a \cfn\  %
such that $f (0) = 0$
and $f (t) = 1$ for all $t \in \big[ \tfrac{1}{2}, 1 \big].$
Then $f ( \pi (e_{\ld_0}) ) = 1$ for all $\pi \in F.$
Set $a = 1 - f (e_{\ld_0}),$
getting $\pi_0 (a) = 1$ and $\pi (a) = 0$ for all $\pi \in F.$
Now choose $b_0 \in A$ such that $\pi (b_0) = c,$
and set $b = a b_0.$
This is the required element.
\end{proof}

\begin{exa}\label{E:CXPfCmp}
Let $X$ be a locally compact Hausdorff space.
Then $C_0 (X)$ is \rfd.
It follows from Proposition~\ref{P-IdPfCstComp}
that the topology determined by the
\pfpcas\  %
on $C_0 (X)$ is the topology of pointwise convergence,
and the completion ${\ov{C_0 (X)}}$ consists
of {\emph{all}} functions
(not even necessarily \ct\  or bounded) from $X$ to~$\C.$

In particular,
the \pfpcas\  on $C_0 (X)$ is usually not
full (Definition~\ref{D:Full}(\ref{D-Full-1})),
not even if $X$ is compact.
\end{exa}

Since we make extensive use of the multiplier algebra
$M (A)$ of a \ca~$A,$
we summarize some of its properties for convenient reference.

\begin{thm}\label{T:MultAlg}
Let $A$ be a \ca.
\begin{enumerate}
\item\label{T:MultAlg-Surj}
Let $B$ be a \ca\  and let $\ph \colon A \to B$
be a surjective \hm.
Then there exists a unique \hm\  ${\wt{\ph}} \colon M (A) \to M (B)$
such that ${\wt{\ph}} (a) = \ph (a)$ for all $a \in A.$
Moreover, ${\wt{\ph}}$ is unital.
\item\label{T:MultAlg-Rep}
Let $H$ be a Hilbert space,
and let $\pi \colon A \to L (H)$ be a nondegenerate representation.
Then there exists a unique representation
${\wt{\pi}} \colon M (A) \to L (H)$
whose restriction to~$A$ is~$\pi.$
Moreover, ${\wt{\pi}}$ is unital,
and if $\pi$ is injective then so is~${\wt{\pi}}.$
\item\label{T:MultAlg-Rfd}
If $A$ is \rfd, then so is $M (A).$
\end{enumerate}
\end{thm}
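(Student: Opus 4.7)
The plan is to handle the three parts in order. Parts~(\ref{T:MultAlg-Surj}) and~(\ref{T:MultAlg-Rep}) are standard facts about multiplier algebras, which I would sketch briefly, while~(\ref{T:MultAlg-Rfd}) is the main point and follows once~(\ref{T:MultAlg-Rep}) is available. For~(\ref{T:MultAlg-Surj}), the key observation is that every $m \in M (A)$ preserves each closed two-sided ideal $J \subseteq A$: if $(u_{\ld})$ is a bounded approximate identity for~$J,$ then for $x \in J$ we have $m x = \lim_{\ld} (m u_{\ld}) x,$ and each $(m u_{\ld}) x$ lies in $A J \subseteq J.$ Applied to $J = \ker (\ph),$ this shows that left and right multiplication by~$m$ descend to well-defined bounded maps on $B = A / \ker (\ph),$ yielding a multiplier $\wt{\ph} (m) \in M (B)$ characterized by $\wt{\ph} (m) \ph (a) = \ph (m a)$ and $\ph (a) \wt{\ph} (m) = \ph (a m).$ That $\wt{\ph}$ is a unital \hm\ extending~$\ph$ is then immediate, and uniqueness holds because any extension is forced to act this way on~$B.$

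For~(\ref{T:MultAlg-Rep}), I would use a bounded approximate identity $(e_{\ld})$ of~$A.$ By nondegeneracy, $\pi (e_{\ld}) \to 1_H$ in the strong operator topology, and for each $m \in M (A)$ the net $(\pi (m e_{\ld}))$ is bounded by~$\| m \|$ and converges strongly: given $a \in A$ and $\xi \in H,$ one has $\pi (m e_{\ld}) \pi (a) \xi = \pi (m e_{\ld} a) \xi \to \pi (m a) \xi,$ and vectors of the form $\pi (a) \xi$ are dense in~$H.$ Setting $\wt{\pi} (m)$ equal to this strong limit gives the required unital extension; it is a *-\hm\ by continuity of multiplication in the strong operator topology on bounded sets, and it is unique because any extension must satisfy $\wt{\pi} (m) \pi (a) \xi = \pi (m a) \xi$ on the dense subspace. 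If $\pi$ is injective, then $\wt{\pi} (m) = 0$ forces $\pi (m a) = 0$ and hence $m a = 0$ for all $a,$ so $m = 0$ because $A$ is an essential ideal of $M (A).$

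For~(\ref{T:MultAlg-Rfd}), let $(\pi_i)_{i \in I}$ be a faithful family of \fd\ representations of~$A,$ with $\pi_i \colon A \to L (H_i).$ The main step is to replace each $\pi_i$ by a nondegenerate representation without losing faithfulness. Set $K_i = \pi_i (A) H_i,$ which is closed in~$H_i$ because $\dim H_i < \infty,$ and is invariant under $\pi_i (A).$ Its orthogonal complement consists exactly of those $\eta \in H_i$ annihilated by every $\pi_i (b),$ so the restriction $\pi_i^0 \colon A \to L (K_i)$ is nondegenerate and satisfies $\pi_i (a) = 0 \Leftrightarrow \pi_i^0 (a) = 0.$ Hence $(\pi_i^0)_{i \in I}$ is again a faithful family of \fd\ representations, now nondegenerate. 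By~(\ref{T:MultAlg-Rep}), each $\pi_i^0$ extends to a \fd\ representation $\wt{\pi_i^0} \colon M (A) \to L (K_i).$ If $\wt{\pi_i^0} (m) = 0$ for every~$i,$ then $\pi_i^0 (m a) = \wt{\pi_i^0} (m) \pi_i^0 (a) = 0$ for all $a \in A$ and all~$i,$ so $m a = 0$ for all $a \in A,$ and hence $m = 0$ as in~(\ref{T:MultAlg-Rep}). The only real content is this last part; the small obstacle is remembering to reduce to the nondegenerate case before invoking~(\ref{T:MultAlg-Rep}).
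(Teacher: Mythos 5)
Your proposal is correct, and in substance it takes the same route as the paper; the main difference is that the paper simply cites Theorem 3.1.8 of Murphy for part~(\ref{T:MultAlg-Surj}) and Proposition 3.12.3 of Pedersen for part~(\ref{T:MultAlg-Rep}), whereas you reprove these standard facts directly with approximate identities, and for part~(\ref{T:MultAlg-Rfd}) the paper forms the single direct sum $\pi = \bigoplus_{\ld} \pi_{\ld},$ extends it, and uses the uniqueness clause of part~(\ref{T:MultAlg-Rep}) to conclude $\wt{\pi} = \bigoplus_{\ld} \wt{\pi}_{\ld},$ while you extend each summand separately and check faithfulness of the extended family by hand. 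The two versions of part~(\ref{T:MultAlg-Rfd}) rest on exactly the same facts (part~(\ref{T:MultAlg-Rep}) plus essentiality of $A$ in $M (A)$), so this is a difference of bookkeeping rather than of idea; one thing your version does buy is an explicit reduction to the nondegenerate case via the essential subspace $K_i = \pi_i (A) H_i,$ a point the paper glosses over. One small imprecision in part~(\ref{T:MultAlg-Rep}): the phrase ``*-homomorphism by continuity of multiplication in the strong operator topology on bounded sets'' covers multiplicativity but not the adjoint, since $T \mapsto T^*$ is not strongly continuous; the identity $\wt{\pi} (m^*) = \wt{\pi} (m)^*$ should instead be checked directly on the dense subspace, for instance from $\langle \pi (m^* a) \xi, \, \pi (b) \eta \rangle = \langle \pi (a) \xi, \, \pi (m b) \eta \rangle.$ This is a presentational slip, not a gap.
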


\begin{proof}
Part~(\ref{T:MultAlg-Surj}) follows from Theorem 3.1.8 of~\cite{Mr},
with $A / \ker (\ph) \cong B$ in place of~$I$ and $M (A) / \ker (\ph)$
in place of~$A.$

When $\pi$ is injective, existence
and injectivity in part~(\ref{T:MultAlg-Rep})
follow from Proposition 3.12.3 of~\cite{Pd1}.
We get the general case from this case by combining
this case with part~(\ref{T:MultAlg-Surj}).
Uniqueness follows from the fact that a representation is uniquely
determined by its restriction to any ideal for which
the restriction is nondegenerate.
It is easy to see that ${\wt{\pi}}$ is unital.

To prove part~(\ref{T:MultAlg-Rfd}),
in part~(\ref{T:MultAlg-Rep}) choose a
faithful family $(\pi_{\ld})_{\ld \in \Ld}$ of \fd\  representations
of~$A.$
Set $\pi = \bigoplus_{\ld \in \Ld} \pi_{\ld}.$
Then ${\wt{\pi}}$ is faithful
by part~(\ref{T:MultAlg-Rep}).
Uniqueness in part~(\ref{T:MultAlg-Rep})
implies that
${\wt{\pi}} = \bigoplus_{\ld \in \Ld} {\wt{\pi}}_{\ld}.$
So $M (A)$ is \rfd.
\end{proof}

\begin{rmk}\label{R:1-16A}
Let $\big( A, (p_{\ld})_{\ld \in \Ld} \big)$ be a \pca.
Then there is an induced \pcas\  on $M (A),$
given by the family $(q_{\ld})_{\ld \in \Ld}$ of \csn s
defined as follows.
For $\ld \in \Ld,$
let $\kp_{\ld} \colon A \to A / \ker (p_{\ld})$ be the quotient map.
Theorem~\ref{T:MultAlg}(\ref{T:MultAlg-Surj})
provides a \hm\  %
${\wt{\kp}}_{\ld} \colon M (A) \to M ( A / \ker (p_{\ld}) ).$
Then set $q_{\ld} (a) = \big\| {\wt{\kp}}_{\ld} (a) \big\|.$

We really get a \pcas\  this way, since if $\ld_1 \leq \ld_2$
then the \hm\  %
$\kp_{\ld_1, \ld_2} \colon
    A / \ker (p_{\ld_2}) \to A / \ker (p_{\ld_1})$
is surjective, so extends to a \hm\  %
${\wt{\kp}}_{\ld_1, \ld_2} \colon
    M ( A / \ker (p_{\ld_2}) ) \to M ( A / \ker (p_{\ld_1}) ),$
and
${\wt{\kp}}_{\ld_1, \ld_2} \circ {\wt{\kp}}_{\ld_2}
   = {\wt{\kp}}_{\ld_1}$
by uniqueness in Theorem~\ref{T:MultAlg}(\ref{T:MultAlg-Surj}).
\end{rmk}

\begin{prp}\label{P:MultRFD}
Let $A$ be a \ca.
Then the identity map of $A$ extends uniquely to a \ct\  \hm\  %
from the multiplier algebra $M (A)$ to the completion
${\ov{A}}$ of $A$ in the topology from the \pfcmp\  \pca\  structure
on~$A.$
Moreover, the resulting \hm\  is injective \ifo\  $A$ is \rfd.
\end{prp}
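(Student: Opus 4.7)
The plan is to build $\Phi$ componentwise from the inverse system defining~$\overline{A}$. Let $(p_\ld)_{\ld \in \Ld}$ denote the \pfpcas\  on~$A$ and write $A_\ld = A/\ker(p_\ld)$; since each $A_\ld$ is \fd, it is unital with $M(A_\ld) = A_\ld$. Applying Theorem~\ref{T:MultAlg}(\ref{T:MultAlg-Surj}) to each quotient map $\kp_\ld \colon A \to A_\ld$ yields a unique *-\hm\  $\wt{\kp}_\ld \colon M(A) \to A_\ld$, and Remark~\ref{R:1-16A} already records that these extensions are compatible with the connecting maps of the system $(A_\ld)_{\ld \in \Ld}$. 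They therefore assemble into a *-\hm\  $\Phi \colon M(A) \to \invlim_\ld A_\ld = \overline{A}$, and the estimate $p_\ld(\Phi(m)) = \|\wt{\kp}_\ld(m)\| \le \|m\|$ gives continuity while also showing that $\Phi$ extends the canonical map $A \to \overline{A}$. For uniqueness, given any other *-\hm\  $\Phi'$ extending the identity on~$A$, the composition $\pi_\ld \circ \Phi' \colon M(A) \to A_\ld = M(A_\ld)$ with the canonical projection $\pi_\ld \colon \overline{A} \to A_\ld$ extends $\kp_\ld$, so by uniqueness in Theorem~\ref{T:MultAlg}(\ref{T:MultAlg-Surj}) it coincides with $\wt{\kp}_\ld = \pi_\ld \circ \Phi$; this forces $\Phi' = \Phi$.

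For the injectivity criterion, one direction is easy: if $A$ is not \rfd, then Lemma~\ref{R:PfcmpIsPca}(\ref{R:PfcmpIsPca-3}) yields a nonzero element of $A \subseteq M(A)$ in the kernel of~$\Phi$. For the converse, assume $A$ is \rfd\  and select a family $(\pi_\ld)$ of nondegenerate \fd\  \rpn s of~$A$ which is faithful (after passing to essential subspaces if necessary); by Theorem~\ref{T:MultAlg}(\ref{T:MultAlg-Rfd}), the extensions $\wt{\pi}_\ld$ provided by Theorem~\ref{T:MultAlg}(\ref{T:MultAlg-Rep}) form a faithful family on~$M(A)$. Given $\Phi(m) = 0$, the goal is to show $\wt{\pi}_\ld(m) = 0$ for each~$\ld$. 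The \csn\  $a \mapsto \|\pi_\ld(a)\|$ has \fd\  quotient, so by Definition~\ref{D:Pfpca} it equals $p_{\mu_\ld}$ for some $\mu_\ld \in \Ld$, giving a factorization $\pi_\ld = \sm_\ld \circ \kp_{\mu_\ld}$ with an injective *-\hm\  $\sm_\ld \colon A_{\mu_\ld} \to L(H_\ld)$. Nondegeneracy of $\pi_\ld$ combined with finite-dimensionality of~$A_{\mu_\ld}$ (an approximate identity of~$A$ maps under $\kp_{\mu_\ld}$ to an approximate identity of~$A_{\mu_\ld}$, which converges in norm to~$1$) forces $\sm_\ld$ to be unital, so $\sm_\ld \circ \wt{\kp}_{\mu_\ld}$ is a second unital extension of $\pi_\ld$ to~$M(A)$ and equals $\wt{\pi}_\ld$ by the uniqueness in Theorem~\ref{T:MultAlg}(\ref{T:MultAlg-Rep}). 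Since $\Phi(m)=0$ gives $\wt{\kp}_{\mu_\ld}(m)=0$, we obtain $\wt{\pi}_\ld(m) = \sm_\ld(\wt{\kp}_{\mu_\ld}(m)) = 0$ for every~$\ld$, and faithfulness of $(\wt{\pi}_\ld)$ forces $m=0$.

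The main obstacle is this last chain of reductions. One must be careful to restrict to extensions of nondegenerate \fd\  \rpn s of~$A$, since $M(A)$ can easily possess \fd\  \rpn s that vanish on~$A$ without vanishing on~$m$, and one then needs unitality of the factor~$\sm_\ld$ so that the uniqueness of the multiplier extension in Theorem~\ref{T:MultAlg}(\ref{T:MultAlg-Rep}) is actually applicable; both of these steps use finite-dimensionality essentially.
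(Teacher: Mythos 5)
Your proof is correct, and the existence/uniqueness part is essentially the paper's argument: apply Theorem~\ref{T:MultAlg}(\ref{T:MultAlg-Surj}) to each quotient map $\kappa_\lambda \colon A \to A/\ker(p_\lambda)$, use the uniqueness clause to get compatibility, and assemble a map into the inverse limit. Where you genuinely diverge is the direction ``$A$ \rfd\ implies injectivity.'' You extend a faithful family of nondegenerate \fd\ \rpn s of $A$ to $M(A)$ and then match each one with a seminorm $p_{\mu}$, which requires checking that the injective factor $\sigma_\lambda$ is unital so that the uniqueness in Theorem~\ref{T:MultAlg}(\ref{T:MultAlg-Rep}) applies; this is sound, and the pitfalls you flag (nondegeneracy, unitality of $\sigma_\lambda$) are real. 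The paper bypasses all of it with one elementary observation: for nonzero $x \in M(A)$ choose $a \in A$ with $x a \neq 0$; then $x a \in A$, so faithfulness of the profinite structure on $A$ (this is where \rfd\ enters) gives $\lambda$ with $p_\lambda(xa) \neq 0$, and $\widetilde{\kappa}_\lambda(x)\kappa_\lambda(a) = \kappa_\lambda(xa) \neq 0$ forces $\widetilde{\kappa}_\lambda(x) \neq 0$. That is, a multiplier is determined by its products with elements of $A$, so injectivity on $M(A)$ reduces directly to faithfulness on $A$, with no representation theory of $M(A)$ needed; your route essentially re-derives a sharpened form of Theorem~\ref{T:MultAlg}(\ref{T:MultAlg-Rfd}) along the way. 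For the converse direction you argue the contrapositive via Lemma~\ref{R:PfcmpIsPca}(\ref{R:PfcmpIsPca-3}), while the paper argues directly that a subalgebra of ${\ov{A}}$ is \rfd; these are the same argument.
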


\begin{proof}
Let $(p_{\ld})_{\ld \in \Ld}$ be the collection of \csn s
defining the \pca\  structure of the \pfcmp\  of~$A.$
For $\ld, \mu \in \Ld$ with $\ld \leq \mu,$
further let
$\kp_{\ld} \colon A \to A / \ker (p_{\ld})$ and
$\kp_{\ld, \mu} \colon A / \ker (p_{\mu}) \to A / \ker (p_{\ld})$
be the quotient maps.

It follows from Theorem~\ref{T:MultAlg}(\ref{T:MultAlg-Surj})
that there is a unique family of \hm s
$\ph_{\ld} \colon M (A) \to A / \ker (p_{\ld})$
such that $\ph_{\ld}$
extends the quotient map $\kp_{\ld} \colon A \to A / \ker (p_{\ld}).$
Uniqueness further implies that
whenever $\ld, \mu \in \Ld$ with $\ld \leq \mu,$
we have $\kp_{\ld, \mu} \circ \ph_{\mu} = \ph_{\ld}.$
It follows that there is
a unique \hm\  $\ph \colon M (A) \to {\ov{A}}$
such that $\kp_{\ld} \circ \ph = \ph_{\ld}$ for all $\ld \in \Ld.$
It is clear that $\ph$ is the unique \ct\  \hm\  from $M (A)$
to ${\ov{A}}$ which extends the identity map of~$A.$

Now assume $A$ is \rfd;
we prove that $\ph$ is injective.
Let $x \in M (A)$ be nonzero.
Choose $a \in A$ such that $x a \neq 0.$
The element $x a$ is in~$A,$
so there is $\ld \in \Ld$ such that $p_{\ld} (x a) \neq 0.$
Then $\ph_{\ld} (x) \kp_{\ld} (a) = \kp_{\ld} (x a) \neq 0,$
so $\ph_{\ld} (x) \neq 0.$
Thus $\ph (x) \neq 0.$

Finally, assume that $\ph$ is injective;
we prove that $A$ is \rfd.
By construction,
${\ov{A}}$ has a faithful family of \ct\  \fd\  representations
on Hilbert spaces.
Therefore any \ca\  contained in ${\ov{A}}$ is \rfd.
Since $\ph$ is injective,
the conclusion follows.
\end{proof}

\begin{prp}\label{P:FunctPFC}
Let $A$ and $B$ be \ca s,
and let $\ph \colon A \to B$ be a \hm.
Then $\ph$ is \ct\  with respect to the topologies
on $A$ and $B$ coming from their \pfcmp s.
\end{prp}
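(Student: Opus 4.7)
The plan is to reduce continuity of $\ph$ to the observation that pulling back a seminorm in the profinite pro-C*-algebra structure of~$B$ yields a seminorm in the profinite pro-C*-algebra structure of~$A$.

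First I would unpack what has to be shown. By the description of the topology on~$B$ (Remark~\ref{R:Topology}) coming from its \pfcmp, a map into~$B$ is \ct\  in the profinite topology \ifo\  its composition with $q$ is \ct\  for every \csn\  $q$ on~$B$ with $B / \ker (q)$ \fd. So it suffices to fix such a $q$ and show that the seminorm $p = q \circ \ph$ on~$A$ is \ct\  in the profinite topology on~$A.$

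The key step is then to recognize that $p$ itself belongs to the \pfpcas\  of~$A.$ It is evidently a \csn\  on~$A.$ Moreover $\ph$ descends to an injective *-\hm\  ${\ov{\ph}} \colon A / \ker (p) \to B / \ker (q),$ because $\ker (p) = \ph^{-1} ( \ker (q) )$ by construction. Since $B / \ker (q)$ is \fd, so is its subalgebra (the image of~${\ov{\ph}}$), hence $A / \ker (p)$ is \fd. Thus $p$ is one of the \csn s defining the \pfpcas\  of~$A$ (Definition~\ref{D:Pfpca}).

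Finally, for any net $(a_\af)_{\af \in I}$ in~$A$ converging to $a \in A$ in the profinite topology on~$A,$ we have $p (a_\af - a) \to 0$ by definition of that topology, that is, $q ( \ph (a_\af) - \ph (a) ) \to 0.$ Since $q$ was an arbitrary \csn\  in the \pfpcas\  of~$B,$ this shows $\ph (a_\af) \to \ph (a)$ in the profinite topology on~$B,$ proving continuity. There is no real obstacle here; the only point requiring care is verifying that the induced map on the quotients is injective so that finite-dimensionality transfers from $B / \ker (q)$ to $A / \ker (p).$
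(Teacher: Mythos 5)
Your proof is correct and takes essentially the same approach as the paper: the paper's one-line proof pulls back finite dimensional representations of $B$ along $\ph$, which is exactly your observation that $q \circ \ph$ lies in the \pfpcas\  of~$A$ because $A / \ker (q \circ \ph)$ embeds into the \fd\  algebra $B / \ker (q)$. You have simply written out the routine verification that the paper leaves implicit.
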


\begin{proof}
This follows immediately from the fact that
if $\pi$ is a \fd\  representation of~$B,$
then $\pi \circ \ph$ is a \fd\  representation of~$A.$
\end{proof}

As is clear from Example~\ref{E:CXPfCmp},
the \pfcmp\  of a \ca\  is in general extremely large.
Here is a situation in which it is not so bad.
We will apply the result to the \ca s of compact groups.

\begin{prp}\label{P:1-19A}
Let $S$ be a set, and for $s \in S$ let $n (s) \in \N.$
Set $A = \bigoplus_{s \in S} M_{n (s)},$
and give it the \pfpcas\  (\Def{D:Pfpca}).
Further give $M (A)$ the \pcas\  coming from Remark~\ref{R:1-16A}.
Then:
\begin{enumerate}
\item\label{P:1-19A-1}
We have
\[
M (A) = \left\{ a \in \prod_{s \in S} M_{n (s)} \colon
          \sup_{s \in S} \| a_{s} \| < \I \right\}.
\]
\item\label{P:1-19A-2}
The \pca\  topologies (\Def{R:Topology}) on $A$ and $M (A)$
both come from the obvious identifications of $A$ and $M (A)$
with subsets of $\prod_{s \in S} M_{n (s)}$ by restricting
the product topology.
\item\label{P:1-19A-3}
Both ${\ov{A}}$ and ${\ov{M (A)}}$
are equal to $\prod_{s \in S} M_{n (s)}.$
\item\label{P:1-19A-4}
The algebras of bounded elements
(in the sense of Definition~1.10 of~\cite{Ph1})
in both ${\ov{A}}$ and ${\ov{M (A)}}$
are equal to $M (A).$
\item\label{P:1-19A-5}
The \pcas\  on~$A$ is faithful
(Definition~\ref{D:Full}(\ref{D-Faithful})).
\item\label{P:1-19A-6}
The \pcas\  on $M (A)$ is faithful
and full (Definition~\ref{D:Full}(\ref{D-Full-1})).
\end{enumerate}
\end{prp}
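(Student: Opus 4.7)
The plan is to identify the \pfpcas\ on~$A$ explicitly, compute $M(A),$ and then read off the six assertions by direct calculation.

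First I would describe the relevant \csn s. The ideals of $A = \bigoplus_{s \in S} M_{n (s)}$ are exactly the subsums $\bigoplus_{s \in T} M_{n (s)}$ for $T \subset S,$ so every quotient of~$A$ has the form $\bigoplus_{s \in F} M_{n (s)},$ and this quotient is \fd\ \ifo\ $F$ is finite. Thus up to equivalence the \csn s in the \pfpcas\ are
\[
p_F (a) = \max_{s \in F} \| a_s \|,
\]
indexed by finite $F \subset S,$ directed by inclusion. Consequently the \pfpcas\ topology on~$A$ is the restriction of the product topology on $\prod_s M_{n (s)},$ its inverse limit is $\prod_s M_{n (s)},$ and $\bigcap_F \ker (p_F) = \{ 0 \}.$ This yields the $A$-parts of~(\ref{P:1-19A-2}) and~(\ref{P:1-19A-3}) together with~(\ref{P:1-19A-5}).

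Next I would prove~(\ref{P:1-19A-1}), which is the main step. Let $B = \bigl\{ a \in \prod_s M_{n (s)} \colon \sup_s \| a_s \| < \I \bigr\}.$ Then $B$ is a \uca\ containing $A$ as an essential ideal: for $b \in B$ with $b A = 0,$ multiplying by the unit $e_s \in M_{n (s)} \subset A$ gives $b_s = 0$ for every~$s.$ The universal property of the multiplier algebra then yields an injective unital \hm\ $B \hookrightarrow M (A)$ which is the identity on~$A.$ For the reverse inclusion, given $m \in M (A),$ set $m_s = m e_s \in M_{n (s)};$ then $\| m_s \| \leq \| m \|,$ so $(m_s)_{s \in S} \in B$ and its image in $M (A)$ agrees with~$m$ on the dense ideal~$A.$ This identifies $M (A) = B.$

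With~(\ref{P:1-19A-1}) in hand, Remark~\ref{R:1-16A} makes the \pcas\ on $M (A)$ explicit: the induced \hm\
${\wt{\kp}}_F \colon M (A) \to M \bigl( A / \ker (p_F) \bigr) = \bigoplus_{s \in F} M_{n (s)}$
is simply $a \mapsto ( a_s )_{s \in F},$ so $q_F (a) = \max_{s \in F} \| a_s \|,$ again the restriction of the product topology; this completes~(\ref{P:1-19A-2}) and~(\ref{P:1-19A-3}). For~(\ref{P:1-19A-4}), an element $a \in \prod_s M_{n (s)}$ has $\sup_F p_F (a) = \sup_s \| a_s \|,$ which is finite \ifo\ $a \in M (A),$ matching the definition of bounded elements. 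Faithfulness in~(\ref{P:1-19A-6}) is the same kernel computation as for~$A,$ and fullness follows because any consistent family $(a_F)$ with $\| a_F \| \leq 1$ glues uniquely to a family $(a_s) \in \prod_s M_{n (s)}$ with $\sup_s \| a_s \| \leq 1,$ which by~(\ref{P:1-19A-1}) is an element of $M (A)$ of norm $\leq 1$ with the required images. The only step requiring genuine work is~(\ref{P:1-19A-1}); everything else is bookkeeping once the seminorms $p_F$ and $q_F$ have been written down.
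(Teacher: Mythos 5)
Your proof is correct and follows essentially the same route as the paper's: identify the \csn s of the \pfpcas\ on $A$ as the seminorms $p_F(a) = \max_{s \in F} \| a_s \|$ over finite $F \subset S$ (the paper gets this from Proposition~\ref{P-IdPfCstComp}, you from the ideal lattice of $\bigoplus_{s} M_{n(s)}$ --- the same computation in this setting), observe that these induce the product topology, and read off the remaining parts. The only difference is that you supply the details, in particular the two-sided verification of $M(A) = \bigl\{ a \in \prod_{s} M_{n(s)} \colon \sup_{s} \| a_s \| < \infty \bigr\}$, which the paper simply declares clear.
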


\begin{proof}
Part~(\ref{P:1-19A-1}) is clear.

Part~(\ref{P:1-19A-2}) for~$A$
follows from Proposition~\ref{P-IdPfCstComp}.
For $M (A),$
by Proposition~\ref{P-IdPfCstComp} and the definition of
the \pcas\  on $M (A),$
the \csn s in this \pcas\  are exactly the seminorms
$p_F \big( (a_{s})_{s \in S} \big) = \sup_{s \in F} \| a_s \|$
for finite sets $F \subset S.$
These define the product topology on $\prod_{s \in S} M_{n (s)},$
proving the statement about $M (A).$

The remaining parts of the proposition are now clear.
\end{proof}

\section{Pro-C*-algebras of profinite groups}\label{Sec:Pfg}

\indent
In this section,
we define the \pca\  of a \pfg~$G,$
and connect it to the \rpn\  theory of~$G.$
The construction makes sense for any locally compact group~$G,$
although in general it reflects the behavior of the \pfcmp\  of~$G$
rather than of~$G.$
We give the construction in this generality,
since we use it in Section~\ref{Sec:Rel}
to compare the \pfpcas\  on $C^* (G)$
with the \pca\  of the \pfcmp\  of~$G.$

We recall the following definition.
(See the beginning of Section~2.1 of~\cite{RZ}.)

\begin{dfn}\label{D:PFG}
A topological group~$G$ is said to be {\emph{profinite}}
if $G$ is topologically isomorphic to an inverse limit
$\varprojlim_{\ld} G_{\ld}$ of an inverse system of finite
groups $G_{\ld},$ in which $G_{\ld}$ is given the discrete
topology for each $\ld \in \Ld.$
\end{dfn}

In particular, a profinite group is compact,
Hausdorff,
and totally disconnected.
Conversely, every compact
Hausdorff
totally disconnected group is profinite.
See Theorem 2.1.3 of~\cite{RZ}.
See~\cite{RZ} for much more on profinite groups.

The following notation is based on Section~3.1 of~\cite{RZ}.

\begin{ntn}\label{N:NG}
For any group $G$ and any subgroup~$H,$
we denote by $[G : H]$ the index of $H$ in~$G.$
For any topological group~$G$
(assuming the discrete topology if no other topology is obvious
or specified),
we let $\CNG$ be the set of closed normal subgroups $N \S G$
such that the index $[G : N]$ is finite.
We order $\CNG$ by reverse inclusion.
\end{ntn}

\begin{rmk}\label{R:NGDir}
Let $G$ be a topological group.
If $M, N \in \CNG$ then also $M \cap N \in \CNG.$
(In fact,
$[G : M \cap N] \leq [G : M] \cdot [G : N].$)
Therefore $\CNG$ is a directed set.
\end{rmk}

The following definition is at the beginning of Section~3.2
of~\cite{RZ}.

\begin{dfn}\label{D-PfCompGp}
Let $G$ be a topological group.
We define its {\emph{profinite completion}} ${\ov{G}}$
to be the inverse limit of the quotients $G / N$ as
$N$ runs through~$\CNG.$
\end{dfn}

The \pfcmp\  ${\ov{G}}$ is obviously a profinite group.

\begin{ntn}\label{N:NBar}
Let $G$ be a topological group.
There is an obvious \ct\  \hm\  from $G$ to~${\ov{G}},$
which we denote by $\gm_G.$
For $N \in \CNG,$
we write ${\ov{N}}$ for the closure ${\ov{\gm_G (N)}}$
of the image of $N$ in~${\ov{G}}.$
\end{ntn}

\begin{lem}\label{R:NCorr}
Let $G$ be a topological group.
Then $N \mapsto {\ov{N}}$
defines a bijection from $\CNG$ to~$\CNGb.$
Moreover,
for every $N \in \CNG,$ the map $\gm_G$ induces an isomorphism
$G / N \to {\ov{G}} \big/ {\ov{N}}.$
\end{lem}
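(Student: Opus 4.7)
The plan is to produce, for each $N \in \CNG,$ a continuous surjection $\ov{\pi}_N \colon \ov{G} \to G/N$ whose kernel is exactly $\ov{N}.$ Such an $\ov{\pi}_N$ is given by the canonical projection from the inverse limit $\ov{G} = \invlim_{M \in \CNG} G/M$ onto the $N$-coordinate, and by construction $\ov{\pi}_N \circ \gm_G$ is the quotient map $\pi_N \colon G \to G/N.$ Hence $\ov{\pi}_N$ is surjective, and the containment $\gm_G (N) \subset \ker (\ov{\pi}_N),$ combined with closedness of the kernel, yields $\ov{N} \subset \ker (\ov{\pi}_N).$

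The substantive step, where I expect the only real work to lie, is the reverse inclusion. A basic neighborhood of a point $x \in \ov{G}$ in the inverse-limit topology is determined by specifying its $M$-coordinate for finitely many $M \in \CNG,$ so given $x \in \ker (\ov{\pi}_N)$ and a finite $F \subset \CNG$ containing~$N,$ I set $K = \bigcap_{M \in F} M,$ which lies in $\CNG$ by Remark~\ref{R:NGDir}, and choose $g \in G$ whose image in $G / K$ equals the $K$-coordinate of~$x.$ Compatibility of the inverse system then forces $g M = x_M$ for every $M \in F,$ and in particular $g N = x_N = N,$ whence $g \in N.$ Thus $\gm_G (g)$ lies in the chosen neighborhood of~$x,$ showing $x \in \ov{\gm_G (N)} = \ov{N}.$

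With $\ov{N} = \ker (\ov{\pi}_N)$ established, $\ov{\pi}_N$ descends to an isomorphism $\ov{G} / \ov{N} \to G / N$ whose inverse is induced by $\gm_G,$ which is the second assertion and in particular shows $\ov{N} \in \CNGb.$ Injectivity of $N \mapsto \ov{N}$ now follows from
\[
\gm_G^{-1} ( \ov{N} ) = \gm_G^{-1} ( \ker ( \ov{\pi}_N ) ) = \ker ( \pi_N ) = N.
\]
For surjectivity, given $M \in \CNGb$ I set $N = \gm_G^{-1} (M),$ which is closed and normal in~$G.$ Since $M$ is closed of finite index in the compact group~$\ov{G},$ it is also open, so $\ov{G} / M$ is finite and discrete, and the map $G \to \ov{G} / M$ obtained by composing $\gm_G$ with the quotient has dense (hence full) image, giving $G / N \cong \ov{G} / M$ and confirming $N \in \CNG.$ Combined with the isomorphism $\ov{G} / \ov{N} \cong G / N$ from the second assertion and the trivial inclusion $\ov{N} \subset M,$ an equality of finite indices in $\ov{G}$ forces $\ov{N} = M.$
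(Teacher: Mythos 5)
Your argument is correct and complete. Note, though, that the paper does not actually prove this lemma: it simply cites parts (a), (b), and (d) of Proposition 3.2.2 of~\cite{RZ}, so what you have written is a self-contained substitute for that reference rather than a variant of an argument in the paper. The key steps all check out: the identification $\ov{N} = \ker(\ov{\pi}_N)$ (the reverse inclusion is exactly the density argument for $\gm_G(G)$ in $\ov{G}$, localized to the fibre over the identity coset, and it correctly uses Remark~\ref{R:NGDir} to pass to the common refinement $K$); openness of $\ov{N}$ follows from continuity of $\ov{\pi}_N$ into the discrete group $G/N$, so the induced bijection $\ov{G}/\ov{N} \to G/N$ is automatically a homeomorphism; normality of $\ov{N}$ in all of $\ov{G}$ (not just in $\ov{\gm_G(G)}$) is correctly obtained from its description as a kernel rather than from the definition as a closure; and the finite-index counting argument that closes the surjectivity step is sound, since $\ov{N} \subset M$ with $[\ov{G}:\ov{N}] = |G/N| = [\ov{G}:M]$ forces equality. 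One small point worth making explicit: surjectivity of $\ov{\pi}_N$ is not a general property of inverse-limit projections but follows here because $\ov{\pi}_N \circ \gm_G = \pi_N$ is already surjective, which is how you use it.
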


\begin{proof}
See parts (a), (b), and~(d) of Proposition 3.2.2 of~\cite{RZ}.
\end{proof}

We recall that a topological group~$G$
is called {\emph{residually finite}}
if the intersection
of the closed normal subgroups of finite index in~$G$
is $\{ 1 \}.$
Clearly, then,
the map $G \to {\ov{G}}$ is injective \ifo\  $G$
is residually finite.

\begin{rmk}\label{R:UGMG}
We recall that if $G$ is a locally compact group,
then there is a
standard \ct\  unital \hm\  from the measure algebra $M (G)$
to the multiplier algebra $M (C^* (G))$
which extends the \hm\  $L^1 (G) \to C^* (G)$
coming from the definition of $C^* (G)$
as the universal enveloping \ca\  of $L^1 (G).$
(See 7.1.5 of~\cite{Pd1}.)
For each $g \in G,$ there is then a unitary $u_g \in M (C^* (G))$
obtained as the image of the point mass measure at~$g,$
regarded as an element of $M (G).$

We further recall the correspondence between unitary representations
of~$G$ and nondegenerate representations of $C^* (G).$
(See Proposition 7.1.4 of~\cite{Pd1} and its proof.)
Let $\mu$ be a left Haar measure on~$G.$
If $v \mapsto v_g \colon G \to U (H)$ is a unitary representation of~$G$
on a Hilbert space~$H,$
then the corresponding representation $\pi \colon C^* (G) \to L (H)$
is defined on $L^1 (G)$ by the formula, for $\xi, \et \in H,$
\[
\langle \pi (f) \xi, \et \rangle
  = \int_G f (g) \langle v_g \xi, \et \rangle \, d \mu (g).
\]
Given a nondegenerate representation~$\pi,$
we recover $v$ as follows.
First let ${\wt{\pi}}$ be the unique
extension to a unital \hm\  from $M (C^* (G))$ to $L (H),$
as in Theorem~\ref{T:MultAlg}(\ref{T:MultAlg-Rep}).
Then, with $u_g$ as in the previous paragraph,
we have $v_g = {\wt{\pi}} (u_g).$
\end{rmk}

\begin{lem}\label{L:ToFiniteGp}
Let $G$ be a locally compact group, let $F$ be a finite group,
and let $\rh \colon G \to F$ be a surjective continuous \hm.
Then there is a
unique surjective \hm\  $\pi \colon C^* (G) \to C^* (F)$
whose extension to a \hm\  ${\wt{\pi}} \colon M (C^* (G)) \to C^* (F)$
satisfies, in the notation of Remark~\ref{R:UGMG},
the relation ${\wt{\pi}} (u_g) = u_{\rh (g)}$ for all $g \in G.$
For $f \in C_{\mathrm{c}} (G),$
and with $\mu$ being a left Haar measure on~$G,$
it is given by the formula
\begin{equation}\label{Eq:FormulaForPi}
\pi (f)
 = \sum_{x \in F} \left( \int_{\rh^{-1} (x)} f \, d \mu \right) u_x.
\end{equation}
\end{lem}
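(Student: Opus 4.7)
The plan is to exploit the correspondence of Remark~\ref{R:UGMG} between unitary representations of~$G$ and nondegenerate representations of $C^*(G).$ The idea is to regard $g \mapsto u_{\rh(g)}$ as a unitary representation of $G$ taking values in $C^*(F),$ realize it concretely on a Hilbert space by composing with a faithful representation of $C^*(F),$ integrate, and then check that the resulting \hm\ from $C^*(G)$ to $L(H)$ actually lands in~$C^*(F),$ is surjective, and has the required multiplier behavior.

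Concretely, I would fix a faithful nondegenerate representation $\sm \colon C^*(F) \to L(H)$ (which exists because $C^*(F)$ is \fd). Since $F$ is discrete and $\rh$ is \ct, the map $v \colon G \to U(H)$ given by $v_g = \sm(u_{\rh(g)})$ is locally constant, hence a strongly \ct\ unitary representation of~$G.$ By Remark~\ref{R:UGMG}, $v$ integrates to a unique nondegenerate representation $\pi_0 \colon C^*(G) \to L(H),$ whose extension ${\wt{\pi_0}} \colon M(C^*(G)) \to L(H)$ from Theorem~\ref{T:MultAlg}(\ref{T:MultAlg-Rep}) satisfies ${\wt{\pi_0}}(u_g) = v_g = \sm(u_{\rh(g)}).$ Writing $G$ as the disjoint union of the (open) fibers $\rh^{-1}(x),$ $x \in F,$ and applying the integral formula of Remark~\ref{R:UGMG}, for every $f \in C_{\mathrm{c}}(G)$ one obtains
\[
\pi_0(f)
  = \sum_{x \in F}
      \left( \int_{\rh^{-1}(x)} f \, d\mu \right) \sm(u_x)
  \in \sm(C^*(F)).
\]
Because $\sm(C^*(F))$ is \fd\ (hence closed in $L(H)$) and $C_{\mathrm{c}}(G)$ is dense in $C^*(G),$ this forces $\pi_0(C^*(G)) \subseteq \sm(C^*(F)),$ and setting $\pi = \sm^{-1} \circ \pi_0$ yields a \hm\ $\pi \colon C^*(G) \to C^*(F)$ satisfying~(\ref{Eq:FormulaForPi}).

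For surjectivity, since $\rh$ is \ct\ and surjective and $F$ is discrete, every fiber $\rh^{-1}(x)$ is nonempty and open, so Haar measure is positive on it; choose $f_x \in C_{\mathrm{c}}(G)$ with $\supp(f_x) \subseteq \rh^{-1}(x)$ and $\int f_x \, d\mu = 1$ to get $\pi(f_x) = u_x.$ Since $\{ u_x : x \in F \}$ spans $C^*(F),$ the map $\pi$ is surjective, and Theorem~\ref{T:MultAlg}(\ref{T:MultAlg-Surj}) provides the unique unital extension ${\wt{\pi}} \colon M(C^*(G)) \to C^*(F).$ Then $\sm \circ {\wt{\pi}}$ is a unital $*$-\hm\ $M(C^*(G)) \to L(H)$ extending $\pi_0,$ so uniqueness in Theorem~\ref{T:MultAlg}(\ref{T:MultAlg-Rep}) gives $\sm \circ {\wt{\pi}} = {\wt{\pi_0}};$ evaluating at $u_g$ and using injectivity of $\sm$ yields ${\wt{\pi}}(u_g) = u_{\rh(g)}.$ For uniqueness of $\pi,$ any other surjective \hm\ $\pi'$ with the same property makes $\sm \circ \pi'$ a nondegenerate representation of $C^*(G)$ on~$H$ whose associated unitary representation of $G$ is again~$v,$ so the bijection in Remark~\ref{R:UGMG} forces $\sm \circ \pi' = \pi_0 = \sm \circ \pi,$ whence $\pi' = \pi.$

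The main technical point is the bookkeeping between the Hilbert-space formulation of Remark~\ref{R:UGMG} and the target algebra $C^*(F);$ this is handled by threading everything through the faithful representation $\sm$ and invoking the uniqueness clauses in Theorem~\ref{T:MultAlg}.
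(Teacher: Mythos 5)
Your proposal is correct and follows essentially the same route as the paper's proof: realize $C^*(F)$ faithfully on a Hilbert space, integrate the locally constant unitary representation $g \mapsto u_{\rh(g)}$ via Remark~\ref{R:UGMG}, verify the explicit formula to see that the image lands in $C^*(F)$, and obtain surjectivity from bump functions supported on the open fibers $\rh^{-1}(x)$. The only difference is one of detail: the paper works with a fixed embedding into $L(l^2(F))$ and leaves the continuity, uniqueness, and multiplier bookkeeping implicit, whereas you spell these steps out.
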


\begin{proof}
We may assume that $C^* (F)$ is a unital subalgebra of
$L (H)$ for some \fd\  Hilbert space~$H.$
Then everything follows from Remark~\ref{R:UGMG}
except $\pi ( C^* (G) ) = C^* (F)$
and~(\ref{Eq:FormulaForPi}).
For~(\ref{Eq:FormulaForPi}),
let $\ph (f)$ denote the right hand side;
then a calculation shows that for all $\xi, \et \in l^2 (F),$
we have
$\langle \ph (f) \xi, \, \et \rangle
   = \langle \pi (f) \xi, \, \et \rangle.$
It is
now clear that $\pi (f) \in C^* (F).$
Therefore $\pi (a) \in C^* (F)$ for all $a \in C^* (G).$

It remains only to prove that $\pi$ is surjective.
It is enough to show that for every $x \in F,$
the unitary $u_x \in C^* (F)$ is in the range of~$\pi.$
Set $W = \rh^{-1} (x),$
which is a nonempty open and closed subset of~$G.$
Then there is $f \in C_{\mathrm{c}} (G)$ such that
$\supp (f) \S W$ and $\int_G f \, d \mu = 1.$
The formula for $\pi (f)$ implies that $\pi (f) = u_x.$
\end{proof}

\begin{lem}\label{L:Compat}
Let $G$ be a locally compact group, let $F_1$ and $F_2$
be finite groups,
and let
\[
\rh_1 \colon G \to F_1,
\,\,\,\,\,\,
\rh_2 \colon G \to F_2,
\andeqn
\gm \colon F_1 \to F_2
\]
be surjective continuous \hm s such that $\rh_2 = \gm \circ \rh_1.$
Let
\[
\pi_1 \colon C^* (G) \to C^* (F_1),
\,\,\,\,\,\,
\pi_2 \colon C^* (G) \to C^* (F_2)
\andeqn
\ph \colon C^* (F_1) \to C^* (F_2)
\]
be the \hm s of Lemma~\ref{L:ToFiniteGp},
and let ${\wt{\pi}}_1$ and ${\wt{\pi}}_2$
be the extensions of $\pi_1$ and $\pi_2$
as in Theorem~\ref{T:MultAlg}(\ref{T:MultAlg-Rep}).
Then $\pi_2 = \ph \circ \pi_1$
and ${\wt{\pi}}_2 = \ph \circ {\wt{\pi}}_1.$
\end{lem}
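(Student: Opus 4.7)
The plan is to reduce both equalities to the uniqueness clauses already proved in Lemma~\ref{L:ToFiniteGp} and Theorem~\ref{T:MultAlg}(\ref{T:MultAlg-Surj}). Concretely, I set $\ps := \ph \circ \pi_1 \colon C^* (G) \to C^* (F_2),$ and I will show that $\ps$ satisfies the unique characterization of $\pi_2$ given by Lemma~\ref{L:ToFiniteGp}, whence $\ps = \pi_2$; the identity ${\wt{\pi}}_2 = \ph \circ {\wt{\pi}}_1$ will then follow from uniqueness of the multiplier extension.

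First I note that all three of $\rh_1, \rh_2, \gm$ are surjective: for~$\gm$, observe $F_2 = \rh_2 (G) = \gm (\rh_1 (G)) = \gm (F_1).$ Hence Lemma~\ref{L:ToFiniteGp} indeed yields $\pi_1, \pi_2, \ph$ all surjective, so $\ps = \ph \circ \pi_1$ is a surjective \hm. Next, by Theorem~\ref{T:MultAlg}(\ref{T:MultAlg-Surj}) applied to~$\ps,$ there is a unique \hm\ ${\wt{\ps}} \colon M (C^* (G)) \to M (C^* (F_2))$ extending~$\ps.$ On the other hand, $\ph \circ {\wt{\pi}}_1$ is a \hm\ $M (C^* (G)) \to C^* (F_2)$ which restricts to $\ph \circ \pi_1 = \ps$ on $C^* (G)$ (using that $M (C^* (F_1)) = C^* (F_1)$ and $M (C^* (F_2)) = C^* (F_2)$ since $F_1, F_2$ are finite). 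By uniqueness, ${\wt{\ps}} = \ph \circ {\wt{\pi}}_1.$

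Finally, for $g \in G,$ applying Lemma~\ref{L:ToFiniteGp} to~$\gm$ (so that the extension $\wt{\ph} = \ph$ satisfies $\ph (u_y) = u_{\gm (y)}$ for $y \in F_1$) gives
\[
{\wt{\ps}} (u_g) = \ph ( {\wt{\pi}}_1 (u_g) ) = \ph ( u_{\rh_1 (g)} ) = u_{\gm (\rh_1 (g))} = u_{\rh_2 (g)}.
\]
Thus $\ps$ is a surjective \hm\ $C^* (G) \to C^* (F_2)$ whose multiplier extension sends $u_g$ to $u_{\rh_2 (g)}$ for every $g \in G,$ so the uniqueness clause in Lemma~\ref{L:ToFiniteGp} forces $\ps = \pi_2.$ Taking multiplier extensions and invoking uniqueness in Theorem~\ref{T:MultAlg}(\ref{T:MultAlg-Surj}) then gives ${\wt{\pi}}_2 = {\wt{\ps}} = \ph \circ {\wt{\pi}}_1.$ There is no substantive obstacle here---the argument is just bookkeeping with the two uniqueness statements; the only point requiring a moment's care is recognizing that because $F_1, F_2$ are finite, the multiplier algebras collapse, so $\ph \circ {\wt{\pi}}_1$ genuinely takes values in $C^* (F_2) = M (C^* (F_2))$ and is eligible to play the role of ${\wt{\ps}}.$
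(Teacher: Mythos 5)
Your argument is correct and is essentially the paper's own proof: both reduce to the uniqueness clause of Lemma~\ref{L:ToFiniteGp} by checking that $\ph \circ \pi_1$ is surjective and that its multiplier extension sends $u_g$ to $u_{\rh_2 (g)}$ (using $M (C^* (F_i)) = C^* (F_i)$), and then get the extension identity from uniqueness of multiplier extensions. The only difference is cosmetic --- you invoke uniqueness via Theorem~\ref{T:MultAlg}(\ref{T:MultAlg-Surj}) where the paper cites part~(\ref{T:MultAlg-Rep}), but the two give the same extension here.
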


\begin{proof}
Clearly $\pi_2$ and $\ph \circ \pi_1$ are both surjective.
Using the fact that $M (C^* (F_1)) = C^* (F_1)$
(since $C^* (F_1)$ is unital),
we check that $\pi_2 (u_g) = (\ph \circ \pi_1) (u_g)$
for all $g \in G.$
So $\pi_2 = \ph \circ \pi_1$
by the uniqueness statement in
Lemma~\ref{L:ToFiniteGp}.
That ${\wt{\pi}}_2 = \ph \circ {\wt{\pi}}_1$ now follows from
the uniqueness statement
in Theorem~\ref{T:MultAlg}(\ref{T:MultAlg-Rep}).
\end{proof}

\begin{prp}\label{P:BddIsMult}
Let $G$ be any compact group.
Let ${\ov{C^* (G)}}$ be the completion of $C^* (G)$
in its \pfpcas.
Then the \ca\  $B$ of bounded elements of ${\ov{C^* (G)}}$
(in the sense of Definition~1.10 of~\cite{Ph1})
is unital and contains $C^* (G)$ as an ideal,
and the induced map $M (C^* (G)) \to B$ is an isomorphism of \ca s.
Moreover, $M (C^* (G))$ is \rfd.
\end{prp}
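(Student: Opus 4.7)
The plan is to reduce to Proposition~\ref{P:1-19A} via the Peter-Weyl decomposition of $C^*(G).$ Since $G$ is compact, every irreducible unitary representation of $G$ is \fd, and the standard identification gives $C^*(G) \cong \bigoplus_{s \in \wh{G}} M_{n(s)}$ as a C*-algebraic direct sum, where $n(s) = \dim(\pi_s)$ and $\wh{G}$ denotes the unitary dual. This puts $C^*(G)$ exactly in the form $A = \bigoplus_{s \in S} M_{n(s)}$ considered in Proposition~\ref{P:1-19A}, with $S = \wh{G}.$

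Next I would verify that the \fd\ irreducible representations of this $A$ are (up to unitary equivalence) exactly the coordinate evaluations at each $s \in S.$ Proposition~\ref{P-IdPfCstComp} (equivalently Proposition~\ref{P:1-19A}(\ref{P:1-19A-3})) then identifies $\ov{C^*(G)}$ with $\prod_{s \in \wh{G}} M_{n(s)},$ and Proposition~\ref{P:1-19A}(\ref{P:1-19A-4}) together with~(\ref{P:1-19A-1}) identifies the \ca\ $B$ of bounded elements of $\ov{C^*(G)}$ with $\{ a \in \prod_s M_{n(s)} \colon \sup_s \| a_s \| < \I \} = M(A) = M(C^*(G)).$

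The canonical \hm\ $M(C^*(G)) \to \ov{C^*(G)}$ of Proposition~\ref{P:MultRFD} is injective because $C^*(G)$ is \rfd, and by the identifications above its image is exactly~$B,$ so it induces an isomorphism $M(C^*(G)) \cong B$ of \ca s. Unitality of $B$ and the fact that $C^*(G)$ sits as an ideal in $B$ then follow from the corresponding general properties of the multiplier algebra. Finally, $M(C^*(G))$ is \rfd\ by Theorem~\ref{T:MultAlg}(\ref{T:MultAlg-Rfd}), since $C^*(G)$ is \rfd; alternatively, the coordinate evaluations on $\prod_s M_{n(s)}$ restrict to a faithful family of \fd\ representations of $M(C^*(G)).$

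There is no deep obstacle: once Peter-Weyl is invoked, the argument is essentially bookkeeping. The only step that might need a sentence of care is the observation that the unitary equivalence classes of \fd\ irreducible representations of $C^*(G)$ are precisely indexed by $\wh{G}$ via the coordinate evaluations, so that Propositions~\ref{P-IdPfCstComp} and~\ref{P:1-19A} apply in the form stated.
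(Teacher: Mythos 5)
Your proposal is correct and follows essentially the same route as the paper: the paper's proof likewise invokes the Peter--Weyl decomposition $C^*(G) = \bigoplus_{s \in S} M_{n(s)}$ (valid since all irreducible representations of a compact group are finite dimensional) and then cites Proposition~\ref{P:1-19A} for all the stated conclusions. The extra details you supply --- identifying the coordinate evaluations as the finite dimensional irreducible representations and checking injectivity of the map $M(C^*(G)) \to {\ov{C^*(G)}}$ via Proposition~\ref{P:MultRFD} --- are exactly the bookkeeping the paper leaves implicit.
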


In particular, the \pfpcas\  of $M (C^* (G))$
is full in the sense of Definition~\ref{D:Full}(\ref{D-Full-1}).

\begin{proof}[Proof of Proposition~\ref{P:BddIsMult}]
Since all irreducible representations of $G$ are \fd,
there are an index set $S$ and numbers $n (s) \in \N$ such that
we can write $C^* (G)$ as a direct sum
$C^* (G) = \bigoplus_{s \in S} M_{n (s)}.$
The result now follows from Proposition~\ref{P:1-19A}.
\end{proof}

\begin{dfn}\label{D:ProCStPfGp}
Let $G$ be a locally compact group.
For each $N \in \CNG$ (Notation~\ref{N:NG}),
we define a C*~seminorm $\| \cdot \|_N$ on $C^* (G)$
by letting $\kp_N \colon C^* (G) \to C^* (G / N)$
be the \hm\  obtained from Lemma~\ref{L:ToFiniteGp} using
the quotient map,
and setting $\| a \|_N = \| \kp_N (a) \|.$
We define the {\emph{pro-C*-algebra}} of~$G$ to be $C^* (G)$
equipped with the \pca\  structure consisting of the
seminorms $\| \cdot \|_N$ as $N$ runs through~$\CNG.$
Letting ${\wt{\kp}}_N \colon M (C^* (G)) \to C^* (G / N)$
be the extension as in Theorem~\ref{T:MultAlg}(\ref{T:MultAlg-Rep}),
we further define the
{\emph{multiplier pro-C*-algebra}} of~$G$ to be $M (C^* (G))$
equipped with the \pca\  structure consisting of the
seminorms $a \mapsto \| {\wt{\kp}}_N (a) \|$
as $N$ runs through~$\CNG.$
\end{dfn}

Before proving elementary facts about these \pcas s,
we give a lemma which will be used again later.

\begin{lem}\label{L:FinRange}
Let $G$ be a profinite group.
Let $v$ be a unitary representation of~$G$
on a \fd\  Hilbert space~$H.$
Then there is $N \in \CNG$
and a unitary representation $w$ of~$G / N$ on~$H$
such that $v_g = w_{g N}$ for all $g \in G.$
\end{lem}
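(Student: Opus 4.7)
The plan is to show that $\ker(v)$ contains some $N \in \CNG$, so that $v$ descends to $G/N$. The essential point is to combine two facts: first, that the unitary group $U(H)$ of a finite-dimensional Hilbert space is a Lie group, and hence has no small subgroups (there is an open neighborhood of $1 \in U(H)$ containing no nontrivial subgroup); and second, that the open normal subgroups of finite index form a neighborhood basis of the identity in any profinite group, which follows from Definition~\ref{D:PFG} and standard facts (see Section~2.1 of~\cite{RZ}).

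So, first I would choose an open neighborhood $V$ of $1 \in U(H)$ with the property that the only subgroup of $U(H)$ contained in $V$ is $\{1\}$; such a $V$ exists, for instance by taking $V$ to be the image of a small open ball in the Lie algebra under the exponential map. By continuity of $v$, the preimage $v^{-1}(V)$ is an open neighborhood of~$1$ in~$G$. Since $G$ is profinite, I may choose $N \in \CNG$ with $N \subset v^{-1}(V)$. Then $v(N)$ is a subgroup of $U(H)$ contained in~$V$, so $v(N) = \{1\}$; equivalently, $N \subset \ker(v)$.

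It follows that the formula $w_{gN} = v_g$ gives a well-defined map $w \colon G/N \to U(H)$, which is a group homomorphism because $v$ is, and which is continuous since $G/N$ is discrete. Setting this aside, $w$ satisfies $v_g = w_{gN}$ for every $g \in G$, as required.

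I expect the only subtle step to be the no-small-subgroups property of~$U(H)$, but for finite-dimensional $H$ this is classical and immediate from the local diffeomorphism between $U(H)$ and its Lie algebra near the identity: any subgroup contained in a sufficiently small symmetric neighborhood of $1$ must lie in the image of a neighborhood of $0$ on which the logarithm is defined, and then a rescaling argument using the exponential forces all its elements to be~$1$.
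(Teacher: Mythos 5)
Your argument is correct and is essentially the paper's own proof: both choose a no-small-subgroups neighborhood of $1$ in $U(H)$, pull it back along $v$, and use that the open normal subgroups of finite index in a profinite group form a neighborhood basis of the identity to find $N \in \CNG$ killed by $v$. The extra detail you give on why $U(H)$ has no small subgroups is fine but not needed beyond what the paper assumes.
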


\begin{proof}
Choose an open set $W$ of the unitary group $U (H)$
such that $W$ contains no subgroups of $U (H)$ other than $\{ 1 \}.$
Let
\[
V = \{ g \in G \colon v_g \in W \}.
\]
Then $V$ is an open subset of~$G.$
Since $G$ is profinite,
there is $N \in \CNG$ such that $N \subset V.$
Since $W$ contains no nontrivial subgroups,
it follows that $v_g = 1$ for all $g \in N.$
Therefore $v$ induces a representation
% ${\ov{v}}$
$w$ of $G / N$ on~$H.$
\end{proof}

\begin{prp}\label{L:IsPCSStr}
Let $G$ be a locally compact group.
\begin{enumerate}
\item\label{L:IsPCSStr-1}
The collection of seminorms $\| \cdot \|_N$
of \Def{D:ProCStPfGp} is a \pcas\  on $C^* (G),$
and the collection of seminorms
$a \mapsto \| {\wt{\kp}}_N (a) \|$
of \Def{D:ProCStPfGp} is a \pcas\  on $M (C^* (G)).$
\item\label{L:IsPCSStr-2}
Suppose, in addition,
that $G$ is profinite.
Then the \pcas\  on $C^* (G)$
is faithful and is equivalent (\Def{D:EqStruct})
to the \pfpcas\  (\Def{D:Pfpca}) of $C^* (G).$
The \pcas\  on $M (C^* (G))$ is faithful and full.
\end{enumerate}
\end{prp}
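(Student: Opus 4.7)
The plan is to verify all parts by combining Peter--Weyl for the compact group~$G$ with Lemma~\ref{L:FinRange}, which lets us factor \fd\ unitary representations of a \pfg\ through finite quotients. For part~(\ref{L:IsPCSStr-1}), which needs no profiniteness, the index set $\CNG$ is directed (Remark~\ref{R:NGDir}), so only monotonicity must be checked. If $N_1, N_2 \in \CNG$ with $N_1 \supseteq N_2$, the quotient maps $\rh_i \colon G \to G/N_i$ satisfy $\rh_1 = \gm \circ \rh_2$ for the natural surjection $\gm \colon G/N_2 \to G/N_1$. Lemma~\ref{L:Compat} then supplies a surjective *-\hm\ $\ph \colon C^*(G/N_2) \to C^*(G/N_1)$ with $\kp_{N_1} = \ph \circ \kp_{N_2}$ and $\wt{\kp}_{N_1} = \ph \circ \wt{\kp}_{N_2}$; since *-\hm s of \ca s are contractive, $\|\cdot\|_{N_1} \leq \|\cdot\|_{N_2}$ on $C^*(G)$ and the analogous inequality holds for the multiplier seminorms.

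For part~(\ref{L:IsPCSStr-2}), now assume $G$ is profinite (hence compact). Each $\|\cdot\|_N$ has \fd\ quotient $C^*(G/N)$, so it belongs to the \pfpcas. Conversely, let $p$ be any \csn\ on $C^*(G)$ with $C^*(G)/\ker p$ \fd. Composing the quotient map with a faithful \fd\ representation and restricting to the essential subspace produces a nondegenerate \fd\ representation $\pi$ of $C^*(G)$ with $\ker \pi = \ker p$; by Remark~\ref{R:UGMG}, $\pi$ corresponds to a unitary \rpn\ $v$ of $G$ on a \fd\ space, and Lemma~\ref{L:FinRange} supplies $N \in \CNG$ on which $v$ is trivial. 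Thus $\pi$ factors through $\kp_N$, whence $\ker \kp_N \subseteq \ker p$ and $p \leq \|\cdot\|_N$. This establishes equivalence of the two topologies. For faithfulness, Peter--Weyl gives $C^*(G) \cong \bigoplus_{s \in S} M_{n(s)}$, where $S$ indexes the irreducible unitary representations of $G$; hence $C^*(G)$ is \rfd\ and Lemma~\ref{R:PfcmpIsPca}(\ref{R:PfcmpIsPca-3}) yields faithfulness of the \pfpcas, hence of the equivalent \pcas.

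It remains to treat $M(C^*(G))$. Proposition~\ref{P:1-19A} identifies $M(C^*(G))$ with $\prod_{s \in S} M_{n(s)}$. For $N \in \CNG$, let $S_N \subseteq S$ be the set of $s$ for which the irreducible unitary \rpn\ $v^{(s)}$ factors through $G/N$; then $C^*(G/N) \cong \bigoplus_{s \in S_N} M_{n(s)}$ and $\wt{\kp}_N$ is the coordinate projection onto the $S_N$-factors. Lemma~\ref{L:FinRange} gives $\bigcup_N S_N = S$. Faithfulness is immediate: $\wt{\kp}_N(a) = 0$ for all $N$ forces $a_s = 0$ for every $s \in S$. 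For fullness, a consistent family $(a_N)_N$ with $\|a_N\| \leq 1$ unambiguously prescribes each coordinate as $a_s := (a_N)_s$ for any $N$ with $s \in S_N$ (well-defined by consistency), and the resulting $a \in \prod_s M_{n(s)}$ satisfies $\|a\| = \sup_s \|a_s\| \leq 1$ together with $\wt{\kp}_N(a) = a_N$ for every~$N$. The step requiring the most care is the identification of $\wt{\kp}_N$ as a coordinate projection, i.e., verifying that an irreducible \rpn\ of $C^*(G)$ descends to $C^*(G/N)$ precisely when the corresponding unitary of $G$ is trivial on~$N$; this is where Lemma~\ref{L:ToFiniteGp} and the bijection in Remark~\ref{R:UGMG} do the geometric work behind the otherwise routine bookkeeping above.
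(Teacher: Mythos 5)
Your proof is correct and follows essentially the same route as the paper: part~(\ref{L:IsPCSStr-1}) via Lemma~\ref{L:Compat} and directedness of $\CNG$, and the equivalence in part~(\ref{L:IsPCSStr-2}) by factoring a finite dimensional representation of $C^* (G)$ through a finite quotient of~$G$ using Lemma~\ref{L:FinRange}. The only difference is cosmetic: where the paper disposes of faithfulness and fullness by citing Proposition~\ref{P:BddIsMult}, you unpack that citation and verify both properties coordinatewise via the Peter--Weyl decomposition and Proposition~\ref{P:1-19A}, which is exactly the content of the cited result.
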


\begin{proof}
We prove~(\ref{L:IsPCSStr-1}).
\Lem{L:Compat} implies that if $N_1, N_2 \in \CNG$
satisfy $N_2 \S N_1,$
then $\| a \|_{N_1} \leq \| a \|_{N_2}$ for all $a \in C^* (G).$
Moreover, $\CNG$ is directed by Remark~\ref{R:NGDir}.
Thus, we have \pca\  structures on $C^* (G)$ and $M (C^* (G)).$

Now assume that $G$ is profinite.

We first prove that the \pca\  structure on $C^* (G)$ is equivalent to
the one defining the profinite completion of $C^* (G).$
Since the \ca s of finite groups are \fd,
it suffices to prove that if $p$ is a \csn\  on $C^* (G)$
such that $C^* (G) / \ker (p)$ is \fd,
then there is closed normal subgroup~$N$ of finite index in~$G$
such that $\| \cdot \|_N \geq p.$

Represent $C^* (G) / \ker (p)$ unitally and faithfully
on a \fd\  Hilbert space~$H.$
Thus, we have a \hm\  $\pi \colon C^* (G) \to L (H)$
whose range contains~$1$
and such that $p (a) = \| \pi (a) \|$ for all $a \in C^* (G).$
Then $\pi$ comes from a unitary representation
$v \mapsto v_g$ of~$G$ on~$H.$
Let $N$ and $w \colon G / N \to L (H)$
be as in \Lem{L:FinRange}.
Let $\ps \colon C^* (G / N) \to L (H)$ be the corresponding
representation of $C^* (G / N).$
Then $\ps \circ \kp_N$ and $\pi$
are both nondegenerate \hm s from $C^* (G)$ to $L (H)$
whose extensions to \hm s $M (C^* (G)) \to L (H)$
send $u_g$ to~$v_g$ for all $g \in G.$
Therefore $\ps \circ \kp_N = \pi.$
For all $a \in A,$
we thus have $\| \kp_N (a) \| \geq \| \pi (a) \| = p (a).$
This proves the equivalence of \pca\  structures.

The rest of~(\ref{L:IsPCSStr-2})
now follows from Proposition~\ref{P:BddIsMult}.
\end{proof}

The following is the analog for profinite groups of
Proposition~\ref{P:PfcmpUniv}.

\begin{prp}\label{P:UnivFinRep}
Let $G$ be a profinite group.
Then a nondegenerate representation $\pi$ of the \pca\  of~$G$
(in the sense of \Def{D:ProCStPfGp})
is \ct\  \ifo\  the image of $G$ under the corresponding
unitary representation of~$G$ is finite.
In particular,
there is a bijective correspondence between
\ct\  nondegenerate representations of the \pca\  of~$G$
and unitary representations of~$G$ with finite range.
\end{prp}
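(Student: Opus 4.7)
The plan is to establish the equivalence in both directions and then derive the ``in particular'' statement from the bijective correspondence in Remark~\ref{R:UGMG}. Throughout, let $\pi \colon C^* (G) \to L (H)$ be a nondegenerate \rpn\ and let $v \colon G \to U (H)$ be the corresponding unitary \rpn, so that $v_g = \wt{\pi} (u_g)$ for all $g \in G.$

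Suppose first that $\pi$ is \ct\ for the \pcas\ of \Def{D:ProCStPfGp}. As in the proof of Proposition~\ref{P:PfcmpUniv}, any \ct\ \hm\ from a \pca\ into a \ca\ is dominated by a single defining seminorm and hence vanishes on its kernel. Because $\kp_N$ is surjective by \Lem{L:ToFiniteGp}, this yields $N \in \CNG$ and a \hm\ $\ps \colon C^* (G / N) \to L (H)$ with $\pi = \ps \circ \kp_N.$ Extending to multipliers and using the relation $\wt{\kp}_N (u_g) = u_{g N}$ built into \Lem{L:ToFiniteGp}, we obtain $v_g = \wt{\pi} (u_g) = \wt{\ps} (u_{g N}).$ Since $G / N$ is finite, the image $v (G)$ is finite.

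For the converse, suppose $v (G)$ is finite and set $N = \ker (v).$ Since $v$ is \ct\ and the finite Hausdorff group $v (G)$ is discrete, $N$ is open in~$G$; in particular $N \in \CNG.$ Then $v$ descends to a unitary \rpn\ $w$ of $G / N$ on~$H,$ and the associated nondegenerate \rpn\ $\ps \colon C^* (G / N) \to L (H)$ satisfies $\wt{\ps} (u_{g N}) = v_g$ for every $g \in G.$ Computing $\wt{\ps \circ \kp_N} (u_g) = \wt{\ps} (u_{g N}) = v_g = \wt{\pi} (u_g)$ and invoking the bijection of Remark~\ref{R:UGMG} for the two nondegenerate \rpn s $\pi$ and $\ps \circ \kp_N$ of $C^* (G)$ forces $\pi = \ps \circ \kp_N.$ Since $\ps$ is contractive as a *-\hm\ between \ca s, $\| \pi (a) \| \leq \| \kp_N (a) \| = \| a \|_N$ for every $a \in C^* (G),$ which is exactly the required continuity. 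The ``in particular'' assertion follows at once: the bijection of Remark~\ref{R:UGMG} restricts, by the equivalence just proved, to a bijection between \ct\ nondegenerate \rpn s of the \pca\ of~$G$ and unitary \rpn s of~$G$ with finite image.

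The main obstacle is the factorization step in the first implication: one must know that continuity in the \pcas\ forces $\pi$ to be dominated by a single defining seminorm $\| \cdot \|_N,$ hence to descend across $\kp_N.$ This is the same general pro-\ca\ principle already used in Proposition~\ref{P:PfcmpUniv}, and once the factorization $\pi = \ps \circ \kp_N$ is in hand, the remainder reduces to bookkeeping through \Lem{L:ToFiniteGp} and the multiplier-algebra extension of Theorem~\ref{T:MultAlg}(\ref{T:MultAlg-Rep}).
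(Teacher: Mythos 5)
Your proof is correct and follows essentially the same route as the paper, which simply asserts that continuity is equivalent to factoring through the C*-algebra of a finite quotient and invokes Remark~\ref{R:UGMG}; you have merely filled in the details (domination by a single seminorm $\|\cdot\|_N$ in one direction, openness of $\ker(v)$ and the contractivity estimate $\|\pi(a)\|\leq\|a\|_N$ in the other). No gaps.
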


\begin{proof}
The first part is just the statement that
$\pi$ is \ct\  \ifo\  it factors through the \ca\  of a finite
quotient of~$G.$

The second part follows from the first and from Remark~\ref{R:UGMG}.
\end{proof}

\section{The relation between the profinite completion of $C^* (G)$
  and the C*-algebra of the profinite completion of~$G$}\label{Sec:Rel}

\indent
In this section, we construct a \hm~$\ph_G$ from
the \ca\  of a locally compact group~$G$ to the completion
of the \pca\  of
its profinite completion~${\ov{G}},$
and we study its properties.
Two questions seem to be interesting:
when is $\ph_G$ injective,
and when is the extension of $\ph_G$ to the \pfcmp\  of $C^* (G)$
surjective or an isomorphism?
The first question is really about injectivity of a
\hm\  $C^* (G) \to M \big( C^* \big( {\ov{G}} \big) \big).$
It thus does not involve \pca s,
although they provide the motivation.
We give a positive answer when $G$ is residually finite
and either discrete amenable or abelian.
Residual finiteness is necessary for fairly trivial reasons,
but we show by example that it is not sufficient.
For the second question,
we prove that the extension is always surjective.
If $G$ is profinite,
then the extension is an isomorphism.
If $G$ is compact, the converse is true,
but there are noncompact groups
for which the extension is an isomorphism.

\begin{prp}\label{P:MapToCstPfc}
Let $G$ be a locally compact group,
with profinite completion ${\ov{G}},$
and let $\gm_G \colon G \to {\ov{G}}$ be the canonical map,
as in Notation~\ref{N:NBar}.
For $N \in \CNG,$
let $\rh_N \colon G \to G / N$ be the quotient map,
and let $\kp_N \colon C^* (G) \to C^* (G / N)$
be the map of \Lem{L:ToFiniteGp}.
Further, following Proposition~\ref{P:BddIsMult},
as applied to~${\ov{G}},$
let
$\sm_N \colon M \big( C^* \big( {\ov{G}} \big) \big)
     \to C^* (G / N)$
be the restriction to $M \big( C^* \big( {\ov{G}} \big) \big)$
of the map ${\ov{C^* \big( {\ov{G}} \big)}} \to C^* (G / N)$
which comes via \Def{D:ProCStPfGp}
from the expression of ${\ov{G}}$
as the inverse limit of the finite quotients $G / N.$
Then there exists a unique \hm\  %
$\ph_G \colon C^* (G) \to M \big( C^* \big( {\ov{G}} \big) \big)$
such that $\sm_N \circ \ph_G = \kp_N$ for all~$N.$
Moreover:
\begin{enumerate}
\item\label{P:MapToCstPfc-1}
$\ph_G$ extends to a \hm\  ${\wt{\ph}}_G$ from
$M (C^* (G))$ to $M \big( C^* \big( {\ov{G}} \big) \big)$
such that, in the notation of Remark~\ref{R:UGMG},
we have ${\wt{\ph}}_G (u_g) = u_{\gm_G (g)}$ for all $g \in G.$
\item\label{P:MapToCstPfc-2}
$\ph_G$ is nondegenerate,
that is,
${\ov{\ph_G (C^* (G)) \cdot C^* \big( {\ov{G}} \big)}}
 = C^* \big( {\ov{G}} \big).$
% \item\label{P:MapToCstPfc-3}
\end{enumerate}
\end{prp}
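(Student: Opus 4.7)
The plan is to assemble $\varphi_G$ componentwise from the $\kappa_N,$ using the realization of $\overline{C^* (\overline{G})}$ as the inverse limit of the $C^* (\overline{G}/\overline{N}).$

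First I would verify that $(\kappa_N)_{N \in \CNG}$ is compatible with this inverse system. For $M \S N$ in~$\CNG,$ the factorization $G \to G/M \to G/N$ together with Lemma~\ref{L:Compat} gives $\lambda_{N,M} \circ \kappa_M = \kappa_N,$ where $\lambda_{N,M} \colon C^* (G/M) \to C^* (G/N)$ is the map produced by Lemma~\ref{L:ToFiniteGp} from the quotient $G/M \to G/N.$ The identifications $C^* (G/N) \cong C^* (\overline{G}/\overline{N})$ of Lemma~\ref{R:NCorr}, combined with Proposition~\ref{L:IsPCSStr}(\ref{L:IsPCSStr-2}) and Remark~\ref{R:Topology}, identify $\{(C^* (G/N), \lambda_{N,M})\}$ with the defining inverse system of $\overline{C^* (\overline{G})}.$ Since $\|\kappa_N (a)\| \leq \|a\|$ for every $N,$ the induced map $a \mapsto (\kappa_N (a))_N$ lands in the subalgebra of bounded elements, which equals $M (C^* (\overline{G}))$ by Proposition~\ref{P:BddIsMult}. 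This gives the required $\varphi_G$ satisfying $\sigma_N \circ \varphi_G = \kappa_N.$ Uniqueness is immediate: the seminorms $a \mapsto \|\sigma_N (a)\|$ form the faithful \pcas\  on $M (C^* (\overline{G}))$ by Proposition~\ref{L:IsPCSStr}(\ref{L:IsPCSStr-2}) applied to~$\overline{G}.$

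For part~(\ref{P:MapToCstPfc-1}), each $\kappa_N$ is surjective onto the unital algebra $C^* (G/N)$ (Lemma~\ref{L:ToFiniteGp}), so Theorem~\ref{T:MultAlg}(\ref{T:MultAlg-Surj}) produces a unique unital extension $\widetilde{\kappa}_N \colon M (C^* (G)) \to C^* (G/N).$ Uniqueness in that theorem propagates the compatibility of $(\kappa_N)$ on $C^* (G)$ to compatibility of $(\widetilde{\kappa}_N)$ on $M (C^* (G)),$ so the construction above yields $\widetilde{\varphi}_G \colon M (C^* (G)) \to M (C^* (\overline{G}))$ with $\sigma_N \circ \widetilde{\varphi}_G = \widetilde{\kappa}_N,$ extending $\varphi_G$ by construction. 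To verify $\widetilde{\varphi}_G (u_g) = u_{\gamma_G (g)},$ I would apply $\sigma_N$: the defining property in Lemma~\ref{L:ToFiniteGp} gives $\widetilde{\kappa}_N (u_g) = u_{\rho_N (g)},$ and under the isomorphism $\overline{G}/\overline{N} \cong G/N$ of Lemma~\ref{R:NCorr} this coincides with $\sigma_N (u_{\gamma_G (g)}),$ so faithfulness of $(\sigma_N)_N$ concludes.

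For part~(\ref{P:MapToCstPfc-2}), I would take a bounded approximate identity $(e_i)$ for $C^* (G)$ with $\|e_i\| \leq 1$ and show $\varphi_G (e_i) b \to b$ for every $b \in C^* (\overline{G}).$ Since $C^* (G/N)$ is \fd\  and unital and $\kappa_N$ is surjective, $\sigma_N (\varphi_G (e_i)) = \kappa_N (e_i) \to 1$ in $C^* (G/N)$ for each~$N.$ Writing $C^* (\overline{G}) = \bigoplus_{s} M_{n (s)}$ as in Proposition~\ref{P:BddIsMult}, Lemma~\ref{L:FinRange} ensures that every coordinate $s$ appears in some $C^* (\overline{G}/\overline{N});$ via Proposition~\ref{P:1-19A} this yields coordinatewise convergence $\varphi_G (e_i) \to 1$ inside $\prod_{s} M_{n (s)}.$ Combined with the uniform bound $\|\varphi_G (e_i)\| \leq 1$ and the fact that each $b \in \bigoplus_{s} M_{n (s)}$ has coordinate norms tending to zero, a standard $\varepsilon / 3$ argument upgrades this to strict convergence, yielding the required nondegeneracy. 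The main obstacle I anticipate is precisely this last step: converting coordinatewise convergence in $\prod_{s} M_{n (s)}$ (together with the uniform bound) into strict convergence in $M (C^* (\overline{G}))$ relies essentially on the direct-sum description of $C^* (\overline{G})$ from Proposition~\ref{P:BddIsMult}, while the remaining steps are routine applications of the inverse-limit machinery already assembled in Lemma~\ref{L:Compat} and Theorem~\ref{T:MultAlg}.
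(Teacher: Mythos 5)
Your construction of $\ph_G$ and your proof of part~(\ref{P:MapToCstPfc-1}) follow essentially the same route as the paper: compatibility of the $\kp_N$ via Lemma~\ref{L:Compat}, landing in the bounded part of the inverse limit, and identifying that with $M \big( C^* \big( {\ov{G}} \big) \big)$ via Proposition~\ref{P:BddIsMult}; the uniqueness argument via faithfulness of the seminorms $a \mapsto \| \sm_N (a) \|$ is the same point phrased slightly differently. Where you genuinely diverge is part~(\ref{P:MapToCstPfc-2}). The paper proves nondegeneracy by an explicit Haar-measure computation: for each $a \in C \big( {\ov{G}} \big)$ constant on cosets of some ${\ov{N}}$, it produces $f \in C_{\mathrm{c}} (G)$ supported in~$N$ with $\int_G f \, d\mu = 1$ and shows $\ph_G (f) a = a$ \emph{exactly}, then observes that such locally constant functions are dense in $C^* \big( {\ov{G}} \big).$ You instead push a bounded approximate identity $(e_i)$ of $C^* (G)$ through $\ph_G,$ note that $\kp_N (e_i) \to 1$ in each unital quotient $C^* (G / N),$ use Lemma~\ref{L:FinRange} to see that every matrix block of $C^* \big( {\ov{G}} \big) = \bigoplus_s M_{n (s)}$ occurs in some $C^* \big( {\ov{G}} / {\ov{N}} \big),$ and then upgrade blockwise convergence to $\ph_G (e_i) b \to b$ using the uniform bound $\| \ph_G (e_i) \| \leq 1$ and the $c_0$-decay of the coordinates of $b.$ This argument is correct: the step you flag as the main obstacle does go through, since for fixed $b$ and $\ep > 0$ only finitely many blocks of $b$ have norm $\geq \ep,$ the finitely many relevant blocks of $\ph_G (e_i)$ converge to~$1,$ and the remaining blocks contribute at most $2 \ep.$ Your route is more abstract and avoids the coset-representative bookkeeping, at the cost of invoking the full block decomposition of $C^* \big( {\ov{G}} \big)$; the paper's route is more computational but yields the slightly stronger fact that a dense subalgebra of $C^* \big( {\ov{G}} \big)$ admits exact left units from $\ph_G (C^* (G)).$
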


\begin{proof}
For each $M \in \CNG,$
let
$\ta_M \colon \invlim_{N \in \CNG} C^* (G / N) \to C^* (G / M)$
be the standard map from the inverse limit.
Using \Lem{L:Compat} to check compatibility of the maps,
we see that there is a unique \hm\  %
$\ps \colon C^* (G) \to \invlim_{N \in \CNG} C^* (G / N)$
such that $\ta_N \circ \ps = \kp_N$ for all~$N.$
Since $C^* (G)$ is a \ca,
the range of $\ps$ lies in the \ca\  of bounded elements
of the inverse limit.
In the notation of Proposition~\ref{P:BddIsMult},
applied to the compact group~${\ov{G}},$
this inverse limit is ${\ov{C^* \big( {\ov{G}} \big)}}.$
It follows from Proposition~\ref{P:BddIsMult}
that the bounded elements can be identified with
$M \big( C^* \big( {\ov{G}} \big) \big).$
Thus we can take $\ph_G$ to be the corestriction of $\ps$
to $M \big( C^* \big( {\ov{G}} \big) \big).$
Uniqueness of~$\ph_G$ is obvious from uniqueness of~$\ps.$

We prove~(\ref{P:MapToCstPfc-1}).
Let ${\wt{\kp}}_N$ be the extension of $\kp_N$ to
a map $M (C^* (G)) \to C^* (G / N).$
It follows from \Lem{L:ToFiniteGp}
that ${\wt{\kp}}_N (u_g) = u_{g N}$ for all $g \in G.$
The maps ${\wt{\kp}}_N$ are also compatible with the inverse system
defining ${\ov{C^* \big( {\ov{G}} \big)}},$
and therefore give a map
${\ov{\ph}}_G \colon M (C^* (G)) \to {\ov{C^* \big( {\ov{G}} \big)}}$
such that ${\ov{\ph}}_G (u_g) = u_{\gm_G (g)}$ for all $g \in G.$
For the same reasons as for $\ph_G,$
the range of ${\ov{\ph}}_G$
is contained in $M \big( C^* \big( {\ov{G}} \big) \big).$

We now prove nondegeneracy (part~(\ref{P:MapToCstPfc-2})).
Let $\mu$ be a left Haar measure on~$G.$
Let ${\ov{\mu}}$ be normalized left Haar measure on~${\ov{G}}.$

We claim that if $M, N \in \CNG$ with $M \S N,$
if $f \in C_{\mathrm{c}} (G)$ is supported in~$N,$
and if $a \in C \big( {\ov{G}} \big)$
is constant on cosets of~${\ov{N}},$
then
\[
\kp_M (f) \sm_M (a) = \left( \int_G f \, d \mu \right) \sm_N (a).
\]
To prove this,
let $S$ be a set of coset representatives for $M$ in~$G.$
Then $\gm_G (S)$ is a set of coset representatives
for ${\ov{M}}$ in~${\ov{G}},$
by Lemma~\ref{R:NCorr}.
Also, for $x \in G / M$
denote the corresponding unitary in $C^* (G / M)$ by~$u_x.$
We use the formula of \Lem{L:ToFiniteGp} at the first step
and $\supp (f) \S N$ at the second step to write
\[
\kp_M (f)
 = \sum_{g \in S} \left( \int_{g M} f \, d \mu \right) u_{g M}
 = \sum_{g \in S \cap N} \left( \int_{g M} f \, d \mu \right) u_{g M}.
\]
Using the uniqueness part of \Lem{L:ToFiniteGp},
we see that
\[
\sm_M |_{C^* \left( {\ov{G}} \right)} \colon
     C^* \big( {\ov{G}} \big) \to C^* (G / M)
\]
is also a \hm\  of the form of \Lem{L:ToFiniteGp}.
Using the formula of \Lem{L:ToFiniteGp} at the first step,
and the fact that $a$ is constant on cosets of~${\ov{N}}$
and ${\ov{M}} \S {\ov{N}}$ at the second step,
we thus get
\[
\sm_M (a)
 = \sum_{h \in S}
     \left( \int_{\gm_G (h) {\ov{M}}} a \, d {\ov{\mu}} \right) u_{h M}
 = [G : M]^{-1} \sum_{h \in S} a ( \gm_G (h)) u_{h M}.
\]
Now multiply:
\[
\kp_M (f) \sm_M (a)
  = [G : M]^{-1} \sum_{g \in S \cap N}
    \left( \int_{g M} f \, d \mu \right)
     \sum_{h \in S} a (\gm_G (h)) u_{g h M}.
\]
Since $a$ is constant on cosets of~${\ov{N}},$
we can replace $a (\gm_G (h))$ by $a (\gm_G (g h)).$
Because $g S$ is also a system of coset representatives
of ${\ov{M}}$ in~${\ov{G}},$
we have
\[
[G : M]^{-1} \sum_{h \in S} a (\gm_G (g h)) u_{g h M} = \sm_M (a).
\]
Since $f$ vanishes off~$N,$
we therefore get
\[
\kp_M (f) \sm_M (a)
 = \sum_{g \in S \cap N} \left( \int_{g M} f \, d \mu \right) \sm_M (a)
 = \left( \int_{G} f \, d \mu \right) \sm_M (a).
\]
This proves the claim.

We next claim that if $f$ and $a$ are as in the claim,
then $\ph_G (f) a =  \left( \int_{G} f \, d \mu \right) a.$
This follows from the fact that the intersection of
the kernels $\ker (\sm_M),$
as $M$ runs through all $M \in \CNG$ such that $M \S N,$
is zero.
We can now choose $f \in C_{\mathrm{c}} (G)$ with $\supp (f) \subset N$
such that $\int_G f \, d \mu = 1,$
getting $\ph_G (f) a = a.$

To complete the proof of nondegeneracy,
it is enough to prove that the set
\[
T = \big\{ a \in C \big( {\ov{G}} \big) \colon
   {\mbox{there is $N \in \CNG$ such that
   $a$ is constant on cosets of~${\ov{N}}$}} \big\}
\]
is dense in $C^* \big( {\ov{G}} \big).$
It follows from uniform continuity of elements of
$C \big( {\ov{G}} \big)$
that $T$ is dense in $C \big( {\ov{G}} \big).$
The proof is completed
by observing that $C \big( {\ov{G}} \big)$
is dense in $C^* \big( {\ov{G}} \big).$
\end{proof}

The main business of this section is the study of the maps
$\ph_G$ and ${\wt{\ph}}_G.$

The first question we consider is when $\ph_G$ is injective.
Formally, this question doesn't involve \pca s at all;
they merely provide the motivation.

\begin{prp}\label{P:Factor}
Let $G$ be a locally compact group,
with profinite completion ${\ov{G}},$
with canonical map $\gm_G \colon G \to {\ov{G}},$
and with
$\ph_G \colon C^* (G) \to M \big( C^* \big( {\ov{G}} \big) \big)$
as in Proposition~\ref{P:MapToCstPfc}.
Let $H$ be a Hilbert space,
let $g \mapsto v_g$ be a unitary representation of $G$ on~$H,$
and let $\pi \colon C^* (G) \to L (H)$ be the corresponding
nondegenerate representation of $C^* (G).$
Then \tfae:
\begin{enumerate}
\item\label{P:Factor-Gp}
The representation $g \mapsto v_g$ factors through ${\ov{G}}$:
there exists a unitary representation $x \mapsto w_x$
of ${\ov{G}}$ on $H$ such that $v_g = w_{\gm_G (g)}$ for all $g \in G.$
\item\label{P:Factor-Cst}
The representation $\pi$
factors through $M \big( C^* \big( {\ov{G}} \big) \big)$:
there exists a nondegenerate representation
$\mu \colon C^* \big( {\ov{G}} \big) \to L (H)$
whose extension ${\wt{\mu}}$ to
$M \big( C^* \big( {\ov{G}} \big) \big)$
satisfies $\pi = {\wt{\mu}} \circ \ph_G.$
\end{enumerate}
\end{prp}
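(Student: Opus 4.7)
The plan is to reduce the statement to checking agreement on the multiplier unitaries $u_g$, exploiting two key facts from earlier: the multiplier extension $\wt{\ph}_G$ satisfies $\wt{\ph}_G(u_g) = u_{\gm_G(g)}$ (Proposition~\ref{P:MapToCstPfc}(\ref{P:MapToCstPfc-1})), and the bijective correspondence between unitary representations of a locally compact group and nondegenerate representations of its group \ca\ (Remark~\ref{R:UGMG}), under which a unitary representation $g \mapsto v_g$ is recovered from a nondegenerate representation $\pi$ as $v_g = \wt{\pi}(u_g)$.

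For (\ref{P:Factor-Cst}) $\Rightarrow$ (\ref{P:Factor-Gp}), I would take the postulated nondegenerate representation $\mu$ of $C^*\big({\ov{G}}\big)$ and let $w$ be the unitary representation of ${\ov{G}}$ to which it corresponds, so that $\wt{\mu}(u_x) = w_x$ for all $x \in {\ov{G}}.$ The composite $\wt{\mu} \circ \wt{\ph}_G$ is a unital \hm\ $M(C^*(G)) \to L(H)$ whose restriction to $C^*(G)$ is $\wt{\mu} \circ \ph_G = \pi$; by the uniqueness clause of Theorem~\ref{T:MultAlg}(\ref{T:MultAlg-Rep}) applied to the nondegenerate representation $\pi,$ this composite must equal $\wt{\pi}.$ Evaluating on $u_g$ then gives $v_g = \wt{\pi}(u_g) = \wt{\mu}(u_{\gm_G(g)}) = w_{\gm_G(g)},$ as required.

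For (\ref{P:Factor-Gp}) $\Rightarrow$ (\ref{P:Factor-Cst}), I would start from the representation $w$ of ${\ov{G}}$ and let $\mu \colon C^*\big({\ov{G}}\big) \to L(H)$ be the corresponding nondegenerate representation. Set $\pi' = \wt{\mu} \circ \ph_G.$ This is a nondegenerate representation of $C^*(G)$: nondegeneracy of $\ph_G$ (Proposition~\ref{P:MapToCstPfc}(\ref{P:MapToCstPfc-2})) together with nondegeneracy of $\mu$ imply that $\pi'(C^*(G)) H$ spans a dense subspace of~$H.$ Its multiplier extension must therefore equal $\wt{\mu} \circ \wt{\ph}_G,$ which sends $u_g$ to $\wt{\mu}(u_{\gm_G(g)}) = w_{\gm_G(g)} = v_g = \wt{\pi}(u_g).$ Since two nondegenerate representations of $C^*(G)$ that give rise to the same unitary representation of~$G$ must coincide, we conclude $\pi = \pi'.$

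The one subtlety to watch for is nondegeneracy of $\wt{\mu} \circ \ph_G,$ which is what unlocks the uniqueness of its multiplier extension; this is exactly the content of Proposition~\ref{P:MapToCstPfc}(\ref{P:MapToCstPfc-2}), so no new work is required. Beyond that, the argument is essentially bookkeeping about multiplier extensions and the group-algebra correspondence.
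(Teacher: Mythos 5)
Your proof is correct, and your argument for (\ref{P:Factor-Gp})~$\Rightarrow$~(\ref{P:Factor-Cst}) is genuinely different from, and shorter than, the paper's. The paper first uses compactness of ${\ov{G}}$ to reduce to the case where $w$ is \fd, then invokes Lemma~\ref{L:FinRange} to factor $w$ through a finite quotient ${\ov{G}} / {\ov{N}},$ and finally computes ${\wt{\mu}} \circ \ph_G = \mu^{(0)} \circ \kp_N$ explicitly before evaluating on the unitaries $u_g.$ You bypass the finite-dimensional reduction and the finite quotients entirely: you pass directly from $w$ to $\mu,$ observe that ${\wt{\mu}} \circ \ph_G$ is nondegenerate (the spanning argument via $\pi'(a)\mu(b)\xi = \mu(\ph_G(a) b)\xi$ together with Proposition~\ref{P:MapToCstPfc}(\ref{P:MapToCstPfc-2}) does work), identify its multiplier extension with ${\wt{\mu}} \circ {\wt{\ph}}_G$ by the uniqueness clause of Theorem~\ref{T:MultAlg}(\ref{T:MultAlg-Rep}), and conclude via the injectivity of the correspondence of Remark~\ref{R:UGMG}. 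What your route buys is generality and economy: it uses nothing about ${\ov{G}}$ beyond the existence of a nondegenerate \hm\ $\ph_G$ with ${\wt{\ph}}_G (u_g) = u_{\gm_G (g)},$ so neither compactness nor profiniteness of the target group is needed. What the paper's route buys is an explicit description of the factorization through the \fd\ quotients $C^* (G / N),$ consistent with the computational style of the surrounding section, though nothing in the later arguments appears to depend on that extra information. Your treatment of (\ref{P:Factor-Cst})~$\Rightarrow$~(\ref{P:Factor-Gp}) coincides with the paper's.
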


\begin{proof}
Assume~(\ref{P:Factor-Gp}).
Since ${\ov{G}}$ is compact,
$w$ is a direct sum of \fd\  representations.
It therefore suffices to prove~(\ref{P:Factor-Cst}) under
the assumption that $w$ is \fd.

Apply Lemma~\ref{L:FinRange} to~$w,$
with ${\ov{G}}$ in place of~$G,$
and use Lemma~\ref{R:NCorr},
to find $N \in \CNG$
and a unitary representation $w^{(0)}$
of ${\ov{G}} / {\ov{N}}$ on~$H$
such that $w_x = w^{(0)}_{x {\ov{N}} }$ for all $x \in {\ov{G}}.$
Let $\mu \colon C^* \big( {\ov{G}} \big) \to L (H)$
and
$\mu^{(0)} \colon
    C^* \big( {\ov{G}} / {\ov{N}} \big) \to L (H)$
be the nondegenerate representations of \ca s
corresponding to $w$ and $w^{(0)}.$
Let
$\nu \colon
 C^* \big( {\ov{G}} \big)
  \to C^* \big( {\ov{G}} / {\ov{N}} \big)$
be the map of \Lem{L:ToFiniteGp}.
Thus $\mu^{(0)} \circ \nu = \mu.$
Let
\[
{\wt{\mu}} \colon
   M \big( C^* \big( {\ov{G}} \big) \big)
  \to L (H)
\andeqn
{\wt{\nu}} \colon
   M \big( C^* \big( {\ov{G}} \big) \big)
  \to C^* \big( {\ov{G}} / {\ov{N}} \big)
\]
be the extensions of $\mu$ and~$\nu.$
Following Lemma~\ref{R:NCorr},
identify ${\ov{G}} / {\ov{N}}$ with $G / N,$
and thus identify ${\wt{\nu}}$ with the map $\sm_N$
in the statement of Proposition~\ref{P:MapToCstPfc}.
Also let $\kp_N \colon C^* (G) \to C^* (G / N)$ be as there.
Then, using Proposition~\ref{P:MapToCstPfc} at the second step,
\[
{\wt{\mu}} \circ \ph_G
  = \mu^{(0)} \circ {\wt{\nu}} \circ \ph_G
  = \mu^{(0)} \circ \kp_N.
\]
It follows from
Proposition~\ref{P:MapToCstPfc}(\ref{P:MapToCstPfc-2})
that ${\wt{\mu}} \circ \ph_G$ is nondegenerate.
Let
\[
{\wt{\ph}}_G \colon
 M (C^* (G)) \to M \big( C^* \big( {\ov{G}} \big) \big)
\andeqn
{\wt{\kp}}_N \colon
 M (C^* (G)) \to C^* (G / N)
\]
be the extensions of $\ph_G$ and $\kp_N$
(the first coming from
Proposition~\ref{P:MapToCstPfc}(\ref{P:MapToCstPfc-1})).
Then
\[
{\wt{\mu}} \circ {\wt{\ph}}_G
  = \mu^{(0)} \circ {\wt{\kp}}_N.
\]
For $g \in G,$
we thus have
\[
({\wt{\mu}} \circ {\wt{\ph}}_G) (u_g)
  = \big( \mu^{(0)} \circ {\wt{\kp}}_N \big) (u_g)
  = w^{(0)}_{\gm_G (g) {\ov{N}}}
  = w_{\gm_G (g)}
  = v_g
  = \pi (u_g).
\]
Since ${\wt{\mu}} \circ\ph_G$ is nondegenerate,
it follows that ${\wt{\mu}} \circ \ph_G = \pi.$
This is the required factorization of~$\pi.$

Now assume~(\ref{P:Factor-Cst}).
Let ${\wt{\pi}}$ be the extension of $\pi$ to $M (C^* (G)),$
and let ${\wt{\ph}}_G$
be as in Proposition~\ref{P:MapToCstPfc}(\ref{P:MapToCstPfc-1}).
Then ${\wt{\mu}} \circ {\wt{\ph}}_G$ also extends~$\pi,$
so ${\wt{\mu}} \circ {\wt{\ph}}_G = {\wt{\pi}}.$
For $g \in G,$ we then have,
using Proposition~\ref{P:MapToCstPfc}(\ref{P:MapToCstPfc-1})
at the third step,
\[
v_g = {\wt{\pi}} (u_g)
    = ({\wt{\mu}} \circ {\wt{\ph}}_G) (u_g)
    = {\wt{\mu}} (u_{\gm_G (g)})
    = w_{\gm_G (g)}.
\]
This proves~(\ref{P:Factor-Gp}).
\end{proof}

\begin{cor}\label{C:KerPhi}
Let $G$ be a locally compact group,
with profinite completion ${\ov{G}},$
and let
$\ph_G \colon C^* (G) \to M \big( C^* \big( {\ov{G}} \big) \big)$
be as in Proposition~\ref{P:MapToCstPfc}.
Let $I \S C^* (G)$
be the intersection
of the kernels of the representations of $C^* (G)$
associated as in Remark~\ref{R:UGMG}
to representations of $G$ with finite range.
Then $\ker (\ph_G) = I.$
\end{cor}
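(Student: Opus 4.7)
The plan is to prove the two inclusions $\ker(\ph_G) \subseteq I$ and $I \subseteq \ker(\ph_G)$ separately, each time using Proposition~\ref{P:Factor} to translate between factorizations of unitary representations through $\ov{G}$ and factorizations of their integrated forms through $\ph_G$. The reverse inclusion will additionally require the structural description of $M \big( C^* \big( \ov{G} \big) \big)$ supplied by Proposition~\ref{P:BddIsMult} and Proposition~\ref{P:1-19A}, which lets us test vanishing in the multiplier algebra one irreducible summand at a time.

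For the inclusion $\ker(\ph_G) \subseteq I,$ let $a \in \ker(\ph_G)$ and let $\pi$ be one of the representations defining $I,$ corresponding via Remark~\ref{R:UGMG} to a \ct\  unitary representation $v$ of $G$ whose range is finite. Then $\ker(v)$ is a closed normal subgroup of finite index in $G,$ hence lies in $\CNG,$ so $v$ factors through $G / \ker(v),$ which in turn is a quotient of $\ov{G}$ by the universal property of the \pfcmp. Thus $v$ factors through $\ov{G},$ and Proposition~\ref{P:Factor} yields a nondegenerate representation $\mu$ of $C^* \big( \ov{G} \big)$ with $\pi = \wt{\mu} \circ \ph_G.$ Applying this at $a$ gives $\pi(a) = \wt{\mu}(\ph_G(a)) = 0,$ so $a \in \ker(\pi).$ Intersecting over all such $\pi$ yields $a \in I.$

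For the reverse inclusion $I \subseteq \ker(\ph_G),$ let $a \in I.$ By Proposition~\ref{P:BddIsMult} applied to the compact group $\ov{G},$ together with Proposition~\ref{P:1-19A}, $M \big( C^* \big( \ov{G} \big) \big)$ is canonically identified with $\prod_{s \in S} L(H_s),$ where $S$ is a set of representatives for the irreducible unitary representations $w$ of $\ov{G}$ on \fd\  Hilbert spaces $H_s,$ and an element is zero \ifo\  each of its components is. Thus it suffices to show that for every such $w,$ with associated $\mu \colon C^* \big( \ov{G} \big) \to L(H_s),$ one has $\wt{\mu}(\ph_G(a)) = 0.$ The pulled-back representation $v_g = w_{\gm_G(g)}$ factors through $\ov{G}$ by construction, so Proposition~\ref{P:Factor} gives $\wt{\mu} \circ \ph_G = \pi,$ where $\pi$ is the nondegenerate representation of $C^*(G)$ corresponding to $v.$ The critical point is that $v$ actually has finite range: since $\ov{G}$ is profinite and $H_s$ is \fd, Lemma~\ref{L:FinRange} shows that $w$ factors through some $\ov{G} / \ov{N}$ with $\ov{N} \in \CNGb,$ hence $w(\ov{G})$ is finite, and therefore $v(G) \subseteq w(\ov{G})$ is finite too. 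Consequently $\pi$ belongs to the family defining $I,$ so $\pi(a) = 0$ and $\wt{\mu}(\ph_G(a)) = 0.$ As this holds for every $w,$ we conclude $\ph_G(a) = 0.$

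The main obstacle, and the one that forces the statement to balance, is ensuring that the representations of $C^*(G)$ extracted from the irreducible representations of $\ov{G}$ actually belong to the family defining $I$; this is precisely the content of Lemma~\ref{L:FinRange} and depends on the profiniteness of $\ov{G}$ in an essential way. Everything else is a formal application of Proposition~\ref{P:Factor} and the decomposition of $M \big( C^* \big( \ov{G} \big) \big)$ as a product.
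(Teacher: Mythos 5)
Your proposal is correct. The first inclusion, $\ker(\ph_G) \subseteq I,$ is exactly the paper's argument: a finite-range representation of $G$ factors through ${\ov{G}},$ so Proposition~\ref{P:Factor} makes the integrated form factor through $\ph_G.$ For the reverse inclusion you take a genuinely different (though closely related) route. The paper observes that $a \in I$ forces $\kp_N (a) = 0$ for every $N \in \CNG$ (implicitly because a faithful representation of $C^* (G / N)$ pulls back to a finite-range representation of $G$), hence $\sm_N (\ph_G (a)) = 0$ for all $N,$ and then invokes the faithfulness of the pro-C*-structure on $M \big( C^* \big( {\ov{G}} \big) \big)$ from Proposition~\ref{L:IsPCSStr}(\ref{L:IsPCSStr-2}) to conclude $\ph_G (a) = 0.$ You instead test $\ph_G(a)$ component by component against the irreducible representations of ${\ov{G}},$ using the product decomposition from Propositions \ref{P:BddIsMult} and~\ref{P:1-19A}, and you use Lemma~\ref{L:FinRange} to verify explicitly that each such irreducible pulls back to a finite-range representation of $G.$ The two arguments rest on the same structural facts (the paper's faithfulness statement is itself proved via that product decomposition), but yours is more granular and has the merit of making explicit the finite-range verification that the paper leaves tacit; the paper's is shorter because it delegates that work to Proposition~\ref{L:IsPCSStr}. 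Two small points of hygiene: $M \big( C^* \big( {\ov{G}} \big) \big)$ is identified with the \emph{bounded} elements of $\prod_{s} L (H_s),$ not the whole product (this does not affect your use of componentwise vanishing); and Proposition~\ref{P:Factor} as stated only asserts the existence of \emph{some} $\mu$ with $\pi = {\wt{\mu}} \circ \ph_G,$ so to know it is the integrated form of your chosen $w$ you should either cite its proof or note that ${\wt{\mu}} (u_{\gm_G (g)}) = v_g = w_{\gm_G (g)}$ together with density of $\gm_G (G)$ in ${\ov{G}}$ pins $\mu$ down.
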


\begin{proof}
Let $a \in \ker (\ph_G).$
Let $v \colon G \to L (H)$ be an arbitrary representation
of $G$ with finite range,
with associated representation $\pi \colon C^* (G) \to L (H).$
Then $v$ factors through~${\ov{G}},$
so Proposition~\ref{P:Factor} implies that
$\pi$ factors through $M \big( C^* \big( {\ov{G}} \big) \big).$
Therefore $\pi (a) = 0.$
This shows that $a \in I.$

Conversely,
let $a \in I.$
For $N \in \CNG,$
let $\kp_N$ and $\sm_N$
be as in the statement of Proposition~\ref{P:MapToCstPfc}.
Then $\kp_N (a) = 0$ for all $N \in \CNG.$
Therefore $\sm_N ( \ph_G (a)) = 0$ for all $N \in \CNG.$
The last sentence of Proposition~\ref{L:IsPCSStr}(\ref{L:IsPCSStr-2}),
applied with ${\ov{G}}$ in place of~$G,$
now implies that $\ph_G (a) = 0.$
\end{proof}

\begin{cor}\label{C:PhiInj}
Let $G$ be a locally compact group,
let
$\ph_G \colon
 C^* (G) \to M \big( C^* \big( {\ov{G}} \big) \big)$
be as in Proposition~\ref{P:MapToCstPfc},
and let $I$ be as in Corollary~\ref{C:KerPhi}.
Then $\ph_G$ is injective \ifo\  $I = 0.$
\end{cor}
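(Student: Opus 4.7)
The plan is to observe that this corollary is an essentially immediate consequence of Corollary~\ref{C:KerPhi}, which has already done the substantive work of identifying the kernel of $\ph_G$ with the ideal~$I$. A \hm\ between \ca s (or indeed between any algebras) is injective \ifo\ its kernel is the zero ideal, so the claim reduces to a one-line invocation of this general fact.

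More specifically, I would first recall the statement of Corollary~\ref{C:KerPhi}: the ideal $I \S C^* (G),$ defined as the intersection of the kernels of the representations of $C^* (G)$ corresponding under Remark~\ref{R:UGMG} to unitary representations of~$G$ with finite range, coincides with $\ker ( \ph_G ).$ Given this identification, the ``only if'' direction says that if $\ph_G$ is injective then $I = \ker (\ph_G) = 0,$ and the ``if'' direction says that if $I = 0$ then $\ker (\ph_G) = I = 0,$ so $\ph_G$ is injective. No further ingredients from the earlier part of the paper are needed; in particular, the construction of $\ph_G,$ the role of the multiplier algebra $M \big( C^* \big( {\ov{G}} \big) \big),$ and the reduction to finite-range representations via Proposition~\ref{P:Factor} have all been absorbed into Corollary~\ref{C:KerPhi}.

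There is no real obstacle here; the corollary is essentially a restatement of Corollary~\ref{C:KerPhi} emphasizing the injectivity criterion. If anything, the only point worth noting is that the characterization of $I$ in Corollary~\ref{C:KerPhi} makes this a genuinely useful criterion, because it reformulates injectivity of the Hilbert-space-valued \hm~$\ph_G$ as a purely representation-theoretic condition on~$G$: namely, that the unitary representations of~$G$ with finite range separate points of $C^* (G).$ This is the form of the statement that will be used in the subsequent applications to \rf\ amenable groups and abelian groups.
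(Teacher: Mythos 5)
Your proposal is correct and matches the paper exactly: the paper's proof is the one-line observation that the corollary is immediate from Corollary~\ref{C:KerPhi}, since a \hm\ is injective \ifo\ its kernel vanishes and that corollary identifies $\ker (\ph_G)$ with~$I.$ Nothing further is needed.
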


\begin{proof}
This is immediate from Corollary~\ref{C:KerPhi}.
\end{proof}

\begin{cor}\label{C:PhiInjImpGRF}
Let $G$ be a locally compact group,
and suppose that the map
$\ph_G \colon
 C^* (G) \to M \big( C^* \big( {\ov{G}} \big) \big)$
of Proposition~\ref{P:MapToCstPfc} is injective.
Then $G$ is \rf.
\end{cor}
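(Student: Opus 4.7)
The plan is to establish the contrapositive: assuming $G$ is not \rf, I will exhibit a nonzero element of $\ker (\ph_G).$ By Corollary~\ref{C:PhiInj} together with Corollary~\ref{C:KerPhi}, it suffices to produce a nonzero $a \in C^* (G)$ which is killed by the representation of $C^* (G)$ associated, via Remark~\ref{R:UGMG}, to every unitary representation of $G$ with finite image.

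If $G$ is not \rf, the canonical \hm\  $\gm_G \colon G \to {\ov{G}}$ has nontrivial kernel, so fix $g \in G$ with $g \neq e$ and $g \in \bigcap_{N \in \CNG} N.$ In the notation of Remark~\ref{R:UGMG}, I want to form the element $a := (u_g - 1) f \in C^* (G)$ for a suitable $f \in C^* (G).$ For $a$ to be nonzero I need $u_g \neq 1$ in $M (C^* (G)),$ and this is the main (and essentially the only) technical point. It is settled by the left regular representation $\ld$ of~$G$: the associated nondegenerate \hm\  $C^* (G) \to L (L^2 (G))$ extends, by Theorem~\ref{T:MultAlg}(\ref{T:MultAlg-Rep}), to a unital map of $M (C^* (G))$ sending $u_g$ to the left translation operator $\ld_g,$ and $\ld_g \neq 1$ since $g \neq e$ in a locally compact Hausdorff group. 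Therefore $u_g - 1 \neq 0$ in $M (C^* (G)),$ and by the very definition of the multiplier algebra I can choose $f \in C^* (G)$ so that $a = (u_g - 1) f$ is a nonzero element of $C^* (G).$

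It remains to check $a \in I,$ where $I$ is the ideal of Corollary~\ref{C:KerPhi}. Let $v \colon G \to U (H)$ be any unitary representation of~$G$ with finite image, with associated representation $\pi \colon C^* (G) \to L (H)$ and extension ${\wt{\pi}}$ as in Theorem~\ref{T:MultAlg}(\ref{T:MultAlg-Rep}). Since $v$ has finite image, $\ker (v)$ is a closed normal subgroup of~$G$ of finite index, so $\ker (v) \in \CNG;$ by the choice of~$g,$ we have $g \in \ker (v),$ i.e.\ $v_g = 1.$ Hence ${\wt{\pi}} (u_g - 1) = v_g - 1 = 0,$ and therefore $\pi (a) = {\wt{\pi}} (u_g - 1) \, \pi (f) = 0.$ Thus $a \in I,$ and Corollary~\ref{C:PhiInj} completes the argument. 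The only nonobvious step is the nonvanishing of $u_g - 1$ in $M (C^* (G)),$ which is handled via the left regular representation as above.
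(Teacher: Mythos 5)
Your proof is correct. You argue the contrapositive, while the paper argues directly, and the technical ingredients differ enough to be worth noting. The paper starts from injectivity of $\ph_G$: by Corollary~\ref{C:PhiInj} the direct sum $v$ of all finite-range finite dimensional representations gives an injective $\pi$ on $C^* (G)$, then Theorem~\ref{T:MultAlg}(\ref{T:MultAlg-Rep}) promotes this to injectivity of ${\wt{\pi}}$ on $M (C^* (G))$, whence $g \mapsto v_g$ is injective and $G$ is \rf. You instead take $g \neq e$ in $\bigcap_{N \in \CNG} N$ and manufacture an explicit nonzero element $(u_g - 1) f$ of the ideal $I$ of Corollary~\ref{C:KerPhi}; the work is shifted to showing $u_g \neq 1$ in $M (C^* (G))$, which you settle with the left regular representation, and to the fact that $C^* (G)$ is an essential ideal in its multiplier algebra so that a suitable $f$ exists. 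Both routes pivot on Corollary~\ref{C:KerPhi} and on the identity ${\wt{\pi}} (u_g) = v_g$ from Remark~\ref{R:UGMG}; the paper's version is shorter because it delegates the multiplier bookkeeping to Theorem~\ref{T:MultAlg}, while yours is more concrete in that it exhibits a witness for non-injectivity, at the modest cost of the extra observations about $\lambda_g \neq 1$ and faithfulness of the multiplier action. One small point worth making explicit in your write-up: the kernel of a continuous finite-range unitary representation is closed because the unitary group is Hausdorff in the strong topology, which is what puts $\ker (v)$ in $\CNG$.
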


\begin{proof}
Every unitary representation of~$G$
with finite range is a direct sum of \fd\  representations.
Corollary~\ref{C:PhiInj}
therefore implies that there is a family
$\big( v^{(\ld)} \big)_{\ld \in \Ld}$ of
\fd\  unitary representations of~$G,$
each with finite range,
such that the representation $\pi \colon C^* (G) \to L (H),$
associated to $v = \bigoplus_{\ld \in \Ld} v^{(\ld)},$
is injective.
Theorem~\ref{T:MultAlg}(\ref{T:MultAlg-Rep})
implies that
its extension ${\wt{\pi}}$ to $M (C^* (G))$
is also injective.
Since ${\wt{\pi}} (u_g) = v_g$ for all $g \in G,$
we conclude that $v$ is injective.
Since $v$ is a direct sum of representations with finite range,
it follows that $G$ is \rf.
\end{proof}

\begin{exa}\label{E:SL3Z}
The converse to Corollary~\ref{C:PhiInjImpGRF}
is false.
Indeed,
the group $G = {\mathrm{SL}}_3 (\Z)$ is \rf,
but its full group \ca\  is not \rfd\  %
(by the main theorem of~\cite{Bk}).
Since $M \big( C^* \big( {\ov{G}} \big) \big)$ is \rfd\  %
(by Proposition~\ref{P:BddIsMult}),
it follows that $\ph_G$ is not injective.
\end{exa}

The method of Example~\ref{E:SL3Z} suggests the following question,
to which we do not know the answer.

\begin{qst}\label{Pb:IfCStarIsRFD}
Is there a \rf\  locally compact group~$G$
such that $C^* (G)$ is \rfd\  %
but $\ph_G$ is not injective?
\end{qst}

The free group $F_2$ on two generators
is a candidate.
The algebra $C^* (F_2)$ is \rfd\  by Theorem~7 of~\cite{Ch}.
Thus, the direct sum of all representations of $F_2$ on
\fd\  Hilbert spaces yields a faithful representation of $C^* (G).$
However,
it is not in general possible to approximate a representation of~$G$
on a \fd\  Hilbert space pointwise
by representations on the same Hilbert space with finite range.
This follows from two facts.
The first is Jordan's Theorem
(see page~91 of~\cite{Jr}),
according to which for every $n \in \N$
there is $l (n) \in \N$ that every finite subgroup of the
unitary group of $M_n$ contains an abelian normal subgroup of index
at most~$l (n).$
(For recent proofs and strengthenings,
see Section~2 of~\cite{BG} and Proposition~2.3 of~\cite{Th}.)
The second is the existence of injective representations of~$F_2$
in the unitary matrix groups $U (n).$
For example, define
\[
u = \left( \begin{matrix}
\sqrt{2} / 2 & 0 & - \sqrt{2} / 2  \\
0            & 1 &             0   \\
\sqrt{2} / 2 & 0 &   \sqrt{2} / 2
\end{matrix} \right)
\andeqn
v = \left( \begin{matrix}
1 &            0 & 0             \\
0 & \sqrt{3} / 2 & - 1 / 2       \\
0 & 1 / 2        & \sqrt{3} / 2
% 1 & 0 & 0  \\
% 0 & \cos (2 \pi /7) & - \sin (2 \pi /7)  \\
% 0 & \sin (2 \pi /7) & \cos (2 \pi /7)
\end{matrix} \right).
\]
Then,
by Section~4 of~\cite{RS},
the elements $(u v)^2$ and $(u v^2)^2$
generate a copy of $F_2$ inside $U (3).$

By itself, the situation described in the previous paragraph
does not show that the
direct sum of all unitary representations of $F_2$ with
finite range
is not faithful on $C^* (F_2).$

We can give two positive results.

\begin{prp}\label{P:RFAb}
Let $G$ be a \rf\  locally compact abelian group.
Then the map
$\ph_G \colon
 C^* (G) \to M \big( C^* \big( {\ov{G}} \big) \big)$
of Proposition~\ref{P:MapToCstPfc} is injective.
\end{prp}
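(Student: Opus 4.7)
The plan is to reduce to a statement about Pontryagin duality. By Corollary~\ref{C:PhiInj}, it suffices to show that the ideal $I$ of Corollary~\ref{C:KerPhi} is zero, i.e., that the representations of $C^* (G)$ coming from unitary representations of~$G$ with finite range separate points.

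Since $G$ is abelian, every finite quotient of~$G$ is abelian, so every unitary representation of~$G$ with finite range decomposes as a direct sum of one-dimensional representations, i.e., characters $\ch \colon G \to \mathbb{T}$ with finite image. Let $\wh{G}$ denote the Pontryagin dual of~$G$ and let
\[
\wh{G}_{\mathrm{fin}} = \{ \ch \in \wh{G} \colon \ch (G) \,\, {\mbox{is finite}} \}.
\]
Via the Gelfand-Fourier isomorphism $C^* (G) \cong C_0 ( \wh{G} )$, the one-dimensional representation associated to $\ch \in \wh{G}$ is simply evaluation at~$\ch$. Thus, under this isomorphism, $I$ consists of exactly those $f \in C_0 ( \wh{G} )$ which vanish on $\wh{G}_{\mathrm{fin}}$. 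So the problem reduces to proving that $\wh{G}_{\mathrm{fin}}$ is dense in $\wh{G}$.

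For each $N \in \CNG$, let $N^{\perp} = \{ \ch \in \wh{G} \colon \ch |_N = 1 \}$ be the annihilator of~$N$; this is a closed subgroup of $\wh{G}$ which is canonically isomorphic to $\wh{G / N}$ and hence consists of characters with finite image. Conversely, any $\ch \in \wh{G}_{\mathrm{fin}}$ factors through the closed normal subgroup $\ker (\ch) \in \CNG$ of finite index, so $\ch \in \ker (\ch)^{\perp}$. Therefore
\[
\wh{G}_{\mathrm{fin}} = \bigcup_{N \in \CNG} N^{\perp}.
\]
By the standard Pontryagin duality identity for annihilators of intersections of closed subgroups, the closure of this union equals $\bigl( \bigcap_{N \in \CNG} N \bigr)^{\perp}$. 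Residual finiteness of~$G$ gives $\bigcap_{N \in \CNG} N = \{ 1 \}$, so this annihilator is all of $\wh{G}$. Thus $\wh{G}_{\mathrm{fin}}$ is dense in $\wh{G}$, which finishes the proof.

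There is no serious obstacle; the argument is essentially a direct translation through Pontryagin duality, with the only nontrivial input being the standard duality between directed intersections of closed subgroups of a locally compact abelian group and closures of sums of their annihilators.
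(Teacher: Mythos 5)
Your proof is correct and is essentially the paper's argument in dual clothing: the paper identifies $\ph_G$ with precomposition by the dual map $\wh{\gm}_G \colon \wh{{\ov{G}}} \to \wh{G}$ and quotes the fact that the dual of an injective homomorphism has dense range, which is exactly your statement that the closure of $\bigcup_{N} N^{\perp}$ equals $\bigl( \bigcap_{N} N \bigr)^{\perp} = \wh{G}$, since the range of $\wh{\gm}_G$ is precisely the set of finite-image characters. Both proofs rest on the same Pontryagin-duality fact; yours merely routes through Corollary~\ref{C:PhiInj} and the annihilator identity instead of naming the dual map explicitly.
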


Not all locally compact abelian groups are \rf.
The group $S^1$ with its usual topology,
and $\Q$ with the discrete topology, are counterexamples.
For both of these, $M \big( C^* \big( {\ov{G}} \big) \big) = \C.$

\begin{proof}[Proof of Proposition~\ref{P:RFAb}]
To make the notation less awkward,
set $P = {\ov{G}},$ and abbreviate $\gm_G \colon G \to P$ to~$\gm.$
We let ${\wh{G}}$ and ${\wh{P}}$ denote the Pontryagin duals
of $G$ and~$P,$
and we let ${\wh{\gm}} \colon {\wh{P}} \to {\wh{G}}$
be the map induced by~$\gm.$
We can identify $C^* (G)$
with $C_0 \big( {\wh{G}} \big)$
and $M \big( C^* \big( {\ov{G}} \big) \big)$ with the algebra
$C_{\mathrm{b}} \big( {\wh{P}} \big)$ of bounded
\cfn s on~${\wh{P}}.$
Then $\ph_G$ is given by
$\ph_G (f) = f \circ {\wh{\gm}}.$

Since $G$ is \rf, $\gm$ is injective.
Therefore ${\wh{\gm}}$ has dense range.
(See 24.41(b) of~\cite{HR}.)
So $f \mapsto f \circ {\wh{\gm}}$ is injective.
\end{proof}

\begin{thm}\label{T:RFAmen}
Let $G$ be an amenable \rf\  discrete group.
Then the map
$\ph_G \colon C^* (G) \to M \big( C^* \big( {\ov{G}} \big) \big)$
of Proposition~\ref{P:MapToCstPfc} is injective.
\end{thm}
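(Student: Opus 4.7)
The plan is to apply Corollary~\ref{C:PhiInj}, which says $\ph_G$ is injective \ifo\  the ideal $I$ of Corollary~\ref{C:KerPhi}---the intersection of the kernels of the \rpn s of $C^* (G)$ associated to unitary \rpn s of~$G$ with finite range---is zero.  Since $G$ is discrete and amenable, $C^* (G) = C^*_{\mathrm{r}} (G),$ and so the left regular \rpn\  $\ld \colon C^* (G) \to L (\ell^2 (G))$ is faithful.  It therefore suffices to show that the direct sum $\bigoplus_{N \in \CNG} \ld_N$ of quasi-regular \rpn s $\ld_N$ on $\ell^2 (G / N),$ each of which factors through the finite group $G / N$ and so has finite range, is faithful on $C^* (G).$

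By density of $\C [G]$ in $C^* (G),$ this reduces to the norm estimate $\| \ld (a) \| \leq \sup_{N \in \CNG} \| \ld_N (a) \|$ for $a \in \C [G].$  Fix such an $a$ with finite support $F \subset G$ and let $\ep > 0.$  Choose a finitely supported unit vector $\xi \in \ell^2 (G),$ with support~$S,$ satisfying $\| \ld (a) \xi \| > \| \ld (a) \| - \ep.$  By residual finiteness of~$G,$ the finite set $(F S)(F S)^{-1} \SM \{ 1 \}$ is disjoint from some $N \in \CNG,$ that is, the quotient map $q_N \colon G \to G / N$ is injective on $F S.$  Define $\ov{\xi} = \sum_{h \in S} \xi_h \delta_{h N} \in \ell^2 (G / N),$ which is a unit vector since $q_N$ is injective on~$S.$  A direct computation using injectivity of $q_N$ on $F S$ shows that the natural map $\ell^2 (F S) \to \ell^2 (G / N)$ induced by $q_N$ is an isometry sending $\ld (a) \xi$ to $\ld_N (a) \ov{\xi},$ whence $\| \ld_N (a) \| \geq \| \ld_N (a) \ov{\xi} \| = \| \ld (a) \xi \| > \| \ld (a) \| - \ep.$  Letting $\ep \to 0$ yields the required inequality.

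The only delicate step is the isometry claim:  the vector $\ld_N (a) \ov{\xi} = \sum_{g \in F, \, h \in S} a_g \xi_h \delta_{g h N}$ could a priori exhibit collisions $g h N = g' h' N$ with $g h \neq g' h'$ in~$G,$ but because both products lie in $F S$ and $q_N$ is injective there, no such collision occurs, so the coefficient lists of $\ld_N (a) \ov{\xi}$ and $\ld (a) \xi$ match term by term under the identification.  This is the main technical content of the proof; conceptually, amenability is used exactly to obtain faithfulness of~$\ld,$ while residual finiteness provides $N \in \CNG$ on which the relevant finite window $F S$ embeds injectively.  Corollary~\ref{C:PhiInj} then yields injectivity of~$\ph_G.$
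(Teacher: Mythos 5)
Your proof is correct and follows essentially the same route as the paper's: amenability gives faithfulness of the regular representation (so $C^*(G) = C^*_{\mathrm{r}}(G)$), and residual finiteness supplies $N \in \CNG$ on which the finite window $F S$ maps injectively, so that the quasi-regular representation $\ld_N$ preserves the norm of $\ld(a)\xi$; Corollary~\ref{C:PhiInj} then finishes the argument exactly as in the paper. The only cosmetic differences are that you prove the norm inequality on $\C[G]$ and extend by density where the paper approximates a general $a$ to within $\tfrac{1}{4}$ by an element of $\C[G]$, and that to make $\ov{\xi}$ a unit vector you should also arrange $q_N$ injective on $S$ (e.g., by taking $1 \in F$ so that $S \S F S$) --- a point the paper's own proof elides in the same way.
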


\begin{proof}
The following argument is taken from the proof of Proposition~3.3
of~\cite{Ws2},
and was pointed out to us by David Kerr.

Since $G$ is amenable,
we can identify $G$ with a subalgebra of $L (l^2 (G))$
via the regular representation.
Let $a \in C^* (G)$ be nonzero.
We show that $a$ is not in the ideal~$I$ of Corollary~\ref{C:KerPhi}.
This will give the result.

\Wolog\  $\| a \| = 1.$
Choose $b \in C^* (G)$ such that $\| b - a \| < \tfrac{1}{4}$
and $b$ is a finite linear combination of the standard unitaries~$u_g.$
That is, there are a finite set $S \subset G$ and numbers $\bt_g \in \C$
for $g \in S$ such that $b = \sum_{g \in S} \bt_g u_g.$
We have $\| b \| > \tfrac{3}{4},$
so there is $\xi \in l^2 (G)$ with finite support such that
$\| \xi \| = 1$ and $\| b \xi \| > \tfrac{1}{2}.$
Let $\dt_g \in l^2 (G)$ be the standard basis element
corresponding to $g \in G,$
and use similar notation for other groups.
Then there are a finite set $T \subset G$ and numbers $\af_g \in \C$
for $g \in T$ such that $\xi = \sum_{g \in T} \af_g \dt_g.$

We let
\[
S T = \big\{ g h \colon {\mbox{$g \in S$ and $h \in T$}} \big\}
\andeqn
T^{-1} = \big\{ g^{-1} \colon g \in T \big\}.
\]
Since $G$ is \rf,
there is $N \in \CNG$ such that the restriction to $S T$ of the
quotient map $G \to G / N$ is injective.
Let $v \colon G \to L (l^2 (G / N))$ be the composition of this
quotient map with the regular \rpn\  of $G / N.$
Let $\pi \colon C^* (G) \to L (l^2 (G / N))$ be the corresponding \hm.
Set $\et = \sum_{g \in T} \af_g \dt_{g N}.$
Then $\| \et \| = 1$ since the vectors $\dt_{g N}$ are orthonormal.
For $g \in S T,$ define
\[
\ld_g = \sum_{h \in S \cap g T^{-1}} \bt_h \af_{h^{-1} g}.
\]
Then
\[
b \xi = \sum_{g \in S T} \ld_g \dt_g
\andeqn
\pi (b) \et = \sum_{g \in S T} \ld_g \dt_{g N}.
\]
As $g$ runs through $S T,$
the elements $\dt_g$ and $\dt_{g N}$ form orthonormal systems
in $l^2 (G)$ and in $l^2 (G / N).$
Therefore
\[
\| \pi (b) \et \|^2 = \sum_{g \in S T} | \ld_g |^2 = \| b \xi \|^2.
\]
So
\[
\| \pi (a) \|
  > \| \pi (b) \| - \tfrac{1}{4}
  \geq \| \pi (b) \et \| - \tfrac{1}{4}
  = \| b \xi \| - \tfrac{1}{4}
  > \tfrac{1}{2} - \tfrac{1}{4}
  = \tfrac{1}{4}.
\]
Therefore $\pi (a) \neq 0.$
Since $\pi$ comes from a \rpn\  of~$G$ which factors through
the finite group $G / N,$
this shows that $a \not\in I.$
\end{proof}

We give an explicit example of a
nonabelian group covered by Theorem~\ref{T:RFAmen},
with a direct proof that the map
$\ph_G \colon
   C^* (G) \to M \big( C^* \big( {\ov{G}} \big) \big)$
of Proposition~\ref{P:MapToCstPfc} is injective.

\begin{exa}\label{E:DiscHsbg}
Let $G$ be the discrete Heisenberg group,
that is,
\[
G = \left\{ \left( \begin{matrix}
  1     &  n     &  l        \\
  0     &  1     &  m        \\
  0     &  0     &  1
\end{matrix} \right) \colon
l, m, n \in \Z \right\}.
\]
We can identify $G$ with the group generated by elements
$g,$ $h,$ and~$z,$
subject to the relations
\[
g h = z h g,
\,\,\,\,\,\,
z g = g z,
\andeqn
z h = h z.
\]
Here,
\[
g = \left( \begin{matrix}
  1     &  0     &  0        \\
  0     &  1     &  1        \\
  0     &  0     &  1
\end{matrix} \right),
\,\,\,\,\,\,
h = \left( \begin{matrix}
  1     &  1     &  0        \\
  0     &  1     &  0        \\
  0     &  0     &  1
\end{matrix} \right),
\andeqn
z = \left( \begin{matrix}
  1     &  0     &  1        \\
  0     &  1     &  0        \\
  0     &  0     &  1
\end{matrix} \right).
\]
(See Section VII.5 of~\cite{Dv},
where these elements are called $u,$ $v,$ and~$w.$)

By Theorem VII.5.1 of~\cite{Dv}, a complete set of representatives
of the unitary equivalence classes of irreducible \rpn s of $C^* (G)$
is given by the \rpn s $\pi^{(n, k, \af, \bt)}$ defined as follows.
We require that $n \in \N,$ that $k \in \{ 1, 2, \ldots, n \}$
and be relatively prime to~$n,$
and that $\af$ and $\bt$ both be in the arc
\[
J_n = \big\{ \exp (2 \pi i t) \colon 0 \leq t < \tfrac{1}{n} \big\}.
\]
Let $s_n \in M_n$ be the cyclic shift,
defined on the standard basis vectors $\dt_j \in \C^n$
by $s_n \dt_n = \dt_1$ and $s_n \dt_j = \dt_{j + 1}$
for $j = 1, 2, \ldots, n.$
Then $\pi^{(n, k, \af, \bt)}$ is determined by
\[
\pi^{(n, k, \af, \bt)} (g)
 = \af \cdot \diag \big( 1, \, e^{2 \pi i k / n},
          \, e^{2 \pi i \cdot 2 k / n},
          \, \ldots, e^{2 \pi i (n - 1) k / n} \big),
\]
\[
\pi^{(n, k, \af, \bt)} (h)
 = \bt s_n,
\andeqn
\pi^{(n, k, \af, \bt)} (z) = e^{2 \pi i k / n} \cdot 1.
\]
(It is not stated in~\cite{Dv},
but one easily checks that these representations actually all exist.)

Remark VII.5.2 of~\cite{Dv} implies that the intersection
of the kernels of these representations is~$\{ 0 \}.$
(This shows that $C^* (G)$ is \rfd.)
To prove injectivity of $\ph_G$ using
Corollary~\ref{C:KerPhi},
it therefore suffices to find, for each $n$ and~$k,$
a dense subset $R_{n, k}$
of the collection of allowed values of $\af$ and $\bt$ such that,
for $\af, \bt \in R_{n, k},$
the representation $w^{(n, k, \af, \bt)}$
of~$G$ has finite range.
One can check that the set
% $R_{n, k} = J_n \cap \{ \exp (2 \pi i t) \colon t \in \Q \}.$
$R_{n, k}$ of roots of unity in~$J_n$
meets this requirement.
\end{exa}

Let $G$ be a locally compact group.
Then we have two \pcas s on $C^* (G).$
One is the \pfpcas.
The other is the \pcas\  gotten
by applying the map
$\ph_G \colon
 C^* (G) \to M \big( C^* \big( {\ov{G}} \big) \big)$
of Proposition~\ref{P:MapToCstPfc},
and then using the \csn s of the \pcas\  %
on $M \big( C^* \big( {\ov{G}} \big) \big)$
from Definition~\ref{D:ProCStPfGp}.
We expect that these two \pcas s are usually inequivalent,
and in general do not even yield the same completion
in any reasonable sense,
even when $\ph_G$ is injective.
See Example~\ref{E:NotSameComp} below.

These pro-C*-algebra structures
are at least comparable.
This is conveniently expressed as follows.

\begin{prp}\label{P:CtInProC}
Let $G$ be a locally compact group.
Give $M \big( C^* \big( {\ov{G}} \big) \big)$ the \pcas\  of
Proposition~\ref{L:IsPCSStr}(\ref{L:IsPCSStr-1}).
Give $C^* (G)$ its \pfpcas\  (\Def{D:Pfpca}).
Then the map
$\ph_G \colon
 C^* (G) \to M \big( C^* \big( {\ov{G}} \big) \big)$
of Proposition~\ref{P:MapToCstPfc} is continuous
for the topologies induced by these \pcas s.
Moreover, $\ph_G$ extends to a \ct\  \hm\  %
\[
{\ov{\ph}}_G \colon
  {\ov{C^* (G)}} \to
   {\ov{M \big( C^* \big( {\ov{G}} \big) \big)}}.
\]
\end{prp}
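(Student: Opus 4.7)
My plan is to unpack what continuity means in each structure and then use the defining compatibility $\sm_N \circ \ph_G = \kp_N$ to reduce continuity to the fact that $C^*(G/N)$ is finite dimensional.

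First I would recall that a linear map between pro-C*-algebras is continuous in the projective-limit topologies exactly when, for every defining \csn~$q$ on the target, the composite $q \circ \ph_G$ is dominated by (equivalently, is itself one of) the defining \csn s on the source. The defining \csn s on $M\big(C^*\big(\ov{G}\big)\big)$ are $a \mapsto \|\sm_N(a)\|$ for $N \in \CNG$, by Definition~\ref{D:ProCStPfGp}. So I need to show that for every $N \in \CNG$ the function $a \mapsto \|\sm_N(\ph_G(a))\|$ on $C^*(G)$ is continuous for the \pfpcas.

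The key observation is the defining relation from Proposition~\ref{P:MapToCstPfc}: $\sm_N \circ \ph_G = \kp_N$. Hence the pulled-back \csn\ equals $a \mapsto \|\kp_N(a)\|$, where $\kp_N \colon C^*(G) \to C^*(G/N)$ is the map of \Lem{L:ToFiniteGp}. Because $G/N$ is a finite group, $C^*(G/N)$ is finite dimensional, so $\|\kp_N(\cdot)\|$ is a \csn\ on $C^*(G)$ whose quotient $C^*(G)/\ker(\|\kp_N(\cdot)\|)$ embeds as a subalgebra of $C^*(G/N)$ and is therefore finite dimensional. Thus $\|\kp_N(\cdot)\|$ is one of the \csn s defining the \pfpcas\ of $C^*(G)$ (\Def{D:Pfpca}), and in particular is continuous for the corresponding topology. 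This yields continuity of $\ph_G$.

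For the extension statement, I would invoke the standard fact that a continuous *-homomorphism between pro-C*-algebras (which are topological *-algebras whose topology is given by a directed family of C*-seminorms, and which are in particular uniform spaces under their translation-invariant uniformity) is uniformly continuous and therefore extends uniquely to a continuous *-homomorphism between the completions. Explicitly, for each $N$ the composite $\kp_N = \sm_N \circ \ph_G$ factors through the quotient $C^*(G)/\ker(\|\kp_N(\cdot)\|)$ and so extends continuously to ${\ov{C^*(G)}}$; these extensions are compatible with the inverse systems defining the two completions, so they assemble into a continuous *-\hm\ ${\ov{\ph}}_G \colon {\ov{C^*(G)}} \to {\ov{M\big(C^*\big(\ov{G}\big)\big)}}$ by the universal property of inverse limits.

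No real obstacle arises: the proof is essentially a bookkeeping exercise matching the defining \csn s of the target against those of the source, and the only substantive input is that the quotients $C^*(G/N)$ are finite dimensional, so that the pulled-back \csn s lie in the \pfpcas\ of $C^*(G).$
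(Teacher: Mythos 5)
Your proof is correct and follows essentially the same route as the paper: both arguments reduce continuity to showing that each defining seminorm on $M\big(C^*\big({\ov{G}}\big)\big)$ pulls back under $\ph_G$ to a C*~seminorm whose quotient of $C^*(G)$ embeds in the finite dimensional algebra $C^*(G/N)$, hence lies in the profinite pro-C*-algebra structure, and then obtain ${\ov{\ph}}_G$ by passing to completions. The only cosmetic difference is that you make the pulled-back seminorm explicit as $\|\kp_N(\cdot)\|$ via the relation $\sm_N \circ \ph_G = \kp_N$, whereas the paper argues directly with $p \circ \ph_G$.
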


\begin{proof}
The second statement follows from the first.

To prove the first statement,
it is enough to prove the following.
Let $p$ be any \csn\  in the \pcas\  %
on $M \big( C^* \big( {\ov{G}} \big) \big).$
Then the \csn\   $p \circ \ph_G$ is in the
\pcas\  on $C^* (G).$

So let $p$ be such a \csn.
By definition,
there is $N \in \CNGb,$
with corresponding \hm\  %
$\kp_N \colon
  C^* \big( {\ov{G}} \big) \to C^* \big( {\ov{G}} / N \big)$
and extension
${\wt{\kp}}_N \colon
  M \big( C^* \big( {\ov{G}} \big) \big)
   \to C^* \big( {\ov{G}} / N \big),$
such that $p (a) = \| {\wt{\kp}}_N (a) \|$
for all $a \in M \big( C^* \big( {\ov{G}} \big) \big).$
Then $C^* (G) / \ker ( p \circ \ph_G )$
is isomorphic to
a subalgebra of $C^* \big( {\ov{G}} / N \big),$
and is hence \fd.
Therefore $p \circ \ph_G$ is in the
\pcas\  on $C^* (G).$
\end{proof}

\begin{thm}\label{T-IdPhiBar}
Let $G$ be a locally compact group.
Let $R$ be a set consisting of one representative
$v \colon G \to L (H_v)$
of each unitary equivalence class of \fd\  irreducible
representations of~$G.$
Let
\[
F = \big\{ v \in R \colon {\mbox{$v$ has finite range}} \big\}.
\]
Then there is a commutative diagram
\[
\xymatrix{
{\ov{C^* (G)}} \ar[r]^{{\ov{\ph}}_G} \ar[d]_{\af}
  & {\ov{M \big( C^* \big( {\ov{G}} \big) \big)}} \ar[d]_{\bt}   \\
\prod_{v \in R} L (H_{v}) \ar[r]_{\rh} & \prod_{v \in F} L (H_{v}),
}
\]
in which the vertical maps $\af$ and $\bt$ are isomorphisms of
topological algebras,
the top horizontal map is as in Proposition~\ref{P:CtInProC},
and the bottom horizontal map $\rh$ is
the obvious projection map.
\end{thm}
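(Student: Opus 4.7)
The plan is to realize both vertical maps as special cases of the identification in Proposition~\ref{P-IdPfCstComp}, and then check commutativity on the dense subalgebra $C^* (G)$ before extending by continuity.

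First I would construct $\af.$ By Proposition~\ref{P-IdPfCstComp} applied to $C^* (G)$, we obtain an isomorphism of topological algebras from ${\ov{C^* (G)}}$ to $\prod_{\pi \in R_0} L (H_{\pi}),$ where $R_0$ is a set of representatives of the unitary equivalence classes of \fd\  irreducible representations of $C^* (G).$ By the correspondence in Remark~\ref{R:UGMG}, together with the fact that every irreducible representation of $C^* (G)$ on a nonzero Hilbert space is nondegenerate, $R_0$ is naturally in bijection with $R,$ sending $\pi$ to the associated representation $v \mapsto {\wt{\pi}} (u_g).$ This yields $\af$ with $\af ({\ov{a}}) = ( \pi_v (a) )_{v \in R}$ for $a \in C^* (G).$

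Next I would construct $\bt.$ Since ${\ov{G}}$ is profinite, hence compact, Peter--Weyl gives $C^* \big( {\ov{G}} \big) \cong \bigoplus_{s \in S} M_{n (s)}$ indexed by irreducible unitary representations of ${\ov{G}}.$ By Proposition~\ref{P:1-19A}, the completion of $M \big( C^* \big( {\ov{G}} \big) \big)$ in the \pcas\  induced on the multiplier algebra is $\prod_{s \in S} M_{n (s)}.$ One then needs to identify the \pcas\  of Definition~\ref{D:ProCStPfGp} on $M \big( C^* \big( {\ov{G}} \big) \big)$ (the one pulled back via $\varphi_G$ in Proposition~\ref{P:CtInProC}) with the one from Remark~\ref{R:1-16A}: by Proposition~\ref{L:IsPCSStr}(\ref{L:IsPCSStr-2}) the two corresponding \pcas s on $C^* \big( {\ov{G}} \big)$ are equivalent, hence the induced topologies on $M \big( C^* \big( {\ov{G}} \big) \big)$ agree. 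The key bijection is between $S$ and $F$: by Lemma~\ref{L:FinRange} every irreducible unitary representation of ${\ov{G}}$ factors through a finite quotient, so pulling back along $\gm_G$ and using Proposition~\ref{P:Factor} (or the fact that a \fd\  irreducible representation of $G$ factors through ${\ov{G}}$ \ifo\  it has finite range), one obtains a bijection $F \leftrightarrow S$ preserving the representation spaces. This yields $\bt$ with $\bt ({\ov{\varphi_G (a)}}) = \big( {\wt{\mu}}_v ( \varphi_G (a) ) \big)_{v \in F},$ where $\mu_v$ is the representation of $C^* \big( {\ov{G}} \big)$ corresponding to $v \in F$ viewed as a representation of ${\ov{G}}.$

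For commutativity, it suffices by density and continuity to check the diagram on elements $a \in C^* (G).$ The top composite $\bt \circ {\ov{\ph}}_G$ sends $a$ to $\big( {\wt{\mu}}_v (\ph_G (a)) \big)_{v \in F};$ by Proposition~\ref{P:Factor} this equals $(\pi_v (a))_{v \in F},$ which is precisely $\rh (\af (a)).$ The main obstacle will be the careful bookkeeping in the second step: making sure the \pcas\  on $M \big( C^* \big( {\ov{G}} \big) \big)$ arising in Proposition~\ref{P:CtInProC} really does give completion $\prod_{v \in F} L (H_v),$ and that the indexing bijection $F \leftrightarrow S$ is compatible with the evaluation maps used to define $\af$ and $\bt.$ Once this identification is in place, the commutativity on $C^* (G)$ is a direct consequence of Proposition~\ref{P:Factor} and Proposition~\ref{P:MapToCstPfc}, and both vertical arrows are isomorphisms of topological algebras by construction.
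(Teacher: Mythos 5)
Your proposal is correct and follows essentially the same route as the paper's proof: $\af$ from Proposition~\ref{P-IdPfCstComp}, $\bt$ from Proposition~\ref{L:IsPCSStr}(\ref{L:IsPCSStr-2}) together with Proposition~\ref{P:1-19A}, the identification of $F$ with the irreducible representations of ${\ov{G}}$ via Lemma~\ref{L:FinRange}, and commutativity checked on the dense image of $C^* (G).$ The only cosmetic difference is that the paper verifies commutativity directly from Proposition~\ref{P:MapToCstPfc}(\ref{P:MapToCstPfc-1}) rather than through Proposition~\ref{P:Factor}, which amounts to the same thing.
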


\begin{proof}
For $v \in R$ let $\pi^v \colon C^* (G) \to L (H_v)$
be the corresponding representation of $C^* (G).$
Let $S$ be the set of $v \in R$ which extend
continuously to representations of~${\ov{G}}.$

The map $\af$ is obtained from Proposition~\ref{P-IdPfCstComp},
using the set of representations $\{ \pi^v \colon v \in R \}.$
To obtain~$\bt,$
we begin by applying
Proposition~\ref{L:IsPCSStr}(\ref{L:IsPCSStr-2})
and parts (\ref{P:1-19A-2}) and~(\ref{P:1-19A-3})
of Proposition~\ref{P:1-19A}
with $A = C^* \big( {\ov{G}} \big).$
This identifies $M \big( C^* \big( {\ov{G}} \big) \big)$
and ${\ov{M \big( C^* \big( {\ov{G}} \big) \big)}}$
as
\[
M \big( C^* \big( {\ov{G}} \big) \big)
  = \left\{ b \in \prod_{v \in S} L (H_v) \colon
          \sup_{v \in S} \| b_{s} \| < \I \right\}
\andeqn
{\ov{M \big( C^* \big( {\ov{G}} \big) \big)}}
     = \prod_{v \in S} L (H_v),
\]
and we take $\bt$ to be
induced by the obvious inclusion.

We next identify~$\rh.$
Let $a \in C^* (G),$
and let ${\ov{a}}$ be its image in ${\ov{C^* (G)}}.$
Then $\af ({\ov{a}}) = ( \pi^v (a) )_{v \in R}$
by Proposition~\ref{P-IdPfCstComp}.
Using Proposition \ref{P:MapToCstPfc}(\ref{P:MapToCstPfc-1}),
we get
$(\bt \circ {\ov{\ph}}_G) ({\ov{a}}) = ( \pi^v (a) )_{v \in S}.$
It is now obvious that the diagram commutes.

It remains to identify $S$ with~$F.$
Since ${\ov{G}}$ is profinite,
this identification is immediate from Lemma~\ref{L:FinRange}.
\end{proof}

\begin{exa}\label{E:NotSameComp}
Take $G = \Z.$
Referring to Theorem~\ref{T-IdPhiBar},
we can identify $R$ with the dual group ${\wh{G}},$
that is, with the unit circle~$S^1.$
We thus get the identification $C^* (G) = C (S^1).$
We can identify $F$ as
\[
F = \big\{ \zt \in S^1 \colon
    {\mbox{there is $n \in \N$ such that $\zt^n = 1$}} \big\}.
\]
The topology from the \pfpcas\  on $C (S^1)$
is the topology of pointwise convergence on~$S^1,$
and, by Theorem~\ref{T-IdPhiBar}, the completion
is the set of all functions from $S^1$ to~$\C.$
(See Example~\ref{E:CXPfCmp}.)
The topology from the \pcas\  on $C (S^1) = C^* (G)$
obtained via~$\ph_G$
is the topology of pointwise convergence on~$F.$
It is still faithful,
but its completion is the set of all functions from $F$ to~$\C.$
\end{exa}

\begin{cor}\label{P:phBarSj}
Let $G$ be a locally compact group.
Then the map
${\ov{\ph}}_G \colon
  {\ov{C^* (G)}} \to
   {\ov{M \big( C^* \big( {\ov{G}} \big) \big)}}$
of Proposition~\ref{P:CtInProC} is surjective.
\end{cor}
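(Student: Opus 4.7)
The plan is to derive this as an immediate consequence of Theorem~\ref{T-IdPhiBar}. That theorem produces a commutative square
\[
\xymatrix{
{\ov{C^* (G)}} \ar[r]^{{\ov{\ph}}_G} \ar[d]_{\af}
  & {\ov{M \big( C^* \big( {\ov{G}} \big) \big)}} \ar[d]_{\bt}   \\
\prod_{v \in R} L (H_{v}) \ar[r]_{\rh} & \prod_{v \in F} L (H_{v}),
}
\]
in which $\af$ and $\bt$ are isomorphisms and $\rh$ is the coordinate projection associated to the inclusion $F \S R.$ Since surjectivity is preserved by composing with isomorphisms on either side, it is enough to observe that $\rh$ is surjective.

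The surjectivity of $\rh$ is a trivial property of projection maps between Cartesian products: given any element $(b_v)_{v \in F}$ of $\prod_{v \in F} L (H_{v}),$ extend it arbitrarily (for instance, by zero) on the coordinates indexed by $R \SM F$ to obtain a preimage in $\prod_{v \in R} L (H_{v}).$ I would write this extension explicitly just to make the argument self-contained.

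The only real content, therefore, is already encoded in Theorem~\ref{T-IdPhiBar}; there is no additional obstacle to overcome here. The short proof will simply invoke that theorem, remark that $\rh$ is visibly surjective as a coordinate projection, and conclude from commutativity of the diagram together with bijectivity of $\af$ and $\bt$ that ${\ov{\ph}}_G = \bt^{-1} \circ \rh \circ \af$ is surjective.
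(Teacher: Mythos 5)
Your proposal is correct and is essentially identical to the paper's own proof, which simply notes that the projection map $\rh$ of Theorem~\ref{T-IdPhiBar} is always surjective and concludes via the commutative diagram. No further comment is needed.
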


\begin{proof}
The map~$\rh$ of Theorem~\ref{T-IdPhiBar} is always surjective.
\end{proof}

\begin{cor}\label{C-phBarIso}
Let $G$ be a locally compact group.
Then the following are equivalent.
\begin{enumerate}
\item\label{C-phBarIso-1}
The map
${\ov{\ph}}_G \colon
  {\ov{C^* (G)}} \to
   {\ov{M \big( C^* \big( {\ov{G}} \big) \big)}}$
of Proposition~\ref{P:CtInProC} is bijective.
\item\label{C-phBarIso-2}
Every finite dimensional irreducible representation of~$G$
has finite range.
\item\label{C-phBarIso-3}
Every finite dimensional representation of~$G$
has finite range.
\end{enumerate}
Moreover, when these conditions are satisfied,
${\ov{\ph}}_G$ is a \hme.
\end{cor}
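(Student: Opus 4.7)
The plan is to read everything off the commutative diagram supplied by Theorem~\ref{T-IdPhiBar}. In that diagram the vertical maps $\af$ and $\bt$ are isomorphisms of topological algebras, so ${\ov{\ph}}_G$ is bijective (respectively a \hme) \ifo\ the bottom horizontal projection
\[
\rh \colon \prod_{v \in R} L (H_v) \to \prod_{v \in F} L (H_v)
\]
is bijective (respectively a \hme). But $\rh$ is a coordinate projection onto a subset~$F$ of the index set~$R,$ so it is bijective \ifo\ $F = R,$ and in that case it is literally the identity map, hence certainly a \hme. By the definition of~$F,$ the equality $F = R$ means that every \fd\ irreducible unitary \rpn\ of~$G$ has finite range, which is condition~(\ref{C-phBarIso-2}). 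This simultaneously gives (\ref{C-phBarIso-1}) $\Leftrightarrow$ (\ref{C-phBarIso-2}) and the moreover clause about ${\ov{\ph}}_G$ being a \hme.

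The remaining work is (\ref{C-phBarIso-2}) $\Leftrightarrow$ (\ref{C-phBarIso-3}). One direction is trivial since an irreducible \fd\ \rpn\ is a \fd\ \rpn. For (\ref{C-phBarIso-2}) $\Rightarrow$ (\ref{C-phBarIso-3}), I would use complete reducibility of \fd\ unitary representations: if $v$ is such a \rpn\ on a Hilbert space~$H,$ then the orthogonal complement of any invariant subspace is invariant, so by induction on $\dim (H)$ we get a decomposition $v = v_1 \oplus \cdots \oplus v_n$ into \fd\ irreducible unitary subrepresentations. If each $v_i$ has finite range, then $v (G)$ embeds into the finite group $v_1 (G) \times \cdots \times v_n (G)$ and is therefore finite.

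I do not anticipate any real obstacle; Theorem~\ref{T-IdPhiBar} has already done all the heavy lifting by translating ${\ov{\ph}}_G$ into the concrete projection~$\rh.$ The only small care needed is in the complete reducibility step, where one must remember that we are working with unitary representations (so that orthogonal complements of invariant subspaces are invariant) and that the decomposition terminates because $\dim (H) < \infty.$
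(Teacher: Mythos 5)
Your proposal is correct and follows essentially the same route as the paper: the equivalence of (\ref{C-phBarIso-1}) and (\ref{C-phBarIso-2}), together with the homeomorphism claim, is read off from the diagram of Theorem~\ref{T-IdPhiBar}, and the equivalence of (\ref{C-phBarIso-2}) and (\ref{C-phBarIso-3}) rests on the decomposition of a finite dimensional (unitary) representation into a finite direct sum of irreducibles. The paper states these two steps more tersely; you have merely spelled out the details (the coordinate-projection analysis of $\rh$ and the complete reducibility argument), both of which are sound.
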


\begin{proof}
The equivalence of (\ref{C-phBarIso-1}) and~(\ref{C-phBarIso-2})
is clear from Theorem~\ref{T-IdPhiBar}.
The equivalence of (\ref{C-phBarIso-2}) and~(\ref{C-phBarIso-3})
follows from
the fact that every finite dimensional representation of~$G$
is a finite direct sum of
finite dimensional irreducible representations.
\end{proof}

It is tempting to hope that
the map ${\ov{\ph}}_G$
of Proposition~\ref{P:CtInProC} is a \hme\
\ifo\  $G$ is profinite.
This is true when $G$ is compact,
but fails in general.

\begin{prp}\label{P:CompProf}
Let $G$ be a compact group.
Then the map
${\ov{\ph}}_G \colon
  {\ov{C^* (G)}} \to
   {\ov{M \big( C^* \big( {\ov{G}} \big) \big)}}$
of Proposition~\ref{P:CtInProC} is a \hme\
\ifo\  $G$ is profinite.
\end{prp}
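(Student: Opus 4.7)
The plan is to reduce this to Corollary~\ref{C-phBarIso}, according to which ${\ov{\ph}}_G$ is a \hme\  \ifo\  every \fd\  continuous unitary representation of~$G$ has finite range. So the task becomes proving, for a compact group~$G,$ that every \fd\  continuous unitary \rpn\  of~$G$ has finite range \ifo\  $G$ is profinite.

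The implication ``$\Leftarrow$'' is essentially immediate: if $G$ is profinite, then \Lem{L:FinRange} says that every \fd\  continuous unitary representation of $G$ factors through some finite quotient $G / N$ with $N \in \CNG,$ and hence has finite range.

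For the converse, the key tool is the Peter-Weyl theorem. Assume every \fd\  continuous unitary \rpn\  of~$G$ has finite range. By Peter-Weyl, the family $(v^{(\ld)})_{\ld \in \Ld}$ of (representatives of) \fd\  irreducible continuous unitary representations of~$G$ separates points of~$G.$ By hypothesis, each $v^{(\ld)} (G)$ is a finite group $F_{\ld}.$ Then the map
\[
\Phi \colon G \to \prod_{\ld \in \Ld} F_{\ld},
\qquad
g \mapsto \big( v^{(\ld)} (g) \big)_{\ld \in \Ld},
\]
is a continuous injective \hm\  into a profinite group (with the product of discrete topologies). Since $G$ is compact and the target is Hausdorff, $\Phi$ is a \hme\  onto its image, which is a closed subgroup of a profinite group, hence profinite. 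Therefore $G$ itself is profinite.

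The argument is short once Corollary~\ref{C-phBarIso} is in hand, so there is no real obstacle; the only thing to be slightly careful about is citing Peter-Weyl in the right form (separating family of \fd\  continuous irreducible unitary \rpn s) and noting that closed subgroups of profinite groups are profinite, which is standard (e.g.\  Proposition~2.2.1 of~\cite{RZ}). Alternatively, one could argue directly via the connected component~$G_0$ of the identity: if $G_0 \neq \{1\},$ pick $g \in G_0 \SM \{1\}$ and use Peter-Weyl to produce a \fd\  continuous \rpn\  $\pi$ with $\pi (g) \neq 1$; then $\pi (G_0)$ is a connected subgroup of $\pi (G),$ and if $\pi (G)$ were finite it would force $\pi (G_0) = \{1\},$ contradicting $\pi (g) \neq 1.$ Either route works.
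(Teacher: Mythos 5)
Your proof is correct, but the converse direction takes a genuinely different route from the paper's. After the common reduction via Corollary~\ref{C-phBarIso} and the common use of \Lem{L:FinRange} for the easy implication, the paper argues by contrapositive: for $G$ compact and not profinite, it sets $H = \ker (\gm_G),$ which is a nontrivial compact normal subgroup with no proper open subgroups, takes a nontrivial \fd\  irreducible \rpn\  $\sigma_0$ of~$H$ (necessarily with infinite range), induces it up to~$G,$ and uses Frobenius reciprocity to extract a \fd\  irreducible \rpn\  of~$G$ whose restriction to $H$ contains $\sigma_0,$ hence has infinite range. You instead argue the positive direction directly: Peter--Weyl gives a separating family of \fd\  irreducible \rpn s, each with finite image by hypothesis, so the diagonal map embeds $G$ homeomorphically as a closed subgroup of a product of finite groups, and closed subgroups of profinite groups are profinite. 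Your argument is sound (note that each finite image $F_{\ld} \subset U (H_{\ld})$ is automatically discrete in the subspace topology, so continuity into the product of discrete groups is not an issue, and the compact-to-Hausdorff argument for the \hme\  onto the image is standard). What your route buys is the avoidance of induced representations and Frobenius reciprocity entirely, at the cost of invoking the closure properties of profinite groups (Proposition~2.2.1 of~\cite{RZ}, or the characterization as compact totally disconnected groups); what the paper's route buys is an explicit witness, namely a concrete \fd\  \rpn\  with infinite range, which directly falsifies condition~(\ref{C-phBarIso-3}) of Corollary~\ref{C-phBarIso}. Your alternative sketch via the connected component is also valid and is in fact closer in spirit to the paper's argument, since $\ker (\gm_G)$ is precisely the identity component of a compact group.
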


\begin{proof}
By Corollary~\ref{C-phBarIso},
we must check that $G$ is profinite
\ifo\  every finite dimensional representation of~$G$
has finite range.

If $G$ is profinite,
then every finite dimensional representation of~$G$
has finite range by Lemma~\ref{L:FinRange}.

Suppose $G$ is compact and not profinite.
Let ${\ov{G}}$ be the profinite completion of
$G$ and let $\gm_G \colon G \to {\ov{G}}$
the canonical map, as in Notation~\ref{N:NBar}.
Then $\gm_G (G)$ is dense in ${\ov{G}}$ by construction,
and its range is compact,
so $\gm_G$ is surjective.
Let $H = \ker (\gm_G).$
Then $H$ is compact and nontrivial,
and hence has a
nontrivial finite dimensional irreducible representation~$\sigma_0.$
The range of $\sigma$ is infinite because
$H$ contains no nontrivial subgroups with finite index.
Induce $\sigma_0$ to a representation $\sigma$ of~$G.$
The restriction of $\sigma$ to $H$ contains $\sigma_0$
as a summand (by Frobenius reciprocity;
see for example Theorem 7.4.1 in~\cite{DE}).
Therefore, when $\sigma$ is written as a
direct sum of finite dimensional irreducible representations
of~$G,$
at least one of them, say $\pi,$ has the property
that $\sigma_0$ is a summand in $\pi |_H.$
This representation $\pi$ is a finite dimensional representation
of $G$ whose range is not finite.
\end{proof}

\begin{exa}\label{E-DirSumZ2}
Let $G$ be the abelian group
$G = \bigoplus_{n = 1}^{\I} \Z / 2 \Z,$
with the discrete topology.
Then $G$ is not profinite because it is not compact.
(In fact,
its profinite completion is $\prod_{n = 1}^{\I} \Z / 2 \Z.$)
Nevertheless, the map
${\ov{\ph}}_G \colon
  {\ov{C^* (G)}} \to
   {\ov{M \big( C^* \big( {\ov{G}} \big) \big)}}$
of Proposition~\ref{P:CtInProC} is a \hme.

We verify the criterion of
Corollary~\ref{C-phBarIso}(\ref{C-phBarIso-2}).
The irreducible representations of~$G$
are all one dimensional.
Since every element of $G$ has order $1$ or~$2,$
and $S^1$ has only two such elements,
the range of every irreducible representation of~$G$
has at most two elements.
\end{exa}

We give a second example,
an infinite countable amenable group for which
both ${\ov{C^* (G)}}$
and ${\ov{M \big( C^* \big( {\ov{G}} \big) \big)}}$
are just~$\C.$

\begin{exa}\label{E-InfAltGp}
Let $G$ be the group of finitely
supported even permutations of a countable infinite set,
with the discrete topology.
This group is countable and simple.
It is amenable,
since it is the increasing union of finite subgroups.
It is not profinite because it is not compact.
Nevertheless, we claim that the map
${\ov{\ph}}_G \colon
  {\ov{C^* (G)}} \to
   {\ov{M \big( C^* \big( {\ov{G}} \big) \big)}}$
of Proposition~\ref{P:CtInProC} is a \hme.

We verify the criterion of
Corollary~\ref{C-phBarIso}(\ref{C-phBarIso-3}),
by showing that $G$
has no nontrivial finite dimensional representations.
Since $G$ is simple,
it suffices to show that $G$
has no faithful finite dimensional representations.

Suppose $u \colon G \to L (\C^n)$ is a faithful representation.
Jordan's Theorem
(see page~91 of~\cite{Jr}, or Theorem~2.1 of~\cite{BG})
provides $l \in \N$ such that every finite subgroup
of $G$ contains an abelian normal subgroup of index
at most~$l.$
Since $G$ contains a copy of every finite group,
this is obviously impossible.
\end{exa}

\end{document}